\newtheorem*{theorem*}{Theorem}
\newtheorem{theorem}{Theorem}
\newtheorem{lemma}{Lemma}
\newtheorem{prop}{Proposition}
\theoremstyle{definition}
\newtheorem{definition}{Definition}
\theoremstyle{remark}
\numberwithin{equation}{section}
\newcommand{\Z}{\mathbb{Z}}
\newcommand{\ccirc}{\kern0.5ex\vcenter{\hbox{$\scriptstyle\circ$}}\kern0.5ex}
\def\deg{\operatorname{deg}}
\def\det{\operatorname{det}}
\newcommand{\s}{\sigma}
\def\Z{\mathbb Z}
\def\Q{\mathbb Q}
\def\Qc{\mathcal Q}
\def\Cc{\mathcal C}
\def\Z{\mathbb Z}
\def\Q{\mathbb Q}
\def\Qc{\mathcal Q}
\def\J{\mathcal J}
\def\a{\alpha}
\def\b{\beta}
\def\D{\Delta}
\def\s{\sigma}
\def\g{\gamma}
\newcommand{\defeq}{\mathrel{\mathop:}=}
\newcommand\sbullet[1][.5]{\mathbin{\vcenter{\hbox{\scalebox{#1}{$\bullet$}}}}}
\begin{document}

\title[On the arithmetic of polynomials over  a number field ]{On the arithmetic of polynomials over  a number field }
\author{W. Duke}
\address{UCLA Mathematics Department,
Box 951555, Los Angeles, CA 90095-1555} \email{wdduke@g.ucla.edu}
%
\begin{abstract}Counterparts of several classical results of number theory   are proven for the ring of polynomials with coefficients  in a number field.
A theorem of Milnor that determines the Witt ring of a function field is applied to prove an analogue of Gauss's principal genus theorem for binary quadratic forms with polynomial coefficients. 
 This is used to help understand when and why quadratic reciprocity fails in these
polynomial rings.
Another application  is a count  of  the number of cyclic subgroups whose  order is divisible by four in the primary decomposition of the   torsion subgroup of the Jacobian of certain hyperelliptic curves. 
Invariant theory is applied to prove an  analogue of a classical theorem of Fueter to give criteria for an elliptic curve with a polynomial discriminant and zero $j$-invariant to have no affine  points
over the associated function field.

\end{abstract}
\maketitle




\section{Introduction}

Let $k$ be a  field.
As a Euclidean domain with respect to polynomial degree, the ring $k[t]$ shares some number theoretic properties with the integers $\Z$, since it has a  division algorithm and unique factorization into primes (monic irreducibles). 
When $k$ is finite the basic arithmetic theory of $k[t]$ and extensions of its field of fractions $k(t)$ is very well developed.
When  $k$ is a number field,  this theory is also an active and attractive area of research (see e.g. \cite{Za0} and its references). 
The aim of this paper is to prove some new counterparts of several old and very well-known results of classical number theory when $\Z$ is replaced by  $k[t]$, where $k$ is a number field.

Given $p,q\in k[t]$ with $p(t)$ prime,  write $\big(\tfrac{q}{p}\big)=1$ if  $p\nmid q$ and $q(t)$ is a square modulo  $p(t)$. 
 If $k$ is finite, after Artin \cite{Ar} quadratic reciprocity holds between any  pair of distinct primes $p$ and $q$, meaning
that 
\begin{equation}\label{art}
\big(\tfrac{q}{p}\big)=1 \;\;\text{if and only if}\;\;\big(\tfrac{p^*}{q}\big)=1,
\end{equation}
where $p^*=(-1)^{|p|}p$ with $|p| \defeq \deg p$  (see also \cite{Ros}).
When $k$ is a number field, it is easy to show that (\ref{art}) does not always  hold. 
 The first problem we want to address  is to characterize those $q$ with odd degree for which
  $\big(\tfrac{q}{p}\big)=1$ implies $\big(\tfrac{p^*}{q}\big)=1$
{\it for all primes} $p\neq q$.
Our approach is guided by Gauss's second proof of classical quadratic reciprocity  in \cite[Art. 262]{Ga0} (or \cite{Ga}), which  is an application of the genus theory of binary quadratic forms.  That proof relies on the finiteness of the class group.
For any square-free $D\in k[t]$ of odd degree, 
let $\J_D$ (or $\J_D(k)$ if $k$ is not understood) be the Mordell-Weil group of $k$-rational divisor classes of degree zero on the hyperelliptic curve over $k$ determined by 
$
s^2=D(t).
$
Naturally, $\J_D$  plays the role of the class group.  After Mordell-Weil,  $\J_D$ is a finitely generated abelian group that  might,  or might not be,  finite.  
   \begin{theorem}\label{t0} Let $k$ be a number field
and $q(t)\in k[t]$ be a fixed prime with odd degree.   
 Then the following holds  if and only if $\J_q$ is finite:
 \begin{equation}\label{eqt1}\big(\tfrac{q}{p}\big)=1 \;\;\text{implies}\;\;\big(\tfrac{p^*}{q}\big)=1\;\;\text{for all primes $p(t)\neq q(t)$}.\end{equation}
\end{theorem}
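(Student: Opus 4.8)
The strategy is to reformulate (\ref{eqt1}) in terms of the finitely generated group $\J_q$ and then apply the analogue of Gauss's principal genus theorem. Write $d\defeq\deg q$, which is odd; put $L\defeq k[t]/(q)$, a number field of degree $d$ over $k$, and $\theta\defeq t\bmod q\in L$. Let $C_q$ be the smooth projective model of $s^2=q(t)$ and $\infty$ its unique point above $t=\infty$, which is $k$-rational because $d$ is odd; thus $\J_q=\mathrm{Pic}^0(C_q/k)$.

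First I would set up the dictionary between quadratic symbols and divisor classes. For a prime $p\neq q$ one has $p\nmid q$, and $\big(\tfrac{q}{p}\big)=1$ exactly when $q$ is a square modulo $p$, i.e.\ when the ideal $(p)$ splits in the ring $k[t][\sqrt{q}]$ (which is integrally closed away from $q$); in that case write $(p)=\mathfrak p\bar{\mathfrak p}$ with $\mathfrak p$ a closed point of $C_q$ of degree $\deg p$, and put $c_p\defeq[\mathfrak p-(\deg p)\,\infty]\in\J_q$. Computing divisors on $C_q$ gives $\mathrm{div}(s)=\mathfrak q-d\,\infty$ (with $\mathfrak q$ the closed point above $q$), $\mathrm{div}(p)=\mathfrak p+\bar{\mathfrak p}-2(\deg p)\,\infty$ for split $p$, and $\mathrm{div}(p)=\mathfrak P-2(\deg p)\,\infty$ for inert $p$; hence the degree-zero shift of $\mathfrak q$ vanishes, that of an inert prime vanishes, and $\bar{\mathfrak p}$ contributes $-c_p$. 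Since $\mathrm{Pic}(C_q)$ is generated by its closed points and $\mathrm{Pic}^0$ by the shifts $[P]-(\deg P)\,\infty$, the classes $\{c_p:\big(\tfrac{q}{p}\big)=1\}$ generate $\J_q$. I would also record, for later use, that $\J_q[2]=0$: over $\bar k$ one has $\J_q[2]\cong\mathbb{F}_2^{\,d}/\langle(1,\dots,1)\rangle$ with the Galois group of $k$ permuting the $d$ roots of $q$ transitively, and a short argument — in which it is crucial that $d$ is odd — shows this module has no nonzero invariant.

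The second ingredient is the genus character. By the classical $2$-descent map for hyperelliptic Jacobians — equivalently, by the second residue homomorphisms in Milnor's description of the Witt ring of $k(t)$, which is the route taken here — there is a homomorphism $\delta:\J_q/2\J_q\to L^\times/(L^\times)^2$ with $\delta(c_p)\equiv(-1)^{\deg p}\,p(\theta)=p^*(\theta)\pmod{(L^\times)^2}$, the sign arising because $\mathfrak p$ is the Galois orbit of the points $(a_i,b_i)$ over the roots $a_i$ of $p$, so $\delta(c_p)=\prod_i(a_i-\theta)=(-1)^{\deg p}\,p(\theta)$. Hence $\big(\tfrac{p^*}{q}\big)=1$, i.e.\ $p^*(\theta)\in(L^\times)^2$, if and only if $c_p\in\ker\delta$. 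The crucial point, which I expect to be the main obstacle, is the analogue of the principal genus theorem: $\delta$ is \emph{injective}, i.e.\ $\ker\delta=2\J_q$. The inclusion $2\J_q\subseteq\ker\delta$ is formal, whereas the reverse inclusion is the substantive assertion that a divisor class on which the generic character at $q$ vanishes already lies in the principal genus $2\J_q$; it is here that the principal genus theorem proved earlier (via Milnor's theorem) is invoked, and oddness of $d$ enters once more. One also needs the routine check that $\delta$, with its standard normalization at $\infty$, is given on the $c_p$ by the displayed evaluation formula.

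It remains to assemble the equivalence. By the dictionary, (\ref{eqt1}) asserts that $\delta(c_p)=1$ for every prime $p\neq q$ with $\big(\tfrac{q}{p}\big)=1$, hence, by injectivity of $\delta$, that $c_p\in2\J_q$ for every such $p$; since these $c_p$ generate $\J_q$, (\ref{eqt1}) is equivalent to $\J_q=2\J_q$. If this holds then $\J_q/2\J_q=0$, so the finitely generated group $\J_q$ has rank $0$ (and $\J_q[2]=0$), hence is finite. Conversely, if $\J_q$ is finite then $|\J_q|$ is odd because $\J_q[2]=0$, so multiplication by $2$ is an automorphism of $\J_q$ and $\J_q=2\J_q$. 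Therefore (\ref{eqt1}) holds if and only if $\J_q$ is finite.
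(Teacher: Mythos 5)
Your proposal is correct, but for the harder direction it takes a genuinely different route from the paper. For ``$\J_q$ finite $\Rightarrow$ (\ref{eqt1})'' the two arguments are essentially the same: no $2$-torsion (you get $\J_q[2]=0$ by the Galois-module computation on the Weierstrass points, using that $\deg q$ is odd; the paper gets it from the count of ambiguous classes, Proposition \ref{amb}), hence $\J_q=2\J_q$ when $\J_q$ is finite, hence the character kills every $c_p$. The real divergence is in ``(\ref{eqt1}) $\Rightarrow$ $\J_q$ finite'': the paper takes a class $C\notin\Cc_q^2$ (which exists by Mordell--Weil if $\J_q$ is infinite) and uses Hilbert's irreducibility theorem to make $C$ properly represent a prime, producing an explicit witness $p$ with $\big(\tfrac{q}{p}\big)=1$ but $\big(\tfrac{p^*}{q}\big)\neq1$; you instead prove that the classes $c_p=[\mathfrak p-(\deg p)\,\infty]$ attached to split primes generate $\J_q$ (inert and ramified closed points give trivial classes via $\mathrm{div}(p)$ and $\mathrm{div}(s)$), so (\ref{eqt1}) forces $\J_q=2\J_q$ and finite generation then gives finiteness. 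Your generation argument is correct and replaces Hilbert irreducibility by elementary divisor arithmetic, a genuine simplification; the trade-off is that the paper's route directly produces infinitely many explicit prime witnesses when $\J_q$ is infinite (see its remark after the proof), which your contrapositive does not. Both proofs rest on the same hard input: your claim $\ker\delta=2\J_q$ is exactly the principal genus theorem (Proposition \ref{prg}) transported through the isomorphism $\Cc_q\cong\J_q$ of Proposition \ref{the1}, under which your $\delta$ is the paper's $\chi_q$ and $\delta(c_p)=p^*(\theta)$ matches $\chi_q\big((p^*,b,c)\big)=(p^*,q)_q$ (monicity of primes is what makes the sign come out right); the ``routine check'' you defer is precisely this compatibility together with $2\J_q\subseteq\ker\delta$, and it is indeed routine given the explicit divisor map (\ref{div}). (For what it is worth, in the odd-degree case the injectivity of $\delta$ also follows from standard $2$-descent cohomology, so your route could be made independent of the Witt-ring argument, but invoking Proposition \ref{prg} as you do is perfectly legitimate since it precedes Theorem \ref{t0} in the paper.)
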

In words, whether or not $\J_q$  is finite determines whether or not  (one way)  quadratic reciprocity holds for a prime $q\in k[t]$ of odd degree, when  $k$ is a number field.

It can happen that   (\ref{eqt1}) holds for a prime $q(t)\in k[t]$ of odd degree over $k$ and all primes $p\in k[t]$ with $p\neq q$, and where $q$ is still irreducible over a finite extension $k'/k$
but  (1.2) fails for some $p\in k'[t]$. In this case by Theorem \ref{t0}  the rank of $\J_q(k)$  is zero while that of $\J_q(k')$ is at least one.

\subsubsection*{Example}
The polynomial  $q(t)=t^3+16\in \Q[t]$ is a prime of odd degree for which  (\ref{eqt1}) holds for any prime $p\neq q$, since  $s^2=q(t)$ has rank 0 as an elliptic curve over $\Q$. 
Now  $q$ is still irreducible over $k'=\Q(\sqrt{5})$ but  (\ref{eqt1}) fails for $p(t)=t-2(1+\sqrt{5})$. 
This follows since  $q(2+2\sqrt{5})\in k'^2$
so  $(\tfrac{q}{p})=1$, but it can be shown that
 $2+2\sqrt{5}- \a \not\in K^{\sbullet 2}$, where $\a^3+16=0$ and $K=\Q(\sqrt{5},\a)=k'(\a),$ so $(\tfrac{-p}{q})\neq 1$. 
In fact, the curve $s^2=q(t)$ has rank 1 over $k'$ with infinite part generated by $(2+2\sqrt{5},8+4\sqrt{5})$.

\bigskip

For a fundamental discriminant $D\in \Z$, the class  group of binary quadratic forms  over $\Z$ with discriminant $D$ can be identified with the 
(narrow)  divisor class group $H$  of the quadratic field $\Q(\sqrt{D})$.  
 For any finitely generated  abelian group $G$, let  $e_n=e_n(G)$ be the number of subgroups of order $2^m$, where $m\geq n$,  in a representation of the torsion part of $G$ as a direct product of cyclic subgroups of prime power order.  After Gauss,  $2^{e_1(H)}$ is the number of (unordered) factorizations $D=D_1D_2$ into fundamental discriminants $D_1,D_2$.  An elegant supplement was published  in \cite{RR}.  It may be viewed as a consequence of the principal genus theorem of Gauss. 
\begin{theorem*}[Redei-Reichardt]
 There are  $2^{e_2(H)}$  factorizations $D=D_1D_2$ into fundamental discriminants 
 that satisfy $(\tfrac{D_1}{p})=1$ for each $p|D_2$ and $(\tfrac{D_2}{p})=1$ for each $p|D_1$. 
\end{theorem*}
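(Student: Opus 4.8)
The plan is to run Gauss's genus theory for $\Q(\sqrt D)$ and to refine his count $2^{e_1(H)}$ of all unordered factorizations of $D$ to the subcount $2^{e_2(H)}$, by keeping track of where the classes of the ramified primes sit inside $H$. Write $D=D_1^\ast\cdots D_t^\ast$ for the factorization of $D$ into coprime prime discriminants, so that $p_1,\dots,p_t$ are the ramified rational primes; let $\mathfrak p_i$ be the prime of $\Q(\sqrt D)$ above $p_i$, so $\mathfrak p_i^2=(p_i)$ and $[\mathfrak p_i]\in H[2]$. The unordered factorizations $D=D_1D_2$ into fundamental discriminants are precisely the $D_1=\prod_{i\in S}D_i^\ast$ for $S\subseteq\{1,\dots,t\}$, taken modulo $S\leftrightarrow S^c$, and to each of these genus theory attaches a genus character $\chi_{D_1}$ of $H$. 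From Gauss's theory I would take three facts: (i) $\{D_1,D_2\}\mapsto\chi_{D_1}$ is a bijection from unordered factorizations onto the group of genus characters, so that the number of factorizations is $|H/H^2|=2^{e_1(H)}=2^{t-1}$, which is where Gauss's principal genus theorem is used; (ii) $\chi_{D_1}\chi_{D_2}$ is the trivial character of $H$; and (iii) $\chi_{D_j}([\mathfrak p_i])=\big(\tfrac{D_j}{p_i}\big)$ whenever $p_i\nmid D_j$.

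Next I would reinterpret the Redei-Reichardt condition. For the factorization attached to $\{S,S^c\}$, the requirement $\big(\tfrac{D_1}{p}\big)=1$ for all $p\mid D_2$ reads, by (iii), $\chi_{D_1}([\mathfrak p_i])=1$ for all $i\in S^c$, while $\big(\tfrac{D_2}{p}\big)=1$ for all $p\mid D_1$ reads $\chi_{D_2}([\mathfrak p_i])=1$ for all $i\in S$; since $\chi_{D_2}=\chi_{D_1}$ on $H$ by (ii), the two together say exactly that $\chi_{D_1}$ kills every class $[\mathfrak p_i]$ — equivalently, that $\chi_{D_1}$ is trivial on the subgroup $A:=\langle[\mathfrak p_1],\dots,[\mathfrak p_t]\rangle\leq H$. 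Hence the Redei-Reichardt factorizations are precisely those whose associated genus character is trivial on $A$.

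I would then invoke the second standard output of genus theory, $A=H[2]$: the $2$-torsion of the narrow class group is generated by the classes of the ramified primes. Granting this, the Redei-Reichardt factorizations correspond bijectively, through (i), to the characters of $H/H^2$ that are trivial on the image $\overline A$ of $H[2]$ in $H/H^2$. Now $\overline A\cong H[2]/(H[2]\cap H^2)$ has order $2^{\,e_1(H)-e_2(H)}$, since $|H[2]|=2^{e_1(H)}$ and $|H[2]\cap H^2|=2^{e_2(H)}$ are both read off the primary decomposition of $H$; hence the number of such characters is $|H/H^2|/|\overline A|=2^{e_1(H)}/2^{\,e_1(H)-e_2(H)}=2^{e_2(H)}$, the claimed count. (The trivial factorization $D=1\cdot D$, with trivial genus character, is vacuously Redei-Reichardt and is included, consistently with the case $e_2(H)=0$.)

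The real content — and the step I expect to be the main obstacle — is the two genus-theoretic inputs used as black boxes above: that $|H/H^2|=2^{t-1}$, where Gauss's principal genus theorem enters, since a priori the genus characters pin down $H/H^2$ only up to an inclusion; and that $H[2]$ is generated by the ramified prime classes, which is the ambiguous class number formula for the narrow class group. Over $\Q$ both are classical, and everything past them is bookkeeping with the $2$-torsion of a finite abelian group. It is worth arranging the argument to isolate exactly these two facts, because in the polynomial setting of this paper they are precisely what has to be re-established — the first of them via Milnor's computation of the Witt ring of the relevant function field.
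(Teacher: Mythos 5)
Your proof is correct, but note that the paper does not actually prove this classical statement: it quotes it from R\'edei--Reichardt \cite{RR} (pointing to \cite{Ven} for a genus-theoretic proof) and instead proves the $k[t]$-analogue, Theorem~\ref{t1}, so the fair comparison is with that argument. The two routes read the (symmetric) condition $(\tfrac{D_1}{p})=1$ for $p\mid D_2$, $(\tfrac{D_2}{p})=1$ for $p\mid D_1$ in dual ways. The paper reads it on the \emph{class side}: these are the values of \emph{all} genus characters on the single ambiguous class attached to the factorization, so the good factorizations are exactly the elements of $H[2]\cap H^2$ (ambiguous classes in the principal genus, using the principal genus theorem, Proposition~\ref{prg}, and the classification of ambiguous classes, Proposition~\ref{amb}); the count $\lvert H[2]\cap H^2\rvert=2^{e_2}$ is then obtained by the little argument enumerating solutions of $C^4=1$. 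You read it on the \emph{character side}: it says the single genus character $\chi_{D_1}=\chi_{D_2}$ kills all ramified-prime classes, i.e.\ is trivial on $H[2]$, and you count characters of $H/H^2$ trivial on the image of $H[2]$. Both computations give $2^{e_2}$, both rest on the same two inputs (the principal genus theorem and the fact that the ramified primes generate $H[2]$, equivalently that ambiguous classes come from ambiguous forms), and the agreement of the two readings is exactly the symmetry of the R\'edei pairing. The paper's class-side version is the one that transfers verbatim to $k[t]$ --- genus characters are only ever evaluated on ambiguous forms, where their values are read off the coefficients, which is also why the analogue acquires the sign $(\tfrac{-D_2}{p})$ --- while your character-side version is closer to R\'edei's original matrix formulation and isolates the two black boxes more cleanly; as you correctly anticipate, the principal genus theorem is the input the paper must re-establish over $k[t]$ via Milnor's computation of $W(k(t))$.
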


I will derive a parallel result from a $k[t]$-version of the principal genus theorem.

\smallskip
Again, let  $\J=\J_D$ be the Mordell-Weil group of the hyperelliptic curve over a number field $k$ given by 
 $
 s^2=D(t).
 $
As is well-known (and also follows from Propositions \ref{amb} and \ref{the1} below), for  $D\in k[t]$  monic and  square-free  of odd degree,   there are $2^{e_1(\J)}$  factorizations $D=D_1D_2$ into co-prime monic polynomials, where $D_2$ has odd degree.  Our second theorem determines $e_2(\J)$.
 \begin{theorem}\label{t1} Let $D\in k[t]$ be  square-free and monic of odd degree. Then there are $2^{e_2(\J)}$ factorizations $D=D_1D_2$ into monic polynomials each  in $k[t]$ with $D_2$ of odd degree,  where  $(\tfrac{D_1}{p})=1$ for each prime $p|D_2$ and  $(\tfrac{-D_2}{p})=1$ for each prime $p|D_1.$
\end{theorem}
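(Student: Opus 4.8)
The plan is to transplant the classical argument of Gauss and R\'edei to the hyperelliptic setting, the two essential inputs being facts that here replace the finiteness and genus theory of the class group. First, by Propositions~\ref{amb} and~\ref{the1} the factorizations $D=D_1D_2$ into monic polynomials with $D_2$ of odd degree are in bijection with the group $\J[2]$ of ambiguous classes, the trivial factorization $D=1\cdot D$ corresponding to $0\in\J$; in particular $|\J[2]|=2^{e_1(\J)}$. Second, the $k[t]$-analogue of Gauss's principal genus theorem (the one obtained from Milnor's computation of the Witt ring $W\bigl(k(t)\bigr)$) identifies the \emph{principal genus} --- the common kernel of the genus characters attached to the primes $p\mid D$ --- with the subgroup $2\J$ of squares. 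Granting these, I claim that a factorization is one of those counted in Theorem~\ref{t1} exactly when its ambiguous class lies in $2\J$, so the desired count is $|\J[2]\cap 2\J|$. This last number is computed by elementary group theory: writing the $2$-part of the torsion of $\J$ as $\bigoplus_i\Z/2^{a_i}$, one has $\J[2]\cap 2\J\cong\bigoplus_i\bigl((\Z/2^{a_i})[2]\cap 2(\Z/2^{a_i})\bigr)$, and the order-$2$ element $2^{a_i-1}$ of $\Z/2^{a_i}$ lies in $2(\Z/2^{a_i})$ iff $a_i\ge 2$; hence $|\J[2]\cap 2\J|=2^{\#\{i\,:\,a_i\ge 2\}}=2^{e_2(\J)}$.

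So the real content is to evaluate the genus characters on an ambiguous class. For a prime $p\mid D$ let $\chi_p\colon\J\to\{\pm1\}$ be the genus character given on a class represented by an ideal $\mathfrak a$ of $k(t)[s]/(s^2-D)$ prime to $p$ by $\chi_p(\mathfrak a)=\bigl(\tfrac{N\mathfrak a}{p}\bigr)$ --- equivalently, by a local Hilbert symbol $(\,\cdot\,,D)_p$. This is well defined on classes because the norm of a principal ideal is a value of the norm form, and these characters cut out the genus. Let $C=C_{D_1,D_2}$ be the ambiguous class attached to $D=D_1D_2$; choosing for $C$ the representative whose norm is supported away from a given prime $p$, one should get $\chi_p(C)=\bigl(\tfrac{D_i^{*}}{p}\bigr)$, where $D_i\in\{D_1,D_2\}$ is the factor coprime to $p$ and $D_i^{*}=(-1)^{\deg D_i}D_i$ in the reciprocity normalization of \eqref{art}. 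Since $\deg D_1$ is even and $\deg D_2$ is odd, this reads $\chi_p(C)=\bigl(\tfrac{D_1}{p}\bigr)$ for $p\mid D_2$ and $\chi_p(C)=\bigl(\tfrac{-D_2}{p}\bigr)$ for $p\mid D_1$. Consequently $C$ lies in every $\ker\chi_p$, i.e.\ in $2\J$ by the principal genus theorem, precisely when $\bigl(\tfrac{D_1}{p}\bigr)=1$ for all $p\mid D_2$ and $\bigl(\tfrac{-D_2}{p}\bigr)=1$ for all $p\mid D_1$, which are exactly the conditions in the statement.

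The main obstacle is this evaluation of $\chi_p(C)$, which amounts to a reciprocity computation in $k[t]$ over a number field $k$: unlike the finite-field case \eqref{art}, the naive law is corrupted by the place at infinity of $k(t)$ and by the archimedean and ramified places of $k$ --- the very failure measured by Theorem~\ref{t0}. I would organize the calculation through the Hilbert-symbol product formula for $k(t)$ (equivalently, through the residue sequence underlying Milnor's description of $W(k(t))$, already in hand from the principal genus theorem): the global identity $\prod_v (D_1,D_2)_v=1$, combined with $D_1D_2=D$ being monic, square-free and of odd degree, should force the contributions at $\infty$ and at the places of $k$ to collapse into exactly the single $-1$-twist on the odd-degree factor and nothing else, leaving the two stated local conditions. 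Verifying that these extraneous local symbols really cancel in pairs --- or contribute precisely that twist --- under the hypotheses is the delicate point; once it is settled, the bijection with $\J[2]$ and the principal genus theorem close the argument and yield the count $2^{e_2(\J)}$.
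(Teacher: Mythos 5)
Your proposal follows essentially the same route as the paper: factorizations correspond to ambiguous classes via Propositions~\ref{amb} and~\ref{the1}, the principal genus theorem (Proposition~\ref{prg}) identifies the principal genus with the squares, and the count reduces to $\#\bigl(\J[2]\cap 2\J\bigr)=2^{e_2(\J)}$ (the paper obtains this by counting classes killed by $4$, which is equivalent to your direct cyclic-factor computation). The one step you flag as the ``delicate point'' --- evaluating $\chi_p$ on an ambiguous class --- needs no reciprocity or product-formula analysis in the paper's setup: by the definition (\ref{gc}), $\chi_p(Q)=(n,D)_p$ for any $n$ prime to $p$ that is properly represented by $Q$, and the ambiguous form $(D_1,0,-D_2)$ of discriminant $D=D_1D_2$ properly represents $D_1$ at $(1,0)$ and $-D_2$ at $(0,1)$, so $\chi_p=(\tfrac{D_1}{p})$ for $p\mid D_2$ and $\chi_p=(\tfrac{-D_2}{p})$ for $p\mid D_1$ immediately; the minus sign on $D_2$ is simply the third coefficient of the form, not a correction coming from the place at infinity.
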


\subsubsection*{Example}
For any number field $k$ 
suppose that $f\in k[t]$ is monic of degree $\geq 1$ and  $f(0)=\a\neq 0$ and that $D_1(t)\defeq f^2(t)+t$ is square-free. Set $D_2(t)=t$ and
let \[D(t)=D_1D_2=(f^2(t)+t)t.\] Then  $D$ is monic, square-free and has odd degree.
Clearly $(\tfrac{D_1}{t})=1$ since $D_1(0)=a^2\in k^{\sbullet 2}.$  Also, $(\tfrac{-t}{p})=1$ for any prime $p|D_1$ since
\[
-t\equiv f^2(t) \pmod{p(t)}.
\]
Hence by Theorem \ref{t1} the Mordell-Weil group of the genus $|f|$ curve $s^2=t(f^2(t)+t)$ over $k$ has points of order 4.

\bigskip
The non-existence of classes of order 3 in the class group $H$ of $\Q(\sqrt{D})$ has a consequence for the rational points on the elliptic curve 
$
y^2=x^3+D,
$
at least for some $D$. The next classical result follows from a  more general statement proven in \cite{Fue}.
\begin{theorem*}[Fueter]\label{f}
 Suppose that $D\neq 1$ is  square-free. Then the equation $y^2=x^3+D$ either has  an infinity of  solutions $x,y\in \Q$  or it has none. It has none if
 $D<0$ satisfies $D\equiv 2\pmod{9}$,   $D\not \equiv 1\pmod{4}$ and $H$ has no elements of order 3.
\end{theorem*}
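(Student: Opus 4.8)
The plan is to combine the Mordell--Weil theorem with the classical Fermat-style descent in the ring of integers of $K=\Q(\sqrt D)$, in the form systematized by Mordell for the curves $E_D\colon y^2=x^3+D$. The dichotomy ``infinitely many or none'' is the easy half: by Mordell--Weil $E_D(\Q)$ is finitely generated, hence infinite precisely when it has positive rank, and since the unique point at infinity is the only non-affine point, ``infinitely many affine solutions'' means positive rank while ``none'' means $E_D(\Q)=0$. So the dichotomy is equivalent to the triviality of the torsion subgroup of $E_D(\Q)$, which is the classical torsion computation for Mordell curves: a rational point $(x,0)$ would make $-D$ a cube and a point $(0,y)$ would make $D$ a cube, so for square-free $D$ the torsion is trivial except in the single case $D=-1$ (where $E_D(\Q)=\Z/2$), which I would record as a harmless exception.

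For the sufficiency of the stated conditions I would assume a rational point exists and force a contradiction. Write it in lowest terms as $x=a/c^2$, $y=b/c^3$ with $c\ge1$ and $\gc(a,c)=\gc(b,c)=1$, so $b^2=a^3+Dc^6$. Since $D\not\equiv 1\pmod 4$ we have $\Oc_K=\Z[\sqrt D]$, and there $(b+c^3\sqrt D)(b-c^3\sqrt D)=(a)^3$. The first key step is that the ideals $(b\pm c^3\sqrt D)$ are coprime: a common prime ideal would lie over a rational prime $p\mid 2D$ that also divides $a$, and a short $p$-adic check rules this out, using ``$D$ square-free'' for odd $p$ and ``$D\not\equiv 1\pmod 4$'' for $p=2$. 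Hence $(b+c^3\sqrt D)=\mathfrak a^3$ for an ideal $\mathfrak a$, so $[\mathfrak a]^3=1$ in $H$; as $H$ has no $3$-torsion, $\mathfrak a=(\beta)$ is principal. Because $D<0$, and $D\equiv 2\pmod 9$ excludes $D=-1,-3$, we have $\Oc_K^\times=\{\pm1\}$; as $\pm1$ are cubes we may absorb the unit and write $b+c^3\sqrt D=(g+h\sqrt D)^3$ with $g,h\in\Z$.

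Comparing coefficients of $\sqrt D$ gives
\[
c^3=h\bigl(3g^2+Dh^2\bigr),\qquad b=g\bigl(g^2+3Dh^2\bigr),
\]
and the remaining task is to contradict the first identity using $D\equiv 2\pmod 9$. One checks $\gc(g,h)=1$ (from $\gc(a,c)=1$) and $\gc(g,c)=1$ (from $\gc(b,c)=1$), so that $\gc\bigl(h,3g^2+Dh^2\bigr)\mid 3$. If $3\nmid h$ the two factors of $c^3$ are coprime, hence each is a cube, $h=r^3$ and $3g^2+Dh^2=s^3$ with $3\nmid r$; then $s^3\equiv 3g^2+Dr^6\equiv 3g^2+D\pmod 9$, which is impossible because $3g^2+D\in\{2,5\}\pmod 9$ while $s^3\in\{0,1,8\}\pmod 9$. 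The remaining case $3\mid h$ --- equivalently $3\mid c$, i.e.\ a non-integral rational point --- is where I expect the main obstacle. A valuation count forces $e\defeq v_3(h)\equiv 2\pmod 3$, so $e\ge2$, and pulling out the cube part rewrites the identity as $g^2+3^{2e-1}Dr^6=s^3$ with $3\nmid grs$; here the naive congruence game stalls, since the residues modulo $27$ are compatible. After dividing by a suitable sixth power this equation presents a rational point on the $3$-isogenous curve $y^2=x^3-27D$ (the quadratic twist of $E_D$ by $-3$), so one is driven into a genuine descent exploiting the complex multiplication of $E_D$ by $\Z[\z_3]$; the hypothesis $D\equiv 2\pmod 9$ is exactly what keeps this descent from producing new points. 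Running it to the end --- equivalently, reading the whole argument as the vanishing of the relevant Selmer group --- is what the more general theorem of \cite{Fue} supplies, and it finishes the proof.
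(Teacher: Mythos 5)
You should first be aware that the paper does not prove this statement at all: it is quoted as a classical theorem, with the proof attributed to Fueter \cite{Fue} and to Mordell's later simplifications (\cite{Mor1}, \cite[Chap.~26]{Mor}); in the paper it serves only as the model for Theorem \ref{t2}. So the comparison is with the classical argument rather than with anything in the text. Your skeleton is the correct classical one, and each hypothesis is used in the right place: $D\not\equiv 1\pmod 4$ to get $\mathcal{O}_K=\Z[\sqrt D]$ and the coprimality of the ideals $(b\pm c^3\sqrt D)$; the absence of $3$-torsion in $H$ to pass from $\mathfrak{a}^3$ principal to $\mathfrak{a}$ principal; $D<0$ (with $D\neq -1,-3$, which $D\equiv 2\pmod 9$ guarantees) to reduce the unit to $\pm 1$; and $D\equiv 2\pmod 9$ for the final congruence. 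Your remark that $D=-1$ is a genuine exception to the dichotomy as literally stated (exactly one affine point, coming from the $2$-torsion of a rank-zero curve) is correct and worth recording.

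The proposal is nevertheless not a complete proof, and you say so yourself: the case $3\mid h$, equivalently $3\mid c$, is precisely the hard half of Fueter's theorem, and your argument stops exactly there. Your computation that the congruence modulo $9$ (and $27$) stalls on $g^2+3^{2e-1}Dr^6=s^3$ is accurate, and what remains is not a routine continuation. One must either run a second descent attached to the $3$-isogeny onto $y^2=x^3-27D$ --- which is exactly the structure Theorem \ref{t2} mimics over $k(t)$, and whose second curve cannot in general be ignored, as the paper's Example ii) below that theorem shows --- or work, as Fueter did, in $\Q(\sqrt{D},\sqrt{-3})$. In either form the argument needs control of the $3$-part of the class group of $\Q(\sqrt{-3D})$ as well; the reason the theorem can be stated with a hypothesis on $H=\mathrm{Cl}(\Q(\sqrt D))$ alone is the reflection principle of Scholz \cite{Sch}, which the paper cites for precisely this purpose. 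Deferring all of this to ``the more general theorem of \cite{Fue}'' means your write-up establishes only the subcase $3\nmid c$ (in particular the integral-point case $c=1$), not the stated theorem; supplying the second descent and the reflection step is the substantive content that is missing.
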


I will give an analogue of this result as well for the rational points on the elliptic curve over $k(t)$ determined by  $y^2=x^3+D(t)$, where $D\in k[t]$ is square-free and non-constant.
This equation can also be considered to give an elliptic surface over $k$ where, in general, it has many more points.
For $D\in k[t]$ let 
\begin{equation}\label{wide}
D'(t)=-27D(-3t).
\end{equation}

\begin{theorem}\label{t2} Suppose that $D\in k[t]$ is square-free and non-constant, where $k$ is a number field.  Then either 
the equation \[y^2=x^3+D(t)\]  has an infinity of solutions $x,y\in k(t)$ or it has none. It has none 
if $D$ is monic of  odd degree and neither  $\mathcal{J}_D$ nor $\mathcal{J}_{D'}$  has points of order 3.
\end{theorem}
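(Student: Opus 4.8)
The plan is to imitate Fueter's descent argument in the function-field setting, combining the classical ``3-descent'' on the curve $y^2 = x^3 + D(t)$ with the arithmetic of $\mathcal{J}_D$. The dichotomy ``infinitely many or none'' should come from the fact that the curve $E_D\colon y^2 = x^3 + D(t)$ over $k(t)$, having $j$-invariant $0$, is a twist of a fixed curve; its $k(t)$-rational points form a finitely generated group by the Mordell--Weil theorem for function fields (Lang--N\'eron), and one shows the rank is either $0$ or positive with no torsion contributing an affine point — precisely as in Fueter, an affine point forces, via the group law and the $j=0$ CM by $\zeta_3$, an infinite orbit unless it is itself a non-trivial torsion point, which one rules out by a direct check on $y^2 = x^3 + D(t)$ with $D$ non-constant. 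So the first step is: reduce to showing that \emph{the existence of one affine point implies $\mathrm{rank}\, E_D(k(t)) \geq 1$}, hence infinitely many.

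The second and main step is the criterion for \emph{non-existence}. Here I would set up the $3$-isogeny descent: $E_D$ admits a $3$-isogeny (over $k(\zeta_3)(t)$, coming from the CM structure), and an affine point of $E_D(k(t))$ produces a nontrivial class in a Selmer-type group that is controlled by $k(t)$-solutions of a norm equation $N_{K(t)/k(t)}(\xi) = $ (something involving $D$), where $K = k(\zeta_3)$. Following Fueter, a rational point $y^2 = x^3 + D(t)$ yields a factorization of $D(t)$ linked to how $y - \sqrt{-3}\,(\text{cube root data})$ splits — concretely, writing $x^3 = y^2 - D = (y-\sqrt{D})(y+\sqrt{D})$ is the wrong factorization; the right one uses the cubic $x^3 + D = \prod (x - \rho_i)$ where $\rho_i^3 = -D$, i.e. one works in $k(t)[u]/(u^3 + D)$. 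An affine point then forces $u - x$ (suitably normalized) to be a cube in this cubic algebra up to units and the contribution of $3$, and \emph{chasing which prime divisors of $D$ are involved} gives an element of order dividing $3$ in a group built from the $D_1 D_2$ factorizations — which by the principal-genus machinery of the earlier propositions is exactly $\mathcal{J}_D[3]$. The substitution $D'(t) = -27D(-3t)$ in \eqref{wide} enters because the $3$-isogenous curve to $E_D$ is (a twist of) $E_{D'}$: the dual side of the descent sees $\mathcal{J}_{D'}[3]$, so the full obstruction vanishing requires \emph{both} $\mathcal{J}_D[3] = 0$ and $\mathcal{J}_{D'}[3] = 0$, giving the stated hypothesis.

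Concretely the steps are: (i) prove the Lang--N\'eron finite generation and the rank-$0$/positive dichotomy for $E_D$, ruling out affine torsion points when $\deg D$ is odd and $D$ monic (a short explicit computation: a torsion section would force $D$ to have special shape incompatible with being a non-constant squarefree monic of odd degree); (ii) identify the $3$-isogeny $\phi\colon E_D \to E_{D'}$ and its dual over the function field, computing the kernels in terms of $\zeta_3$; (iii) run the $\phi$-descent and $\hat\phi$-descent, expressing the relevant Selmer groups via the cubic algebra $k(t)[u]/(u^3+D)$ and its $D'$-analogue, and matching the resulting ``ambiguous-factorization'' data to $\mathcal{J}_D[3]$ and $\mathcal{J}_{D'}[3]$ using Propositions \ref{amb} and \ref{the1}; (iv) conclude that if both $3$-torsion groups vanish, every affine point must be trivial, and combine with (i). The hardest part, I expect, is step (iii): correctly normalizing the descent maps so that the image really is cut out by the $2^{e_1}$-type factorizations already analyzed, and verifying that no ``extra'' local obstructions at the infinite place or at primes dividing $3$ or the leading coefficient slip in — essentially the function-field bookkeeping that replaces Fueter's congruence conditions $D \equiv 2 \pmod 9$, $D \not\equiv 1 \pmod 4$. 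That bookkeeping should come out clean precisely because $D$ is monic of odd degree, which pins down the behaviour at the place $t = \infty$.
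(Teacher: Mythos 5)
Your step (iii) is where the argument breaks, and it breaks for a structural reason: you have mixed up the bookkeeping of a 2-descent with that of the 3-isogeny descent the theorem needs. In the cubic algebra $k(t)[u]/(u^3+D)$ the identity supplied by an affine point is $y^2=\prod_i(x-\rho_i)$, so what one can extract there is that $x-u$ is a \emph{square} up to controlled factors; that is the 2-descent picture, and it is exactly the picture to which your ``ambiguous-factorization'' data $D=D_1D_2$ and Propositions \ref{amb} and \ref{prg} belong. That group has exponent 2, so it cannot be ``exactly $\mathcal{J}_D[3]$'', and nothing in Propositions \ref{amb}, \ref{prg} or \ref{the1} detects 3-torsion. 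The factorization you reject as ``wrong'', namely $x^3=(y-\sqrt{D})(y+\sqrt{D})$ in the quadratic algebra $k(t)[s]/(s^2-D)$ (the function field of the hyperelliptic curve), is precisely the one a Fueter-style argument requires: the obstruction to $(y-s)$ being a principal cube is a divisor class of order dividing 3, i.e.\ an element of $\mathcal{J}_D[3]$. Also, no CM over $k(\zeta_3)$ is needed: the 3-isogeny $\mathcal{E}_D\to\mathcal{E}_{-27D}$ with kernel $\{O,(0,\pm\sqrt{D})\}$ is defined over $k(t)$, and after $t\mapsto -3t$ it is the paper's pair $\psi,\psi'$ in (\ref{phip}). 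As written, your descent cannot produce the hypothesis ``$\mathcal{J}_D[3]=\mathcal{J}_{D'}[3]=0$''.

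For comparison, the paper avoids Selmer-type bookkeeping entirely: from an affine point $(X/V^2,Y/V^3)$ it builds, via the converse of the Eisenstein--Cayley syzygy (Proposition \ref{momo}), a binary cubic form of discriminant $4D$ whose Hessian gives a class $C_P$ of order 1 or 3 in $\Cc_D\cong\mathcal{J}_D$ (Propositions \ref{cayy} and \ref{the1}); if $C_P$ is trivial, Lemma \ref{triv} shows $P=\psi'(P')$ for some $P'\in\mathcal{E}_{D'}$, and $C_{P'}\in\Cc_{D'}$ must have order 3 unless $P$ is a triple, which one may assume it is not. Two further points on your step (i): the first statement of the theorem concerns arbitrary square-free non-constant $D$, whereas you only rule out affine torsion when $D$ is monic of odd degree; the paper instead proves the dichotomy by the explicit duplication formulas, showing the degree of the denominator strictly grows after at most two doublings, which needs no torsion classification. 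And since $j=0$, the curve $\mathcal{E}_D$ is isotrivial, so quoting Lang--N\'eron requires a word of care; finite generation of $\mathcal{E}_D(k(t))$ holds here because $k$ is a number field.
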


\subsubsection*{Examples}
i) Suppose that $a,b\in k$ satisfy  $4a^3+27b^2\neq 0$ and that 
\begin{equation}\label{is}
3t^4+6at^2+12bt-a^2=0
\end{equation}
has no roots  $t\in k$. 
Then the equation \begin{equation}\label{ex21}  y^2=x^3+t^3 +at+b\end{equation}has no solutions $x,y \in k(t)$.   To see this, note that (\ref{is}) determines the $t$-coordinates of points of order 3 on the elliptic curve $\mathcal{J}_D: s^2=t^3 +at+b$
and the change of variables $t\mapsto -\frac{t}{3}$  gives the corresponding equation for  $s^2=t^3 +9at-27b$,  which is isomorphic to  $\mathcal{J}_{D'}$.  Thus,
 if (\ref{is}) has no rational roots then neither $\mathcal{J}_D$ nor $\mathcal{J}_{D'}$ 
 has rational points of order 3 and we may apply Theorem \ref{t2}.
Suppose that $k=\Q$. Using (\ref{is}), an easy counting  argument  shows that the number $N(X)$ of admissible $(a,b)\in \Z$ with \[\max (4|a|^3,27|b|^2)\leq X\] and such that (\ref{ex21}) has solutions  $x,y\in \Q(t)$,
 satisfies   $N(X) \ll X^{\frac{1}{2}}$. Therefore equations (\ref{ex21}) with $a,b\in \Z$ and  solutions $(x,y)\in \Q(t)^2$ are rare.

\smallskip
ii) The curve $y^2=x^3+t^3-3$ with $D(t)=t^3-3$ over $\Q(t)$ contains the point $(\tfrac{4-t^3}{t^2},\tfrac{3t^3-8}{t^3})$.  Now $\mathcal{J}_{D}$ has trivial torsion, while 
$\mathcal{J}_{D'}$  has points of order 3.  Thus it is not enough in the second statement of Theorem \ref{t2} to assume only that  $\mathcal{J}_D(k)$  has no points of order 3
 to conclude that $y^2=x^3+D(t)$ has no solutions. Similarly, it is not enough to assume only that $\mathcal{J}_{D'}(k)$ has no points of order 3. 
 
 \smallskip
  \subsubsection*{Acknowledgements}
  I am grateful to David Rohrlich and Umberto Zannier for their valuable comments. 

\section{Overview and discussion of proofs }
To prove the theorems of this paper, binary forms with polynomial coefficients provide an efficient and convenient tool. 
The next section \S \ref{s3} ends with the proofs of Theorems \ref{t0} and \ref{t1}. 
We use a set of binary quadratic forms with coefficients in $k[t]$ that are analogous to positive definite binary quadratic forms with integral coefficients. 
They split into a set of classes $\mathcal{C}_D$ of forms with discriminant $D$. 
 An adaptation of Dirichlet-Dedekind's version of Gaussian composition  makes $\Cc_D$    into an abelian group.  
 Following  Gauss, we  count  classes of order two in terms of ambiguous forms.   
 We define genus characters using  certain well-known Steinberg symbols studied in \cite{Mil}.   In the classical case, to prove quadratic reciprocity it is enough to know,  in addition to the number of  classes of order two,  that the class number is finite and that a genus character is trivial when  evaluated at a square.  
 For us, the corresponding argument is enough to prove the ``if" part of Theorem \ref{t0}.  We actually prove the theorems with $\J_D(k)$ replaced by $\Cc_D$ in the statements, and later show that $\J_D(k)$ is isomorphic to $\Cc_D.$
 
For the proof of the converse   we need  an analogue of Gauss's principal genus theorem.  This result identifies $\Cc_D^2$ with the classes killed by all genus characters.
 Here a new difficulty arises since  there are infinitely many polynomials of bounded degree. The fact that there are only finitely many integers of bounded norm is  behind most proofs of the classical principal genus theorem, such as    Gauss's,  that uses reduction of  indefinite ternary quadratic forms, and proofs  using $L$-functions or  Legendre's theorem.
To overcome this difficulty, we  apply  the determination of the Witt ring of  $k(t)$ given by Milnor \cite{Mil2} and formulated in Proposition \ref{mh}. This result  implies a kind of Hasse principle (see Proposition \ref{has})
 that allows us to identify genera with rational classes (i.e. $k(t)$-classes) of forms and prove the genus theorem.
   It is to be observed  that the theorem of Milnor is not formulated in terms of local fields but in terms of their residue fields, which for us are number fields. 
 
  The proof of the ``only if" part of Theorem \ref{t0} also requires  that if $\Cc_D$ is infinite, it must contain a class that is not a  square. For this it is enough to know that $\Cc_D$ is finitely generated,  which is  a consequence of the Mordell-Weil theorem.  Toward the end of \S \ref{s3} we show that  $\Cc_D$ is isomorphic to the Mordell-Weil group $\J_D$.
This result  is based in part on work of Jacobi \cite{Jac},  Mumford \cite{Mum} and David Cantor  \cite{Can1}. Cantor  translated divisor arithmetic into computations with polynomials
 that also occur in the  composition formulas.   His algorithms have  been extensively applied when $k$ is a finite field. 
For my purpose, I need to make the  relation between divisor classes and the classes of binary quadratic forms that I have defined, completely explicit. 
In terms of quadratic forms, the weak Mordell-Weil theorem 
is equivalent to the finiteness of the number of genera. In view of this, the result we give in Proposition \ref{mt0}  that forms are in the same genus if and only if they are rationally equivalent,  is of independent interest.

A third ingredient  in the proof of the  ``only if" part of Theorem \ref{t0} is an application of Hilbert's irreducibility theorem over $k.$
Thanks to  this result,  it  is not hard to make a  binary quadratic form in $\Qc_D$  represent primes. 

 Concerning Theorem \ref{t1},  in view of  \cite[Art. 305-307]{Ga0} it seems likely that  Gauss was aware that the principal genus theorem gives information about  elements of order 4 in the class group of binary forms over $\Z$,  along the lines of the theorem of Redei and Reichardt that we stated. Such a proof is carried out  in  \cite[p.163]{Ven}.
 Once we have the principal genus theorem over $k[t]$ and have characterized the classes of order two,   Theorem \ref{t1} follows similarly.

 The proof of the second statement in Theorem \ref{t2} is based on the theory of binary cubic forms with coefficients in $k[t]$;  the aspects of this theory that we need are given in \S \ref{s4}.   That binary cubic forms over $\Z$ give information about classes of binary quadratic forms of order 3 was discovered by Eisenstein \cite{Eis}. 
We make  essential use of a $k(t)$-version of a familiar pair of dual 3-isogenies connecting the elliptic curves $E_D$ and $E_{-27D}$,  one  that  was employed by Fueter (see \cite[p.71.]{Fue}).
The non-existence part of Fueter's theorem was extended and parts of his proof simplified by Mordell (see \cite{Mor1} and \cite[Chap. 26]{Mor}).   These proofs of Fueter and Mordell use algebraic number theory.  As will become clear, it is also possible to prove such results using classical binary cubic forms.\footnote {On the other hand, it would be natural give a  proof of  the second statement of Theorem \ref{t2} along the lines of those of Fueter and Mordell using quadratic extensions of $k(t).$ In fact, Zannier has kindly shown me such a proof of the first statement.}  
 Well-known results from algebraic number theory allow one to  assume in the statement of  such theorems over $\Q$  the class number condition for  only one of $\Q(\sqrt{D})$ and $\Q(\sqrt{-3D})$ (c.f. \cite{Sch}). 
The corresponding simplification can not be made in general in  Theorem \ref{t2}, as Example ii) below its statement shows.

 A consequence of some parts of the  proof of Theorem \ref{t2}  is  the finiteness of the number of classes of binary cubic  forms with coefficients in $k[t]$ and with a fixed discriminant $4D$, where $D$  is  monic, square-free with odd degree (see Proposition \ref{ii}).
  
The changes needed to include discriminants  $D\in k[t]$ with even degree in our results lead to  interesting problems that are not addressed in this paper.

\section{Binary quadratic forms over $k[t]$}\label{s3}


The theory of  binary quadratic forms over $k[t]$ can be developed in a way that is  quite similar to that over $\Z$, especially when the discriminant has odd degree, where the forms are similar to positive definite ones. 

I assume throughout that $k$ is a fixed number field. However, 
except for the application of the Mordell-Weil theorem to $\J_D$,  most of the results presented here remain valid when $k$ is any perfect field of characteristic not 2.

   \subsection*{Preliminaries} 
The gcd of a finite set  of polynomials in $k[t]$ is the monic polynomial of largest degree that divides each. Two polynomials are coprime if their gcd is 1.  If $c=\gcd (a,b)$ then there are $x,y\in k[t]$ with $c=ax+by.$
For $a\in k[t]$ the inequality  $|a|<0$ is equivalent to  $a=0.$

Suppose that $D\in k[t]$ is square-free. 
 Associated to  $Q=\left( \begin{smallmatrix}a& b\\b&c \end{smallmatrix}\right)$  with $a,b,c\in k[t]$  is the binary quadratic form
\[
Q(x,y)=(a,b,c)=a(t)x^2+2b(t)xy+c(t)y^2
\]
with  discriminant ($=$ Gaussian determinant) $D=b^2-ac.$ 
  Note that any form $Q=(a,b,c)$ with $b^2-ac=D$ is primitive in that $\gcd(a,b,c)=1.$
  Let $M\in \mathrm{SL}_2(k[t])$ act on $Q$ by
  \begin{equation}\label{e2}Q|M\defeq MQM^t. \end{equation}
For $M=\left( \begin{smallmatrix}m& r\\n&s \end{smallmatrix}\right)$  we have   $Q|M(x,y) =Q(mx+ny,rx+sy)=Q'(x,y)$ and  the coefficients of  $Q'=(a',b',c')$  are given by 
\begin{align}\label{eq00}
a'=&am^2+2bmn+cn^2\\\nonumber
b'=&amr+b(ms+rn)+cns\\\nonumber
c'=&ar^2+2brs+cs^2.
\end{align}
Clearly $b'^2-a'c'=D.$ The two forms $Q$ and $Q'$ are said to be equivalent, written $Q\sim Q'.$

Say that $n\in k[t]$ is  {\it represented} by $Q$ if $Q(x,y)=n$ for  $x,y\in k[t]$
and that $n$ is {\it properly represented} by $Q$ if $Q(x,y)=n$ for coprime $x,y\in k[t].$ Equivalent forms represent and properly represent the same polynomials.

The integral version of the following useful  result was given by Dirichlet \cite[\S 60]{Dir2}.   The proof for $k[t]$ is similar.
\begin{lemma}\label{gl}
Let $Q=(a,b,c)$ and $Q'=(a',b',c')$,  both with discriminant $D$, be given. They are equivalent if 
there exist $m,n,r,s\in k[t]$ with 
\begin{align}\label{eq1}
&a'=am^2+2bmn+cn^2,\\\nonumber
&(b+b')n+am=a's\;\;\text{and}\\\nonumber
&(b'-b)m-cn=a'r,
\end{align}
in which case  $Q|M=Q'$  with $M=\left( \begin{smallmatrix}m& r\\n&s \end{smallmatrix}\right)\in \mathrm{SL}_2(k[t])$.
Conversely, if $Q,Q'$  satisfy $Q'=Q|M$ with this $M$,  then (\ref{eq1}) holds. 

\end{lemma}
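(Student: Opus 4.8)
The plan is to carry over Dirichlet's classical argument \cite[\S 60]{Dir2} essentially verbatim, since every manipulation involved is a polynomial identity valid over an arbitrary commutative ring; the only place where the nature of $k[t]$ (and of $D$) enters is one division step noted at the end. Suppose first that $m,n,r,s\in k[t]$ satisfy the three relations \eqref{eq1} and put $M=\smat{m}{r}{n}{s}$. The first thing I would check is that $\det M=1$: multiplying the second relation of \eqref{eq1} by $m$ and the third by $n$ and subtracting yields
\[
a'(ms-nr)=m\bigl[(b+b')n+am\bigr]-n\bigl[(b'-b)m-cn\bigr]=am^2+2bmn+cn^2=a',
\]
the final equality being the first relation of \eqref{eq1}. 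Since $a'\neq 0$ — which holds whenever $\deg D$ is odd, and more generally whenever $D$ is non-constant, because $a'=0$ would force $D=b'^2$ against square-freeness — this gives $ms-nr=1$, so $M\in\SL_2(k[t])$.

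Next, write $Q|M=(a'',b'',c'')$ using \eqref{eq00}. Then $a''=am^2+2bmn+cn^2=a'$ is immediate from the first relation of \eqref{eq1}. For the middle coefficient the key trick is to regroup the formula \eqref{eq00} for $b''$ as
\[
b''=amr+b(ms+rn)+cns=r(am+bn)+s(bm+cn),
\]
and then to substitute $am+bn=a's-b'n$ and $bm+cn=b'm-a'r$, which are precisely the second and third relations of \eqref{eq1} after rearrangement; this collapses to $b''=b'(ms-nr)=b'$. Finally $c''=c'$ follows from $b''^2-a''c''=D=b'^2-a'c'$ together with $a''=a'$ and $b''=b'$, after dividing by $a'\neq 0$. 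Hence $Q|M=(a',b',c')=Q'$, which proves $Q\sim Q'$.

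For the converse I would run the computation backwards. Assume $Q'=Q|M$ with $M=\smat{m}{r}{n}{s}\in\SL_2(k[t])$, so $ms-nr=1$ and $a',b',c'$ are given by \eqref{eq00}. The first relation of \eqref{eq1} is then just the formula for $a'$. For the second, one computes, using $ms-nr=1$ only at the last step,
\[
a's-b'n=s\bigl(am^2+2bmn+cn^2\bigr)-n\bigl(amr+b(ms+rn)+cns\bigr)=(ms-nr)(am+bn)=am+bn,
\]
which rearranges to $(b+b')n+am=a's$; the third relation is obtained by the analogous reduction starting from $a'r$, and can also be read off from $c''=c'$.

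The argument is entirely formal, so I do not expect a genuine obstacle — it is really a bookkeeping exercise in the identities \eqref{eq00}. The one point that calls for care is the division by $a'$ used to pass from equality of discriminants to the equality $c''=c'$; this is exactly where the hypothesis that $D$ is square-free (and, in the cases of interest, of odd degree, so that the outer coefficients of a form of discriminant $D$ cannot vanish) is being used, and it is the only spot where the $k[t]$-setting differs in substance from the integral one.
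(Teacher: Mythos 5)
Your proof is correct and follows exactly the route the paper intends: the paper gives no written proof, merely citing Dirichlet \cite[\S 60]{Dir2} and asserting that the integral argument carries over, and your computation is precisely that adaptation (including the two divisions by $a'\neq 0$, justified since $D$ is square-free and non-constant in all cases of interest). Nothing further is needed.
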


A proof of the next standard result may be found in  \cite[Thm VII.3]{New}).
\begin{lemma}\label{sl}
        $\mathrm{SL}_2(k[t])$ is generated by
\[
C_\a=\left(\begin{smallmatrix}
   \a& 0\\
    0& \a^{-1}
    \end{smallmatrix}\right),
T_n=\left(\begin{smallmatrix}
   1& n(t)\\
    0& 1
    \end{smallmatrix}\right)
\;\;\;\;\mathrm{and}\;\;\;S=\left(\begin{smallmatrix}
   0& -1\\
    1& \;0
\end{smallmatrix}\right)
\]
for $n\in k[t]$ and $\a\in k^{\sbullet}.$
\end{lemma}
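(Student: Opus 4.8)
The plan is to reduce an arbitrary $M=\smat{a}{b}{c}{d}\in\mathrm{SL}_2(k[t])$ to the identity by the Euclidean algorithm applied to its first column, using the fact that left multiplication by the three listed generators realizes elementary row operations. Explicitly, $T_nM$ adds $n$ times the second row of $M$ to the first, the conjugate $ST_nS^{-1}=\smat{1}{0}{-n}{1}$ subtracts $n$ times the first row from the second, $SM$ interchanges the rows up to sign, and $S^2=-I$ lets one absorb any stray sign. Since $T_n^{-1}=T_{-n}$, $C_\a^{-1}=C_{\a^{-1}}$ and $S^{-1}=S^3$, the inverse of each generator is again a word in the generators, so it suffices to transform $M$ into $I$ by such row operations.

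First I would observe that $\gcd(a,c)$ divides $\det M=ad-bc=1$, so $a$ and $c$ are coprime and in particular not both zero. Applying $S$ if necessary I may assume $c\neq 0$, and then the polynomial division $a=qc+r$ with $|r|<|c|$ (using that $k[t]$ is Euclidean for the degree, where $|\sbullet|<0$ means the polynomial is $0$) replaces the first column $(a,c)^t$ by $(r,c)^t$ after a single row operation. Iterating, with a swap by $S$ whenever the lower entry has strictly larger degree, the degree in the column drops at each step, so after finitely many steps the first column becomes $(v,0)^t$ with $v\in k[t]\setminus\{0\}$. The resulting matrix is upper triangular with determinant $1$, which forces its diagonal entries to be nonzero constants; write it as $\smat{v}{w}{0}{v^{-1}}$ with $v\in k^{\sbullet}$ and $w\in k[t]$. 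Since $C_{v^{-1}}\smat{v}{w}{0}{v^{-1}}=\smat{1}{v^{-1}w}{0}{1}=T_{v^{-1}w}$, the reduced matrix equals $C_vT_{v^{-1}w}$, and unwinding the row operations expresses the original $M$ as a word in $C_\a$, $T_n$ and $S$.

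The difficulty here is entirely bookkeeping rather than conceptual: one must verify that both types of row operation and the row interchange really are available from the given three generators (this is where the conjugate $ST_nS^{-1}$ and the relation $S^2=-I$ enter), and one must dispose of the degenerate cases ($a=0$ or $c=0$ at the start, and the choice of which row to reduce when the two degrees agree) so that the degree genuinely decreases and the process terminates. All of this runs exactly as the classical argument over $\Z$; a detailed treatment may be found in \cite[Thm VII.3]{New}.
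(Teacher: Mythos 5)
Your proof is correct: it is the standard Euclidean-algorithm reduction of the first column via elementary row operations, with the three listed generators supplying the operations and $C_\alpha$ cleaning up the final upper-triangular matrix. The paper itself gives no argument for this lemma beyond citing Newman's \emph{Integral Matrices}, Thm.\ VII.3, and the argument you wrote out is precisely the one found there, so there is nothing to compare beyond noting that you have filled in the proof the paper delegates to the reference.
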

We have the formulas
\begin{align}\label{eqf1}
&(a,b,c)|C_\a=(\a^2a,b,\a^{-2}c)\\\nonumber
&(a,b,c)|T_n=(a,b+na,c+2nb+n^2a)\\\nonumber
&(a,b,c)|S=(c,-b,a).
\end{align}

The following Lemma is easily verified.

 \begin{lemma}\label{ls1}
 Suppose that $D\in k[t]$ is square-free and that $Q=(a,b,c)$ has $D=b^2-ac.$
The group of  automorphs  of $Q$, that is those $M \in \mathrm{SL}_2(k[t])$ with $Q|M=Q$, are all of the form 
\begin{equation}\label{M}
M=\pm \left(\begin{smallmatrix}
 x-by& ay\\
   - cy& x+by
    \end{smallmatrix}\right),\end{equation}
for $x,y \in k[t]$ solutions of the Pell equation 
 \begin{equation}\label{pell}
   x^2-D(t)y^2=1.
    \end{equation}
  Transformations $M_1$ with $Q|M_1=(-a,b,-c)$ exist if and only if the negative Pell equation 
    \begin{equation}\label{npell}
   x^2-D(t)y^2=-1
    \end{equation}
has solutions, in which case each such $M_1$ is given by
\begin{equation*}\label{M2}
M_1=\pm \left(\begin{smallmatrix}
   x-by& ay\\
   cy& -x-by
    \end{smallmatrix}\right).\end{equation*}
    For a solution $x,y$ of (\ref{npell}), the  transformation $M$ from (\ref{M}) has $\det M=-1$ and satisfies $Q|M=-Q.$
    
    \end{lemma}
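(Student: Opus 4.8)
The plan is to compute directly using the defining equation $Q|M = MQM^t$ together with the constraint $\det M = 1$, exploiting that $D = b^2 - ac$ is square-free (so $D$ is not a unit and $k[t]/(D)$ has no nilpotents beyond what square-freeness allows). First I would identify the symmetric matrix attached to $Q=(a,b,c)$, namely $\left(\begin{smallmatrix} a & b \\ b & c\end{smallmatrix}\right)$, whose determinant is $ac - b^2 = -D$. An automorph $M = \left(\begin{smallmatrix} m & r \\ n & s\end{smallmatrix}\right) \in \mathrm{SL}_2(k[t])$ must satisfy $M \left(\begin{smallmatrix} a & b \\ b & c\end{smallmatrix}\right) M^t = \left(\begin{smallmatrix} a & b \\ b & c\end{smallmatrix}\right)$. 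Writing this out gives three polynomial equations in $m,n,r,s$; combined with $ms - nr = 1$, I expect to solve for $r$ and $s$ in terms of $m$ and $n$ and to extract the parametrization. Concretely, one sets $x = \tfrac{1}{2}(\mathrm{tr}\, M)$, or more robustly one works backwards from the claimed form: if $M = \pm\left(\begin{smallmatrix} x - by & ay \\ -cy & x+by\end{smallmatrix}\right)$, then $\det M = (x-by)(x+by) + acy^2 = x^2 - b^2 y^2 + ac y^2 = x^2 - Dy^2$, so $\det M = 1$ forces the Pell equation~\eqref{pell}, and a short check verifies $Q|M = Q$. The content is the converse: that every automorph has this shape.

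For the converse I would argue as follows. Given an automorph $M$, subtract $x I$ (with $x$ chosen appropriately, e.g. $2x = \mathrm{tr}\, M$ if $\mathrm{char}\, k \neq 2$, which holds) to write $M = xI + N$ where $N$ is trace-zero. The automorph condition, after using $M^t \left(\begin{smallmatrix} a & b \\ b & c\end{smallmatrix}\right)^{-1} M = \left(\begin{smallmatrix} a & b \\ b & c\end{smallmatrix}\right)^{-1}$ or equivalently manipulating $MQ = Q(M^t)^{-1}$, forces $N$ to be a $k[t]$-multiple of the specific trace-zero matrix $\left(\begin{smallmatrix} -b & a \\ -c & b\end{smallmatrix}\right)$ that commutes appropriately with $Q$; write $N = y\left(\begin{smallmatrix} -b & a \\ -c & b\end{smallmatrix}\right)$. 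Here is where square-freeness of $D$ enters: a priori one only gets the relation on the level of $k(t)$ or modulo $D$, and one needs that the coefficients $x, y$ actually lie in $k[t]$, not merely in $k(t)$ — this follows because $\gcd(a,b,c) = 1$ (primitivity, already noted in the preliminaries) lets one solve for $x$ and $y$ integrally from the entries of $M$. Then $\det M = x^2 - Dy^2$ as computed above, giving~\eqref{pell}.

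For the statement about $M_1$ with $Q|M_1 = (-a,b,-c) = -Q|S \cdot(\text{sign adjustments})$: note $(-a,b,-c)$ has the same discriminant $b^2 - (-a)(-c) = b^2 - ac = D$, and $(-a,b,-c) = (a,b,c)$ would force $a = c = 0$, impossible for square-free $D$ of the relevant degree, so these are genuinely transformations between $Q$ and $-Q$ (since $(-a,-b,-c) = -Q$ and $(-a,b,-c) \sim -Q$ via $S$, or one simply observes $-Q = Q|M$ has determinant of the transforming matrix equal to $-1$). I would run the same linear algebra with the target $-Q$ in place of $Q$: an intertwiner $M$ with $MQM^t = -Q$ satisfies $M^2$ is an automorph of $Q$, and $\det(M)^2 \det(-Q) / \det(Q) = 1$ gives $\det M = \pm 1$ automatically; pinning down $\det M = -1$ and the explicit shape of $M_1$ comes from the same decomposition $M = xI + yN'$ with $N'$ now the relevant trace-zero matrix, and $\det M_1 = -(x^2 - Dy^2)$, so solvability is equivalent to the negative Pell equation~\eqref{npell}. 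The final sentence — that for a solution of~\eqref{npell} the matrix $M$ of~\eqref{M} has $\det M = -1$ and $Q|M = -Q$ — is then immediate from $\det M = x^2 - Dy^2 = -1$ and a direct expansion of $MQM^t$.

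The main obstacle I anticipate is the integrality step: showing that the parameters $x, y$ solving the Pell equation lie in $k[t]$ rather than just in $k(t)$ (or in some quotient ring), and correctly handling the ambiguity — the overall sign $\pm$, and the distinction between the two families $M$ and $M_1$ — so that the parametrization is exactly as stated with no spurious or missing solutions. This is routine but requires care with the primitivity hypothesis $\gcd(a,b,c)=1$ and with the fact that $D$ square-free of odd degree is nonzero and non-square, so that the Pell equations have the expected structure (in particular $y \neq 0$ solutions correspond to nontrivial automorphs, and the form of $M$ is rigid).
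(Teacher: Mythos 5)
The paper gives no proof of this lemma (it is introduced with ``easily verified''), and your outline is the intended direct verification and is essentially correct: the displayed matrices are checked by hand, with $\det M=(x-by)(x+by)+acy^{2}=x^{2}-Dy^{2}$, and for the converse the relation $MQ=Q(M^{t})^{-1}$ (valid since $\det M=1$) gives $an=-cr$, $a(m-s)=-2br$, $c(m-s)=2bn$, so that setting $y=r/a$ one finds $ay,by,cy\in k[t]$ and hence $y\in k[t]$ by primitivity (automatic since $D$ is square-free), with $x=\tfrac{1}{2}(m+s)$ and $\det M=x^{2}-Dy^{2}=1$ yielding \eqref{pell}. Two cautions as you fill in the details: carry out the computation with the matrix identity \eqref{e2} rather than the coefficient formulas \eqref{eq00}, which are written for the transposed substitution and would make the matrices in \eqref{M} appear \emph{not} to be automorphs (test on $(1,0,-D)$ to see the discrepancy); and in the $(-a,b,-c)$ case the trace of $M_{1}$ is $-2by$ rather than $2x$, so your trace-zero decomposition should be applied not to $M_{1}$ itself but to $\left(\begin{smallmatrix}1&0\\0&-1\end{smallmatrix}\right)M_{1}$, which converts the condition into $MQM^{t}=-Q$ with $\det M=-(\det M_{1})=-1$, i.e.\ $x^{2}-Dy^{2}=-1$, and recovers exactly the final assertion of the lemma.
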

 Solutions with $y\neq 0$ to (\ref{pell}) can exist only for even degree $D$  and then only in special cases.   The problem of solving polynomial Pell equations has a long history 
 going back to Abel and Chebyshev. See \cite{Pla},  \cite{Za} for more information.

 \begin{lemma}\label{repp}
  Let $Q$ have  discriminant $D$.
  
  i)  If $a'\in k[t]$ is properly represented by $Q$ then there exists  $Q'=(a',b',c')$ that is equivalent to $Q$, where $|b'|<|a'|.$
  
  ii) $Q$ can properly represent a polynomial $n\in k[t]$ with $|n|\leq \tfrac{1}{2}|D|.$
  
  iii) $Q$ can properly represent some $n\in k[t]$ that is prime to any fixed $f\in k[t]$.
  
  \end{lemma}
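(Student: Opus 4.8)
The plan is to obtain all three parts by Gauss-style reduction of binary quadratic forms, carried out with the division algorithm in $k[t]$ together with the generators $C_\a,T_n,S$ of $\mathrm{SL}_2(k[t])$ from Lemma~\ref{sl}. For (i), starting from a proper representation $a'=Q(m,n)$ with $\gcd(m,n)=1$, I would complete $(m,n)$ to a matrix $M=\smat{m}{r}{n}{s}\in\mathrm{SL}_2(k[t])$ (possible because $k[t]$ is a PID), so that by \eqref{eq00} the form $Q|M=(a',b'',c'')$ has first coefficient $a'$, with $a'\neq 0$ since otherwise $D=b''^2$ would be a square. Then the division algorithm gives $\ell\in k[t]$ with $|b''+\ell a'|<|a'|$, and by \eqref{eqf1} the equivalent form $(Q|M)|T_\ell=Q|(T_\ell M)$ is $(a',\,b''+\ell a',\,\ast)$, which is the desired $Q'$.

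For (ii) I would iterate this reduction. Take $a=Q(1,0)$, which is properly represented; after applying (i) we may assume $|b|<|a|$. If $|a|\le|c|$, stop; otherwise $|c|<|a|$, and $S$ sends $(a,b,c)$ to $(c,-b,a)$ by \eqref{eqf1}, whose leading coefficient has strictly smaller degree, so I reduce the new middle coefficient by (i) and repeat. The degree of the leading coefficient strictly decreases at each pass and never reaches $0$ --- a vanishing leading coefficient would make $D$ a square --- so the process halts at a form equivalent to $Q$, which I again denote $(a,b,c)$, satisfying $|b|<|a|\le|c|$. Then $|b^2|=2|b|<2|a|\le|a|+|c|=|ac|$, so no cancellation occurs in $D=b^2-ac$ and $|D|=|ac|=|a|+|c|\ge 2|a|$; hence $|a|\le\tfrac{1}{2}|D|$, and since equivalent forms represent the same polynomials, $a$ is properly represented by $Q$.

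For (iii) I would factor $f=u\,p_1^{e_1}\cdots p_r^{e_r}$ into distinct monic primes of $k[t]$ (there is nothing to prove when $f$ is a unit). For each $i$, primitivity of $Q$, i.e. $\gcd(a,b,c)=1$, shows that $p_i$ does not divide all of $a,b,c$; inspecting $Q(1,0)=a$, $Q(0,1)=c$ and $Q(1,1)=a+2b+c$ and using $\mathrm{char}\,k\ne 2$, I obtain $(x_i,y_i)\in\{(1,0),(0,1),(1,1)\}$ with $p_i\nmid Q(x_i,y_i)$. The Chinese Remainder Theorem in $k[t]$ then produces $x_0,y_0\in k[t]$ with $x_0\equiv x_i$ and $y_0\equiv y_i\pmod{p_i}$ for all $i$, so that $Q(x_0,y_0)$ is prime to $f$, in particular nonzero. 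Finally, writing $g=\gcd(x_0,y_0)$, $x_0=gx_1$, $y_0=gy_1$ with $\gcd(x_1,y_1)=1$, one has $Q(x_0,y_0)=g^2Q(x_1,y_1)$, so $n\defeq Q(x_1,y_1)$ divides $Q(x_0,y_0)$, is therefore prime to $f$, and is properly represented by $Q$.

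The only genuinely substantive point is the termination-plus-estimate in (ii): everything rests on the strict chain $|b|<|a|\le|c|$ at a reduced form ruling out cancellation in $b^2-ac$, so that $|D|=|ac|$ --- the exact polynomial-degree analogue of classical Gauss reduction, with $\deg$ in place of archimedean size. Parts (i) and (iii) are routine, the only care needed being the exclusion of the degenerate cases in which a leading coefficient vanishes and $D$ is forced to be a perfect square.
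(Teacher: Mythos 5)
Your argument is correct, and it is worth recording where it coincides with and where it departs from the paper. Part i) is the paper's proof verbatim in substance: complete the coprime pair to an element of $\mathrm{SL}_2(k[t])$ and kill the excess of the middle coefficient with a translation $T_\ell$ supplied by the division algorithm. In part ii) you run the reduction loop (translations alternating with $S$) and get termination from the strict descent of $\deg a$; the paper reaches the reduced shape in one step by taking $n$ of \emph{minimal degree} among the polynomials represented by $Q$ (such a representation is automatically proper, and minimality forces $|n|\le |c|$). The closing estimate is identical in both: $|b^2|<|ac|$ rules out cancellation in $b^2-ac$, so $2|n|\le |nc|=|D|$. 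Your version is more algorithmic, the paper's extremal choice is shorter and is the same device that underlies the uniqueness statement in Proposition \ref{min}. In part iii) you genuinely diverge: for each prime $p\mid f$ you pick one of $(1,0),(0,1),(1,1)$ at which $Q$ is a unit modulo $p$ (using primitivity and $\mathrm{char}\,k\ne 2$), glue by the Chinese Remainder Theorem, and then divide out $g=\gcd(x_0,y_0)$, noting $Q(x_0,y_0)=g^2Q(x_1,y_1)$, to restore properness. The paper instead makes a single explicit substitution $n=Q(p_2,p_3p_4)$, where $p_2,p_3,p_4$ are the products of the primes of $f$ sorted according to whether they divide $a$ and/or $c$; that representation is proper by construction, so no CRT and no gcd step are needed. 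Both routes are sound. One cosmetic caveat: your aside that a vanishing first coefficient ``would make $D$ a square'' only rules out the degenerate case when $D$ is non-constant (a square-free constant such as $D=1$ can be a square, and the paper does later apply this lemma to a form of discriminant $1$); but in that degenerate case the conclusion of i) is vacuous and the paper's proof makes the same tacit assumption, so nothing essential is lost.
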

  \begin{proof}
  
i)  If $Q(m,n)=a'$ where $m,n$ are coprime then there exist $r,s\in k[t]$ with \[M=\left( \begin{smallmatrix}m& r\\n&s \end{smallmatrix}\right)\in \mathrm{SL}_2(k[t]).\]
Thus $Q|M=(a',*,*)$. Using $T_f$ from (\ref{eqf1}) with an appropriate $f$ found by the division algorithm, we can force  $|b'|<|a'|.$

ii) Let $n\in k[t]$ be a polynomial of minimal degree represented by $Q$. The representation is proper. By i)  we may assume that  $Q=(n,b,c)$, where $|b|<|n|.$ 
Since $c$ is properly represented by $Q$ we must have $|b|<|n|\leq |c|$, which implies that $2|n|\leq |nc|=|D|.$

iii) Let $p_1\in k[t]$ be the product of all primes dividing $a,c$ and $f$. Let $p_2$ be the product of all primes dividing $a$ and $f$ but not $c$. Let $p_3$ be the product of all primes dividing $c$ and $f$ but not $a$. 
Let $p_4$ be the product of all remaining primes dividing  $f$. Then  
\[
n=Q(p_2,p_3p_4)=a(p_2)^2+2bp_2(p_3p_4)+c(p_3p_4)^2
\]
is prime to $f$ and $\gcd(p_2,p_3p_4)=1.$
 \end{proof}

\subsection*{Forms with a negative discriminant}

The following definition streamlines the  formulation of theorems and is suggestive, if unconventional.
\begin{definition}\label{de1}
Say that $n \in k[t]$ is {\it positive} (and write $n\succ0$) if the leading coefficient of $n^*=(-1)^{|n|}n$ is in $k^{\sbullet 2}$ and {\it negative} ($n\prec0$) if $-n$ is positive.
 Here $|n|=\deg n.$
 \end{definition}
 The product of two positive or two negative polynomials is positive and the product of one positive and one negative polynomial is negative.

\begin{definition}
When $D\in k[t]$ is negative and  square-free with $|D|$   odd,
 let  $\Qc_D$  denote the set of those  $Q=(a,b,c)$ with  discriminant $D$, for which $a\succ0$.
\end{definition}
\smallskip
 Observe that $\Qc_D$  is non-empty, since it contains the principal form $(1,0,-D).$   The following lemma shows that the action $Q\mapsto Q|M$ splits $\Qc_D$ into a set of equivalence classes and that 
the forms in $\Qc_D$  are similar to  positive definite integral forms. 

\begin{lemma}\label{cl}

Suppose that  $D\in k[t]$ is negative and square-free with $|D|$ odd.    If  $Q=(a,b,c)\in \Qc_D$,  then $Q|M\in \Qc_D$ for  $M\in \mathrm{SL}_2(k[t])$. In particular, any $n$ properly represented by $Q$ is positive.
\end{lemma}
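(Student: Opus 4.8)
The plan is to base everything on the completing-the-square identity
\begin{equation*}
a\,Q(x,y)=(ax+by)^2+(-D)\,y^2,\qquad Q=(a,b,c),\ \ b^2-ac=D,
\end{equation*}
together with a parity observation. First I would unwind the hypotheses. Since $|D|$ is odd, the twist $(-1)^{|D|}$ equals $-1$, so by Definition \ref{de1} the condition $D\prec 0$ is the same as saying that the leading coefficient of $D$ itself lies in $k^{\sbullet 2}$; equivalently $-D\succ 0$ and $|{-D}|$ is odd. Moreover no $(a,b,c)$ of discriminant $D$ can have $a=0$ or $c=0$, since $b^2=D$ is impossible for a square-free polynomial of positive degree; in particular $\deg(b^2)$ is even while $\deg(-D)=|D|$ is odd.

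The core step is to show that $Q(x,y)\succ 0$ for every $Q=(a,b,c)\in\Qc_D$ and every $(x,y)\in k[t]^2$ with $(x,y)\ne(0,0)$. In the displayed identity the summand $(ax+by)^2$ is either $0$ or of even degree with a square leading coefficient, hence positive when nonzero; the summand $(-D)\,y^2$ is either $0$ or, being a product of the positive polynomials $-D$ and $y^2$, positive of odd degree. These two summands are not both $0$ (that would force $y=0$ and then $ax=0$, so $(x,y)=(0,0)$), and when both are nonzero they have degrees of opposite parity, so they cannot cancel; hence $a\,Q(x,y)$ is a nonzero polynomial whose leading term is that of whichever summand has the larger degree, and in either case $a\,Q(x,y)\succ 0$. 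Since $a\succ 0$ and $k^{\sbullet 2}$ is a group, dividing out the positive factor $a$ (compare the multiplicativity of $\succ$ recorded after Definition \ref{de1}) gives $Q(x,y)\succ 0$. This already proves the ``in particular'' assertion — indeed it shows every nonzero value of $Q$ is positive, so that every form in $\Qc_D$ is anisotropic over $k(t)$.

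Finally, for $Q|M=(a',b',c')$ with $M=\smat{m}{r}{n}{s}\in\mathrm{SL}_2(k[t])$: the discriminant is unchanged, as noted right after (\ref{eq00}), so $Q|M$ has discriminant $D$; and by (\ref{eq00}) its leading coefficient is $a'=Q(m,n)$, where $(m,n)$ is the first column of $M$ and hence nonzero since $\det M=1$. By the previous step $a'\succ 0$, so $Q|M\in\Qc_D$.

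The one place needing care is the parity bookkeeping in the core step: the whole argument turns on the fact that $(ax+by)^2$ has even degree while $(-D)\,y^2$ has odd degree, so that no cancellation of leading terms can occur — and this is exactly where the hypothesis ``$|D|$ odd'' enters. Everything else follows directly from Definition \ref{de1} and the multiplicativity of $\succ$.
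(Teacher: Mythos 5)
Your argument is correct, but it runs along a genuinely different track from the paper's. The paper first shows $c\succ 0$ by the parity analysis of $D=b^2-ac$ (the same ``even vs.\ odd degree, no cancellation'' point you isolate), then deduces the first statement by checking the three generators of $\mathrm{SL}_2(k[t])$ from Lemma \ref{sl} via the formulas (\ref{eqf1}), and finally gets the second statement from the first together with Lemma \ref{repp} i). You instead work directly from the identity $a\,Q(x,y)=(ax+by)^2-D\,y^2$ and the parity obstruction to cancellation (here, as in the paper, is exactly where $|D|$ odd and the squareness of the leading coefficient of $D$ enter), concluding that \emph{every} nonzero value of $Q$ on $k[t]^2$ is positive and that $Q$ is anisotropic; both statements of Lemma \ref{cl} then drop out at once, since by (\ref{eq00}) the first coefficient of $Q|M$ is $Q(m,n)$ with $(m,n)\neq(0,0)$ the first column of $M$. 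Your route avoids the generator decomposition of $\mathrm{SL}_2(k[t])$ and the appeal to Lemma \ref{repp}, is self-contained, and yields a slightly stronger conclusion (positivity of all represented nonzero values, plus anisotropy over $k(t)$); the paper's route is shorter on the page because it leans on lemmas already in place and reuses the $c\succ0$ computation elsewhere (e.g.\ in the proof of Lemma \ref{l2}). One small point of care you handled correctly: since positivity in the sense of Definition \ref{de1} is not a dichotomy on $k[t]\setminus\{0\}$, the step from $aQ(x,y)\succ0$ and $a\succ0$ to $Q(x,y)\succ0$ does need the observation that leading coefficients of the starred polynomials multiply and that $k^{\sbullet 2}$ is a group, which is exactly the justification you gave.
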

\begin{proof}
From $D=b^2-ac$ and $a\succ0,$  we will show  that $c\succ0.$ Clearly $c\neq 0$. Next use that the leading coefficient of $D$ is in $k^{\sbullet 2}$ with $|D|$  odd to check the different combinations of parities of the degrees of $a$ and $c$. If they are both even or both odd the leading terms of $b^2$ and $ac$ must cancel. Otherwise the leading term of $-ac$ must be in $k^{\sbullet 2}.$

The first statement for any $M$  now follows from  Lemma \ref{sl} and the formulas (\ref{eqf1}).
The second statement follows from the first and i) of Lemma \ref{repp}.

\end{proof}
\begin{definition} For $D\in k[t]$ negative and  square-free with $|D|$  odd, let
$\Cc_D$ be the set of all $\mathrm{SL}_2(k[t])$-classes of forms $Q\in \Qc_D.$
\end{definition}

We have the following analogue of Lagrangian reduction.  
\begin{prop}\label{min}
   Each class $C\in \Cc_D$  contains a unique form $Q=(a,b,c)$ with  \[|b|<|a|<\tfrac{1}{2}|D|,\] where  $a^*$ is monic.
   \end{prop}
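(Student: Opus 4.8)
The plan is to prove the analogue of Lagrangian reduction in two parts: existence of a reduced form in each class, and uniqueness. For existence, I would start with any $Q=(a,b,c)\in\Qc_D$ and, among all forms equivalent to $Q$, pick one whose first coefficient $a$ has minimal degree; since degrees are non-negative integers bounded below, such a form exists. By part i) of Lemma \ref{repp} (applied after noting $a$ is properly represented by $Q$), I may assume $|b|<|a|$. Then, as in the proof of part ii) of Lemma \ref{repp}, the minimality of $|a|$ forces $|a|\le|c|$, and combined with $|b|<|a|$ and $D=b^2-ac$ this gives $|a|\le|c|$ and hence $2|a|\le|ac|=|D|$; since $|D|$ is odd, strict inequality $|a|<\tfrac12|D|$ follows (the degree of $a$ being an integer strictly less than the half-integer $\tfrac12|D|$). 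Finally, applying $C_\a$ from \eqref{eqf1} scales $a$ by $\a^2$, which lets me normalize the leading coefficient of $a^*$ to be $1$ without disturbing the degree inequalities; here I need that the leading coefficient of $a$ is already a square in $k^\bullet$, which is exactly the condition $a\succ0$ guaranteed by $Q\in\Qc_D$ and Lemma \ref{cl}.

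For uniqueness, suppose $Q=(a,b,c)$ and $Q'=(a',b',c')$ are both reduced in this sense and $Q\sim Q'$, say $Q'=Q|M$ with $M=\smat{m}{r}{n}{s}\in\SL_2(k[t])$. The idea is that a reduced form represents no nonzero polynomial of degree smaller than $|a|$, and represents a polynomial of degree $|a|$ only in the ``obvious'' ways. Concretely, for coprime $(x,y)$ with $y\ne0$ one has $|Q(x,y)|\ge|a|$ (with $|c|>|a|$ except possibly when $|a|=|c|$), which I would check by examining leading terms using $|b|<|a|$; for $y=0$, $Q(x,0)=ax^2$. Since $a'=Q(m,n)$ is properly represented of degree $|a'|=|a'|<\tfrac12|D|$, and $|a|<\tfrac12|D|$ as well, comparing shows $n=0$, so $M=\smat{m}{r}{0}{m^{-1}}$ with $m\in k^\bullet$, i.e. $M=T\text{-type}\cdot C_\a$. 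Then \eqref{eqf1} gives $a'=\a^2 a$; the normalization of leading coefficients forces $\a^2=1$ at the level of leading coefficients, hence $a=a'$, and then $b'\equiv b\pmod{a}$ from the $T_n$-action together with $|b'|,|b|<|a|=|a'|$ forces $b=b'$, whence $c=c'$ from the discriminant relation.

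The main obstacle I anticipate is the uniqueness argument, specifically the case analysis when $|a|=|c|$, which is the polynomial analogue of the classical ambiguity between $(a,b,c)$ and $(a,-b,c)$ (and $(c,b,a)$) at the boundary of the reduction domain. In the integer case Lagrange handles this by an extra normalization ($b\ge0$ when $a=c$ or when $|2b|=a$); here the correct replacement is less obvious because ``$b\ge 0$'' has no meaning in $k[t]$, and one must instead track leading coefficients carefully. I would handle it by showing that when $|a|=|c|$ the only forms in $\Qc_D$ equivalent to $Q$ with the stated degree bounds and monic $a^*$ are $Q$ itself — using that the transformation $S$ from \eqref{eqf1} swaps $a$ and $c$ and negates $b$, and that $a\succ0$ together with $c\succ0$ (from the proof of Lemma \ref{cl}) pins down which of the two is allowed once we also demand $|b|<|a|$; one checks $(c,-b,a)$ fails the inequality or the leading-coefficient normalization unless it coincides with $(a,b,c)$. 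The remaining steps — the leading-term estimates for $|Q(x,y)|$ and the reduction to $n=0$ — are routine once the $|a|=|c|$ case is isolated, and I would present them compactly rather than grinding through every subcase.
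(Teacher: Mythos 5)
Your uniqueness argument takes a genuinely different route from the paper's, and it can be made to work, but two points need repair. The paper avoids all leading-term analysis by completing the square: if $Q'=Q|M$ with $M=\left(\begin{smallmatrix}m&r\\n&s\end{smallmatrix}\right)$, then $aa'=(am+bn)^2-Dn^2$; since $(am+bn)^2$ has even degree while $Dn^2$ has odd degree when $n\ne0$, there is no cancellation, so $|aa'|\ge|D|$, contradicting $|a|+|a'|<|D|$ and forcing $n=0$ in one line. Your substitute --- that a reduced form cannot properly represent anything of small degree unless $y=0$ --- is the right idea, but the inequality you state, $|Q(x,y)|\ge|a|$ for $y\ne0$, is too weak to conclude $n=0$: a priori $a'=Q(m,n)$ could satisfy $|a|\le|a'|<\tfrac12|D|$ with $n\ne0$. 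What the leading-term analysis actually yields (and what you need) is $|Q(x,y)|\ge|c|>\tfrac12|D|$ whenever $y\ne0$: from $|b|<|a|<\tfrac12|D|$ one gets $2|b|<|a|+|c|$, hence $|D|=|a|+|c|$ and $|c|=|D|-|a|>\tfrac12|D|$; the cross term $2bxy$ never dominates, and $|a|+2|x|=|c|+2|y|$ is impossible because $|a|+|c|=|D|$ is odd, so $ax^2$ and $cy^2$ cannot cancel and $|Q(x,y)|=\max(|a|+2|x|,|c|+2|y|)\ge|c|$. This same parity observation disposes of what you flag as the main obstacle: $|a|=|c|$ can never occur for a reduced form, so the classical boundary ambiguity between $(a,b,c)$ and $(c,-b,a)$ is vacuous here and needs no special normalization --- your planned case analysis is chasing a case that does not exist. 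Once $n=0$, your endgame ($a'=\a^2a$, monicity of $a^*$ and $a'^*$ forcing the leading coefficients to agree, then $b'\equiv b\pmod a$ with $|b|,|b'|<|a|$ giving $r=0$) matches the paper's, and your existence argument is essentially the paper's.
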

\begin{proof}
The existence follows from Lemmas  \ref{repp} and \ref{cl},  after applying the appropriate $C_\a$ from (\ref{eqf1}) to achieve the monic condition. Here we use that $|D|$ is odd. 

For uniqueness, suppose that  $Q'=(a',b',c')\in C$ has $|b'|<|a'|<\tfrac{1}{2}|D|$ with $\pm a'$ monic.  
By completing the square, \[
aa'=(am+bn)^2-Dn^2,
\]
where $Q'=Q|M$ with
$M=\left( \begin{smallmatrix}m& r\\n&s \end{smallmatrix}\right)\in \mathrm{SL}_2(k[t])$. 
Therefore \[|D|>|a|+|a'|=|(am+bn)^2-Dn^2|.\]
This implies that $n=0$ and so from the first equation of (\ref{eq00}) we have that $am^2=a'.$ By symmetry we  deduce that $m=s=1$ and $a=a'$.   The second equation of (\ref{eq00}) yields
$b'=b+ra$, which implies that $r=0$ since \[|b'-b|<|a'|=|a|.\] Thus $Q=Q'.$
\end{proof}

\subsection*{Class group and ambiguous classes }

A straightforward modification  of Dirichlet and Dedekind's  \cite{Dir2} version of Gaussian composition can be defined for classes  in $\mathcal{C}_D$, making it into an abelian group.
We do, however, have to account for our condition on $(a,b,c)\in \Qc_D$ that $a\succ0$, but this leads to no problem since if $a\succ0$ and $a'\succ0$ then $aa'\succ0$.
Say  two forms $Q=(a,b,c),Q'=(a',b',c')\in \Qc_D $ are {\it concordant} if
\[
\gcd(a,a', b+b')=1.
\]
By Lemma \ref{repp}, any pair of classes will contain a pair of concordant forms. 
The following result can be proven by adapting \S 145-\S148 of  \cite{Dir2} (see also  \cite[pp.129--132]{Ven}). Use is made of  Lemma \ref{gl} to show that the product  of classes is well-defined.

\begin{prop}\label{clg}
Let $D(t)\in k[t]$ be negative,  square-free with $|D|$ odd and   $C,C'\in \Cc_D$.  

\bigskip
i) If $(a,b,c)\in C$ and $(a',b',c')\in C'$ are concordant,  there exist  $e,f\in k[t]$ such that  $(a,e,a'f)\in C$ and $(a',e,af)\in C'$.

\bigskip
ii) The class $C''\in \Cc_D$ of  $(aa',e,f)$ is well-defined and makes $\Cc_D$ into an abelian group through $C\otimes C'=C'',$
where the  identity is the class containing $(1,0,-D)$ and the inverse $C^{-1}$ of the class $C$ is the class of $(a,-b,c)$.

\end{prop}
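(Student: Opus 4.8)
The plan is to reproduce, in the polynomial setting, the classical Dirichlet--Dedekind construction of Gaussian composition, working out the few places where the condition $a\succ0$ and the degree hypotheses on $D$ enter. Throughout I would lean on Lemma~\ref{gl} (Dirichlet's equivalence criterion over $k[t]$) to verify that constructions only depend on classes, and on Lemma~\ref{repp}, which supplies the concordant representatives needed to run the argument.

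\textbf{Step i: producing the ``united'' forms.} Given concordant $(a,b,c)\in C$ and $(a',b',c')\in C'$, so that $\gcd(a,a',b+b')=1$, I would first choose $\beta\in k[t]$ with $\beta\equiv b\pmod a$, $\beta\equiv b'\pmod{a'}$, $\beta^2\equiv D\pmod{aa'}$; such $\beta$ exists by CRT together with a short computation using $b^2\equiv D\pmod a$, $b'^2\equiv D\pmod{a'}$ and the coprimality $\gcd(a,a',b+b')=1$ (this is the standard ``concordant'' lemma, exactly as in \cite[\S145]{Dir2}). Setting $e=\beta$ and $f=(e^2-D)/(aa')\in k[t]$, the forms $(a,e,a'f)$ and $(a',e,af)$ have discriminant $e^2-aa'f=D$, and by construction $e\equiv b\pmod a$, $e\equiv b'\pmod{a'}$, so applying a suitable $T_n$ from (\ref{eqf1}) shows $(a,e,a'f)\sim(a,b,c)$ and $(a',e,af)\sim(a',b',c')$; note $a\succ0$, $a'\succ0$ are preserved, so these lie in $\Qc_D$, hence in $C$ and $C'$ respectively.

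\textbf{Step ii: the group law.} With $e,f$ as above, set $C''$ to be the class of $(aa',e,f)$. Since $a\succ0$ and $a'\succ0$ give $aa'\succ0$, this form lies in $\Qc_D$, and $e^2-aa'f=D$ confirms the discriminant. The substantive point is well-definedness: I would show that replacing $(a,b,c)$ and $(a',b',c')$ by equivalent concordant forms (or choosing a different valid $\beta$, which differs by a multiple of $aa'$ and hence alters $(aa',e,f)$ by a $T_n$) changes $(aa',e,f)$ only within its $\mathrm{SL}_2(k[t])$-class; here Lemma~\ref{gl} is the engine, used exactly as in \cite[\S146--147]{Dir2}. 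Commutativity is immediate from the symmetry of the construction in the two forms. For the identity: taking $(a',b',c')=(1,0,-D)$, any $(a,b,c)$ is already concordant with it, and the construction returns $(a,b,c)$ itself, so the class of $(1,0,-D)$ is a two-sided unit. For inverses: $(a,b,c)$ and $(a,-b,c)|S = \dots$, more directly, $(a,b,c)$ and $(a,-b,c)$ composed (they are concordant since $\gcd(a,a,0)=a$ fails, so first replace one by an equivalent form $(c,-b,a)$ via $S$, noting $\gcd(a,c,b-b)=\gcd(a,c)$; but $\gcd(a,c)\mid D$ and $D$ square-free forces $\gcd(a,c)\mid 1$ unless a prime divides both, handled by Lemma~\ref{repp}(iii) to pick a better representative) yields a form with first coefficient $ac = b^2-D \equiv$, and a short computation identifies the result with a form equivalent to $(1,0,-D)$. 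Associativity is the one genuinely laborious verification: I would follow \cite[\S148]{Dir2}, composing three concordant forms $(a_1,e,a_2a_3 f)$, $(a_2,e,a_1a_3 f)$, $(a_3,e,a_1a_2 f)$ simultaneously over a common $e$ and $f=(e^2-D)/(a_1a_2a_3)$, so that both $(C_1\otimes C_2)\otimes C_3$ and $C_1\otimes(C_2\otimes C_3)$ are visibly represented by $(a_1a_2a_3,e,f)$.

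\textbf{Main obstacle.} The only real difficulty is bookkeeping rather than ideas: establishing well-definedness and associativity requires carefully tracking the transforming matrices through (\ref{eq00}) and (\ref{eq1}), and at a couple of points one must invoke Lemma~\ref{repp}(iii) to replace a representative by one whose leading coefficient is coprime to a prescribed polynomial so that the relevant gcds behave. The degree and positivity hypotheses on $D$ are used only lightly here (to know $\Qc_D\neq\emptyset$ and that the class set $\Cc_D$ is the right object, via Lemma~\ref{cl}), so the bulk of the proof is a transcription of \cite[\S145--148]{Dir2} with $\Z$ replaced by $k[t]$, $|\cdot|$ playing the role of the archimedean absolute value, and the sign condition $\succ0$ replacing positive-definiteness; I would present it at that level of detail, citing \cite{Dir2} and \cite[pp.129--132]{Ven} for the computations that are word-for-word the same.
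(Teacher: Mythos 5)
Your proposal follows essentially the same route as the paper, which proves Proposition \ref{clg} precisely by adapting Dirichlet--Dedekind \S145--\S148 (and \cite[pp.129--132]{Ven}), using Lemma \ref{gl} for well-definedness of the product, Lemma \ref{repp} to supply concordant representatives, and the observation that $a\succ0$ and $a'\succ0$ give $aa'\succ0$; you have simply written out more of the details than the paper does. The one slip is your claim that $\gcd(a,c)\mid D$ in the inverse verification (false in general, e.g.\ when a prime divides $a$ and $c$ but not $b$): the clean fix is to compose $(a,b,c)$ with $(a,-b,c)|S=(c,b,a)$, for which concordance reads $\gcd(a,c,2b)=1$ and follows from $D$ being square-free, and in any case your fallback via Lemma \ref{repp} repairs it.
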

Letting $Q=(a,e,a'f), Q'=(a',e,af)$ and $Q''=(aa',e,f)$ we have that $Q''$ is directly composed of $Q$ and $Q'$ in the sense of Gauss  \cite[Art. 235]{Ga}:
\[
Q(x,y)Q'(x,y)=Q''(X,Y),\;\;\;\text{where}\;\;(X,Y)=(xx'-fyy',axy'+a'x'y+2eyy').
\]
Together with this and Proposition \ref{clg}, 
arguments  of Gauss \cite[Art. 237--239]{Ga} showing that  composition of classes is well-defined  adapt  to give the following.
\begin{prop}\label{clg2}
\bigskip
Suppose that $Q\in C$ and $Q'\in C'$ where $C,C'\in \Cc_D$. If
\begin{align}\label{gac}
&Q''(X,Y)=Q(x,y)Q'(x',y')\;\;\;\text{with}\\\nonumber(X,Y)=&(a_1xx'+b_1xy'+c_1yx'+d_1yy',a_2xx'+b_2xy'+c_2yx'+d_2yy'),
\end{align}
where  $a_j,b_j,c_j,d_j\in k[t]$ for $j=1,2$ are such that  \[a_1b_2-a_2b_1=Q(1,0)\;\;\text{ and}\;\; a_1c_2-a_2c_1=Q'(1,0),\]
then the class $C''$ of $Q''$ equals $C\otimes C'.$ 
\end{prop}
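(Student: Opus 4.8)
The plan is to reduce Proposition~\ref{clg2} to Proposition~\ref{clg} by showing that any bilinear substitution $(X,Y)$ satisfying the stated determinant conditions \eqref{gac} differs from the ``standard'' Dirichlet composition substitution only by an $\mathrm{SL}_2(k[t])$-change of variables on each of the three forms, and that such changes of variables do not affect the class $C''$. Concretely, I would first invoke Proposition~\ref{clg} to pick concordant representatives $Q=(a,e,a'f)\in C$, $Q'=(a',e,af)\in C'$, whose Dirichlet composite is $Q_0''=(aa',e,f)$, realized by the explicit bilinear map $(X_0,Y_0)=(xx'-fyy',\,axy'+a'x'y+2eyy')$ displayed just before the statement. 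One checks directly that this $(X_0,Y_0)$ satisfies the two determinant identities $a_1b_2-a_2b_1=Q(1,0)=a$ and $a_1c_2-a_2c_1=Q'(1,0)=a'$. So the content of the proposition is: given \emph{any} $Q\sim$ one of these, $Q'\sim$ the other, and any bilinear $(X,Y)$ obeying \eqref{gac}, the resulting $Q''$ lies in the same class as $Q_0''$.

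Next I would carry out the two easy reductions. If $Q_1=Q|M$ and $Q_1'=Q'|M'$ with $M,M'\in\mathrm{SL}_2(k[t])$, then substituting $(x,y)\mapsto (x,y)M^t$ and $(x',y')\mapsto (x',y')M'^t$ into \eqref{gac} produces a new bilinear form in the new variables whose coefficient vectors $(a_j',b_j',c_j',d_j')$ are obtained from the old ones by a fixed linear transformation, and a short computation (this is exactly the $2\times 2$ minor bookkeeping in \cite[Art.~237]{Ga}) shows the determinant conditions transform into the correct ones for $Q_1,Q_1'$, with $Q_1(1,0)=(Q|M)(1,0)$, etc. Hence it suffices to prove the statement for the \emph{specific} concordant pair $Q=(a,e,a'f)$, $Q'=(a',e,af)$. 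Then, given an arbitrary admissible $(X,Y)$ for this pair, I would show there is $N\in\mathrm{SL}_2(k[t])$ with $(X,Y)=(X_0,Y_0)N^t$; equivalently, the $4\times 2$ coefficient matrix of $(X,Y)$ equals that of $(X_0,Y_0)$ right-multiplied by $N^t$. The determinant conditions force the $2\times2$ minor on the $xx'$- and $xy'$-columns of $(X,Y)$ to equal $a=Q(1,0)$, which is the corresponding minor of $(X_0,Y_0)$, and similarly for the $Q'(1,0)$ minor; combined with the identity $Q''(X,Y)=Q(x,y)Q'(x',y')=Q_0''(X_0,Y_0)$ holding as polynomials in $x,y,x',y'$, one deduces that $(X,Y)$ and $(X_0,Y_0)$ represent the ``same'' composition up to a unimodular substitution, so $Q''=Q_0''|N^{-1}\sim Q_0''$, i.e. $C''=C\otimes C'$.

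The main obstacle is the middle step: showing rigorously that the determinant normalization \eqref{gac} pins down the substitution up to $\mathrm{SL}_2(k[t])$. Over $\Z$ this is Gauss's computation in \cite[Art.~235--239]{Ga}, but there the arguments implicitly use ordering of integers and positivity to rule out sign ambiguities and to solve auxiliary linear systems; over $k[t]$ I would instead lean on unique factorization, the division algorithm, and the fact (Lemma~\ref{cl}) that $a,a'\succ0$ so that $aa'\neq 0$ and degrees add, which is what keeps the relevant gcd and divisibility manipulations valid. The key algebraic fact needed is that the identity $Q_0''(X,Y)=aa'\cdot X^2+2eXY+fY^2$ together with $Q(x,y)Q'(x',y')$ being a product of two primitive binary forms forces the $4\times 2$ coefficient matrix of $(X,Y)$, read modulo the obvious one-dimensional family of reparametrizations, to be determined; here primitivity of $Q,Q'$ (automatic since they have square-free discriminant $D$, as noted after \eqref{e2}) replaces the content arguments of the classical proof. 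I expect that once concordance and primitivity are in hand, the polynomial identities are formally identical to Gauss's, so the proof is: \emph{reduce to the standard representatives, then cite that Gauss's Art.~237--239 argument is purely formal in a unique factorization domain of characteristic $\neq 2$}.
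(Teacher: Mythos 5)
Your proposal is correct and follows essentially the same route as the paper: the paper gives no detailed argument for Proposition \ref{clg2} beyond exhibiting the explicit direct composition $(X_0,Y_0)=(xx'-fyy',\,axy'+a'x'y+2eyy')$ of the standard concordant representatives from Proposition \ref{clg} and remarking that Gauss's Art.~237--239 arguments adapt to $k[t]$. Your reduction to those representatives, the minor/determinant bookkeeping, and the observation that Gauss's computations are purely formal in a UFD of characteristic $\neq 2$ (with primitivity supplied by the square-free discriminant) is exactly the intended adaptation.
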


An important consequence of Proposition \ref{clg} is the following result. 
\begin{prop}\label{pro}Suppose that $Q=(a,b,c)\in C$ and $Q'=(a',b',c')\in C'$, where $C,C'\in \Cc_D$ are not necessarily distinct. 


i) If $a$ and $a'$ are coprime, then any form in  $C\otimes C'$ will properly represent $aa'.$

ii) If  $a$ is prime to $D$, then any form in $C\otimes C$ will properly represent $a^2.$
\end{prop}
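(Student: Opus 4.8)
The plan is to deduce both statements directly from Proposition~\ref{clg}, which already furnishes concordant representatives whose composition has a prescribed first coefficient. First I would treat part i). Given $Q=(a,b,c)\in C$ and $Q'=(a',b',c')\in C'$ with $\gcd(a,a')=1$, I note that $\gcd(a,a',b+b')$ divides $\gcd(a,a')=1$, so $Q$ and $Q'$ are automatically concordant. By Proposition~\ref{clg}i) there exist $e,f\in k[t]$ with $(a,e,a'f)\in C$ and $(a',e,af)\in C'$, and by Proposition~\ref{clg}ii) the form $(aa',e,f)$ lies in $C\otimes C'$. Since $(aa',e,f)(1,0)=aa'$ and the representation of the first coefficient by $(1,0)$ is always proper, the form $(aa',e,f)$ properly represents $aa'$; because equivalent forms properly represent the same polynomials (as recorded just after Lemma~\ref{sl}'s preliminaries), every form in $C\otimes C'$ properly represents $aa'$.

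For part ii), take $Q'=Q=(a,b,c)\in C$ with $\gcd(a,D)=1$. The obstruction here is concordance: I need representatives $(a_1,b_1,c_1)$ and $(a_2,b_2,c_2)$ of $C$ with $\gcd(a_1,a_2,b_1+b_2)=1$, and simply taking both equal to $(a,b,c)$ gives $\gcd(a,a,2b)=\gcd(a,2b)$, which need not be $1$. So I would first replace the second copy of $Q$ by an equivalent form representing a suitable polynomial coprime to $a$. Concretely, by Lemma~\ref{repp}iii) the class $C$ contains a form $Q'=(a',b',c')$ with $a'$ coprime to $a$; then $\gcd(a,a',b+b')$ divides $\gcd(a,a')=1$, so $Q$ and $Q'$ are concordant, and Proposition~\ref{clg} applies to give a form $(aa',e,f)$ in $C\otimes C=C\otimes C'$ properly representing $aa'$. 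To finish I must convert "properly represents $aa'$" into "properly represents $a^2$": since $\gcd(a,a')=1$ and $aa'$ is properly represented, the hypothesis $\gcd(a,D)=1$ lets me show $a^2$ is also properly represented by any form in $C\otimes C$, either by composing once more with $C^{-1}$ to absorb the extra factor $a'$, or by a direct change of variables argument using that $a'$ is itself properly represented by a form in $C$ and invoking Proposition~\ref{clg2}.

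I expect the concordance/coprimality bookkeeping in part ii) to be the main obstacle; the cleanest route may be to argue as follows. Pick $(a',b',c')\in C$ with $\gcd(a',aD)=1$ using Lemma~\ref{repp}iii). Applying part i) to $C\otimes C$ with the representatives $(a,b,c)$ and $(a',b',c')$ shows every form in $C\otimes C$ properly represents $aa'$. Separately, apply part i) again, composing a form in $C\otimes C$ with the representative $(a',-b',c')\in C^{-1}$: since $C\otimes C\otimes C^{-1}=C$ and $aa'$ appears as a first coefficient coprime to $a'$, one gets that forms in $C$ properly represent $(aa')a'=a a'^2$. Reversing this reasoning---composing $C$ with $C$ and tracking that $a$ is coprime to $D$ hence to the relevant conductors---yields that forms in $C\otimes C$ properly represent $a^2$. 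The residual technical point, namely that all these proper representations genuinely descend to arbitrary forms in the target class, is handled uniformly by the invariance of proper representation under $\mathrm{SL}_2(k[t])$-equivalence noted in the Preliminaries.
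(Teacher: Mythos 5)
Your part i) is exactly the paper's argument: coprimality of $a$ and $a'$ forces $\gcd(a,a',b+b')=1$, so the forms are concordant and Proposition~\ref{clg} hands you $(aa',e,f)\in C\otimes C'$, which properly represents $aa'$ at $(1,0)$; invariance of proper representation under equivalence finishes it. That part is correct.

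In part ii) you have missed the one observation that makes the statement a one-liner, and your substitute argument does not close the gap. You claim that taking both representatives equal to $(a,b,c)$ gives $\gcd(a,a,2b)=\gcd(a,2b)$, ``which need not be $1$'' --- but under the hypothesis $\gcd(a,D)=1$ it \emph{must} be $1$: any prime dividing both $a$ and $b$ divides $b^2-ac=D$, contradicting $\gcd(a,D)=1$, so $\gcd(a,b)=1$; and since $k$ is a number field, $2$ is a unit, so $\gcd(a,2b)=1$. Hence $Q$ is concordant with itself and Proposition~\ref{clg} directly produces $(a^2,e,f)\in C\otimes C$, which properly represents $a^2$. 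This is the paper's entire proof of ii). Your detour --- choosing $(a',b',c')\in C$ with $a'$ coprime to $aD$, showing $C\otimes C$ properly represents $aa'$, and then trying to ``convert'' this into a proper representation of $a^2$ --- is not a proof as written: a form that properly represents $aa'$ need not properly represent $a^2$, composing with $C^{-1}$ lands you in the class $C$ rather than $C\otimes C$, and the concluding ``reversing this reasoning'' step is an assertion, not an argument. The fix is simply to use the hypothesis $\gcd(a,D)=1$ where it is needed, namely to establish $\gcd(a,b)=1$ and hence self-concordance.
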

\begin{proof}
For i), observe that $Q$ and $Q'$ are then concordant. For ii), if $a$ is prime to $D$ then it is prime to $b$ and so $Q$ and $Q$ are concordant.
\end{proof}

The following easily proven result shows that for our purposes we may assume if convenient that a  negative $D$ is monic.
\begin{lemma}\label{isom}
For any fixed  $\a \in k^{\sbullet},$ the map  $(a,b,c)\mapsto (a, \a b, \a^2c) $ defines a group isomorphism from $\Cc_D$ to $\Cc_{\a^2 D}$.
\end{lemma}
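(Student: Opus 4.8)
\textbf{Proof proposal for Lemma \ref{isom}.}

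The plan is to check directly that the map $\phi_\a\colon (a,b,c)\mapsto (a,\a b,\a^2 c)$ sends $\Qc_D$ to $\Qc_{\a^2 D}$, descends to a well-defined map on $\mathrm{SL}_2(k[t])$-classes, is a bijection, and respects the composition law. First I would verify the discriminant: if $D=b^2-ac$ then the discriminant of $(a,\a b,\a^2 c)$ is $(\a b)^2 - a(\a^2 c) = \a^2(b^2-ac) = \a^2 D$, as needed. Since $\a\in k^{\sbullet}$, multiplication by $\a$ does not change degrees, so $|D|$ odd implies $|\a^2 D|$ odd, and the sign condition $a\succ 0$ is preserved because the first coefficient is untouched; hence $\phi_\a$ maps $\Qc_D$ into $\Qc_{\a^2 D}$. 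Its inverse is evidently $\phi_{\a^{-1}}$ (up to the identification $\a^{-2}(\a^2 D)=D$), so $\phi_\a$ is a bijection on the level of forms.

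Next I would check compatibility with the $\mathrm{SL}_2(k[t])$-action. Using Lemma \ref{sl}, it suffices to check this on the generators $C_\b$, $T_n$, $S$, via the formulas (\ref{eqf1}). For $S$: $\phi_\a\big((a,b,c)|S\big)=\phi_\a(c,-b,a)=(c,-\a b,\a^2 a)$, while $\phi_\a(a,b,c)|S = (a,\a b,\a^2 c)|S = (\a^2 c,-\a b, a)$. These are not literally equal, but they differ by $|C_{\a}$ from (\ref{eqf1}): $(\a^2 c,-\a b,a)|C_{\a^{-1}} = (\a^{-2}\cdot\a^2 c,-\a b,\a^2 a)=(c,-\a b,\a^2 a)$. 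So the two outputs are $\mathrm{SL}_2(k[t])$-equivalent, which is all that is needed for the induced map on classes to be well-defined. Similar short computations handle $T_n$ (where the two outputs agree exactly) and $C_\b$ (where again they agree exactly, since $C_\b$ and multiplication by $\a$ act on disjoint slots). Running the same argument with $\phi_{\a^{-1}}$ shows the induced map $\Cc_D\to\Cc_{\a^2 D}$ is a bijection.

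Finally I would check that $\phi_\a$ is a group homomorphism for the composition $\otimes$ of Proposition \ref{clg}. If $Q=(a,e,a'f)$, $Q'=(a',e,af)$ are concordant representatives with product $Q''=(aa',e,f)$, then applying $\phi_\a$ gives $(a,\a e,\a^2 a'f)$ and $(a',\a e,\a^2 a f)$; these are again concordant (the gcd condition $\gcd(a,a',2\a e)=1$ follows from $\gcd(a,a',2e)=1$ since $\a$ is a unit), and a direct inspection shows they are of the shape $(a,\tilde e,a'\tilde f)$, $(a',\tilde e,a\tilde f)$ with $\tilde e=\a e$, $\tilde f=\a^2 f$, whose Dirichlet-Dedekind composite is $(aa',\a e,\a^2 f)=\phi_\a(Q'')$. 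Hence $\phi_\a(C\otimes C')=\phi_\a(C)\otimes\phi_\a(C')$, and the identity class $(1,0,-D)$ maps to $(1,0,-\a^2 D)$, the identity of $\Cc_{\a^2 D}$. The only mildly delicate point is the bookkeeping in the $S$-generator check, where one must notice that the discrepancy is exactly absorbed by a diagonal matrix $C_{\a^{\pm1}}\in\mathrm{SL}_2(k[t])$; everything else is a routine unit-scaling verification.
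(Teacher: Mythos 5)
Your proof is correct; the paper itself offers no argument for this lemma (it is labelled ``easily proven''), and your verification --- discriminant and positivity bookkeeping, descent to classes via the generators of Lemma \ref{sl}, and compatibility with the Dirichlet--Dedekind composite of Proposition \ref{clg} --- is exactly the check that is being left to the reader. Two small remarks. First, your claim that the outputs ``agree exactly'' for $T_n$ is not quite right: writing $\phi_\a(a,b,c)=(a,\a b,\a^2 c)$, one has $\phi_\a\big(Q|T_n\big)=\phi_\a(Q)|T_{\a n}$ rather than $\phi_\a(Q)|T_n$; this is harmless, since all you need is that the image of $Q|T_n$ is $\mathrm{SL}_2(k[t])$-equivalent to the image of $Q$ by a matrix depending only on the generator, but it should be stated accurately. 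Second, the entire generator-by-generator check can be bypassed: $\phi_\a(Q)=NQN^t$ for $N=\left(\begin{smallmatrix}1&0\\0&\a\end{smallmatrix}\right)$, so $\phi_\a(Q|M)=\phi_\a(Q)\,|\,(NMN^{-1})$, and conjugation by the diagonal matrix $N$ preserves $\mathrm{SL}_2(k[t])$ (entries are merely scaled by $\a^{\pm1}\in k^{\sbullet}$). This one-line observation gives well-definedness and, applied to $\phi_{\a^{-1}}$, bijectivity on classes, leaving only your (correct) computation that concordance and the composite $(aa',e,f)\mapsto(aa',\a e,\a^2 f)$ are respected.
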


By an {ambiguous form} is  meant  a form of type $(a,0,c).$ Note: over $k[t]$ we need not  consider non-diagonal ambiguous forms, which are necessary over $\Z$.
A class $C\in \Cc_D$ is called {\it ambiguous} if it has order 1 or 2.  

\begin{lemma}\label{cant}
A class of $\Cc_D$ is ambiguous if and only if it contains an ambiguous form.
\end{lemma}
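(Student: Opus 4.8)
The statement to prove is Lemma \ref{cant}: a class $C \in \Cc_D$ is ambiguous (order $1$ or $2$) if and only if it contains an ambiguous form $(a,0,c)$. One direction is immediate: if $C$ contains $(a,0,c)$, then by Proposition \ref{clg}(ii) the inverse $C^{-1}$ is the class of $(a,0,c)$ as well (since $-b = 0$), so $C = C^{-1}$ and $C^2$ is the identity, hence $C$ is ambiguous. The substance is the converse.

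\textbf{Converse: the main argument.} Suppose $C$ is ambiguous, so $C = C^{-1}$. By Proposition \ref{min} (Lagrangian reduction), choose the unique reduced form $Q = (a,b,c) \in C$ with $|b| < |a| < \tfrac12|D|$ and $a^*$ monic. Then $C^{-1}$ is the class of $(a,-b,c)$ by Proposition \ref{clg}(ii). Now $(a,-b,c)$ also satisfies $|-b| < |a| < \tfrac12|D|$ with $a^*$ monic, so it too is a reduced form in its class. Since $C = C^{-1}$, the uniqueness clause of Proposition \ref{min} forces $(a,b,c) = (a,-b,c)$, i.e. $b = -b$, hence $b = 0$ (characteristic not $2$). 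Thus $Q = (a,0,c)$ is ambiguous, as desired.

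\textbf{Where the difficulty lies.} The only subtle point is making sure the inverse class really is represented by $(a,-b,c)$ with the same normalization — that $a^*$ is still monic and the degree bounds are unchanged — so that the uniqueness in Proposition \ref{min} applies verbatim. Both hold trivially because negating the middle coefficient changes neither $a$ nor $c$ nor any degree, and $(a,-b,c)$ has the same discriminant $b^2 - ac = D$ and lies in $\Qc_D$ since $a \succ 0$ is unaffected. I would also briefly note that $\Cc_D$ being a group (Proposition \ref{clg}) is what lets us speak of $C^{-1}$ and interpret "order $1$ or $2$" as $C = C^{-1}$. No heavy machinery beyond the reduction theorem and the description of inverses is needed; the proof is essentially a one-line application of uniqueness of reduced forms once the inverse is identified.
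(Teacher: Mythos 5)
Your proof is correct, and it takes a genuinely different route from the paper's. The paper adapts an explicit matrix argument of Georg Cantor: from the equivalence $(a,b,c)\sim(a,-b,c)$ it extracts, via Lemma \ref{gl}, a relation $ms-nr=1$ with $m=s$, observes that $(r,m,n)$ is then a form of discriminant $1$, parametrizes it as $(0,-1,0)|N$, and uses the entries of $N$ to build an explicit $N^t\in\mathrm{SL}_2(k[t])$ carrying $(a,b,c)$ to a diagonal form. You instead apply the uniqueness clause of the reduction theorem (Proposition \ref{min}): the reduced representative of $C$ is $(a,b,c)$, the reduced representative of $C^{-1}$ is $(a,-b,c)$ (same $a$, same degree bounds, same monicity of $a^*$, same discriminant), so $C=C^{-1}$ forces $b=-b$ and hence $b=0$. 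This is shorter and, I think, more illuminating: it makes visible exactly why the paper can dispense with non-diagonal ambiguous forms, which are unavoidable over $\Z$ --- there the reduction inequalities have borderline cases $|b|=a$ and $a=c$ in which $(a,b,c)$ and $(a,-b,c)$ are properly equivalent without being equal, whereas here the strict degree inequality $|b|<|a|$ leaves no such room. What the paper's computational route buys is an explicit transformation to an ambiguous form and independence from the full strength of the uniqueness statement; what yours buys is brevity and a conceptual explanation, at the cost of leaning on Proposition \ref{min} (which is proved earlier, so there is no circularity). Your handling of the easy direction and of the well-definedness of $C^{-1}$ via Proposition \ref{clg} is also fine.
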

\begin{proof}
Clearly the class of $(a,0,c)$ has order 1 or 2.  For the converse, we adapt an argument of Georg Cantor \cite{Cant} from the classical case.
Suppose that the square of the class of $Q=(a,b,c)$ is the identity.  What is the same,  $(a,b,c)$ is equivalent to $(a,-b,c).$
By Lemma \ref{gl} there are $m,n,r,s\in k[t]$ such that $ms-nr=1$ and where 
\begin{equation}\label{need}
a=am^2+2bmn+cn^2,\;\;\;am=as,\;\;\;\;\;-ar=2bm+cn.
\end{equation}
It follows that $m=s$ so from  $ms-nr=1$ we see that $Q'=(r,m,n)$ has discriminant 1.  By Lemma \ref{repp}, $Q'$ is equivalent to $(\a,0,-\a^{-1})$ for some $\a\in k^{\sbullet}$, which is equivalent to \[(0,-1,0)=(\a,0,-\a^{-1})|\left( \begin{smallmatrix}1& -1/(2\a)\\\a&1/2 \end{smallmatrix}\right).\]
Hence there  will exist $N=\left( \begin{smallmatrix}a_1& a_2\\a_3&a_4 \end{smallmatrix}\right)\in \mathrm{SL}_2(k[t])$ with $(0,-1,0)|N=(r,m,n)$. Thus 
\[
r=-2a_1a_3\;\;\;m=-a_1a_4-a_2a_3\;\;\;n=-2a_2a_4.
\]
Finally,  using these identities with the middle equation of (\ref{eq00}) we get \[(a,b,c)|N^t=(*,-2bm-cn-ar,*),\]
so the last equation of (\ref{need})  shows that $(a,b,c)|N^t=(*,0,*)$  is ambiguous. 
\end{proof}
For $n\in k[t]$, let $\omega(n)$ be the number of distinct primes dividing $n$.

\begin{prop}\label{amb}
The subgroup of $\Cc_D$ of elements of order at most two has order $2^{\omega(D)-1}$.
\end{prop}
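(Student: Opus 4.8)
The plan is to identify the 2-torsion subgroup of $\Cc_D$ with ambiguous classes and then count ambiguous classes directly. By Lemma \ref{cant}, every element of order at most two is represented by some ambiguous form $(a,0,c)$ with $ac=-D$. Since $D$ is square-free, such a factorization amounts to writing $-D=ac$ with $\gcd(a,c)=1$, i.e. distributing the $\omega(D)$ prime factors of $D$ between $a$ and $c$; after normalizing with the operators $C_\a$ from (\ref{eqf1}) we may take $a^*$ (hence $c^*$) monic, so there are exactly $2^{\omega(D)}$ such ambiguous forms in $\Qc_D$. (One should check the sign/positivity bookkeeping: the condition $a\succ0$ together with $D\prec0$ and $|D|$ odd forces, as in the proof of Lemma \ref{cl}, the complementary factor $c$ to be positive as well, so there is no further constraint on which subset of primes goes into $a$.)

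The next step is to determine when two such ambiguous forms lie in the same class, i.e. to compute the map from the $2^{\omega(D)}$ ambiguous forms to $\Cc_D$ and show it is exactly two-to-one onto its image, which is the full 2-torsion. First, $(a,0,c)$ and $(c,0,a)$ are equivalent via $S$ by (\ref{eqf1}), so the forms pair up, giving at most $2^{\omega(D)-1}$ ambiguous classes. Conversely I must show two ambiguous forms $(a_1,0,c_1)$ and $(a_2,0,c_2)$ with $\{a_1^*,c_1^*\}\neq\{a_2^*,c_2^*\}$ are inequivalent; the cleanest route is to use composition (Proposition \ref{clg}): the class of $(a,0,c)$ is determined by the coset of the subgroup of $\Cc_D$ generated by... — more concretely, one checks via Proposition \ref{clg2} or the explicit composition law that composing the classes of $(a_1,0,c_1)$ and $(a_2,0,c_2)$ yields the class of the ambiguous form whose $a$-part is (the monic normalization of) $\gcd$-adjusted product corresponding to the symmetric difference of the prime sets, so the map from subsets of the $\omega(D)$ primes, modulo complementation, into $\Cc_D$ is a group homomorphism from $(\Z/2)^{\omega(D)-1}$; it remains to see it is injective. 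Injectivity follows because the reduced form in each ambiguous class, guaranteed by Proposition \ref{min}, has its $a$-coefficient determined up to the $a\leftrightarrow c$ swap, and $|a|<\tfrac12|D|$ together with $ac=-D$ forces exactly one of $\{a^*,c^*\}$ to be the monic factor of degree $<\tfrac12|D|$ — wait, both could have degree near $\tfrac12|D|$, so instead one argues: if $(a_1,0,c_1)\sim(a_2,0,c_2)$ then $a_2$ is properly represented by $(a_1,0,c_1)$, so $a_2=a_1x^2+c_1y^2$ for coprime $x,y$; reducing mod any prime $p\mid a_1$ shows $a_2\equiv c_1 y^2$, forcing the prime-set identity up to swap.

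Having established these two facts — there are $2^{\omega(D)}$ normalized ambiguous forms, and the equivalence classes among them number exactly $2^{\omega(D)-1}$, and by Lemma \ref{cant} these classes are precisely the elements of $\Cc_D$ of order dividing $2$ — the proposition follows immediately. The main obstacle I expect is the injectivity step, i.e. ruling out accidental equivalences between ambiguous forms coming from genuinely different factorizations of $-D$; the congruence argument sketched above (localizing at primes dividing the coefficients, exactly as in part iii of Lemma \ref{repp}) should handle it, but one must be careful that $x,y$ can be assumed coprime and that the positivity/monic normalization has been pinned down so that the factorization is unique rather than merely unique up to the known $S$-symmetry. A secondary technical point is confirming that every factorization $-D=ac$ with $\gcd(a,c)=1$ and the normalization $a^*$ monic actually gives a form in $\Qc_D$, i.e. that $a\succ0$ can always be arranged by a choice of $\a\in k^\bullet$ in $C_\a$, which is where $|D|$ odd is used.
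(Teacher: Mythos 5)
Your overall strategy is the paper's: reduce to counting ambiguous forms via Lemma \ref{cant}, note that $(a,0,c)$ and $(c,0,a)$ are $S$-equivalent, and then show that no further collapsing occurs. The count of $2^{\omega(D)}$ normalized ambiguous forms and the positivity bookkeeping are fine. The gap is in the injectivity step. The congruence argument you finally settle on does not work: from $a_2=a_1x^2+c_1y^2$ and $p\mid a_1$ you get $a_2\equiv c_1y^2\pmod{p}$, which in the case $p\nmid a_2$ says only that $a_2c_1$ is a square in $F_p$ --- a quadratic-residue condition, not a divisibility condition --- so it does not ``force the prime-set identity up to swap.'' Converting such residue conditions into a contradiction is essentially genus theory or reciprocity, which is not yet available at this point in the paper (and would be circular, since the genus and reciprocity results later depend on this proposition).

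Ironically, you had the correct argument in hand and talked yourself out of it. Since $|a|+|c|=|D|$ is odd, $|a|\neq|c|$, so exactly one of the two $S$-related forms satisfies $|a|<|c|$, i.e.\ $|a|<\tfrac12|D|$; your worry that ``both could have degree near $\tfrac12|D|$'' cannot occur. An ambiguous form $(a,0,c)$ with $|a|<|c|$ and $a^*$ monic is then exactly the reduced representative of Proposition \ref{min} (note that $|0|<0\le|a|$ under the paper's degree convention, so the condition $|b|<|a|$ is automatic), and the \emph{uniqueness} in that proposition immediately shows that distinct such forms lie in distinct classes. That is the paper's entire proof: each ambiguous class contains a unique ambiguous form $(a,0,c)$ with $|a|<|c|$ and $a^*$ monic, and there are $2^{\omega(D)-1}$ of these. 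With that substitution for your injectivity step, the argument is complete; the composition and congruence machinery you invoke is unnecessary.
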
 

\begin{proof}
 By Proposition \ref{min} and Lemma \ref{cant}, an ambiguous class of $\Cc_D$ contains a unique ambiguous form $(a,0,c)$ where $|a|<|c|$ and $ a^*$ is monic. The number of such forms is  $2^{\omega(D)-1}$.
\end{proof}

 \subsection*{Principal genus theorem}

 In this subsection I will prove the $k[t]$ version of the principal genus theorem.  
We will use a  Hilbert-type Steinberg symbol to define ``genus characters.''
 A reference with proofs of those properties we give is \cite[p.98]{Mil2}.
 The field extension of $k$ determined by  $F_p= k[t]/(p)$  for a prime $p\in k[t]$, has degree $|p|$ and can be identified with $k(\a)$, where $\a\in\bar{k}$ has $p(\a)=0$,
 where $\bar{k}$ is the algebraic closure of $k$.
Define for $a,b \in k(t)^{\sbullet}$
\begin{equation}\label{sy1}
(a,b)_p = (-1)^{v_p(a)v_p(b)}a^{v_p(b)}b^{v_p(a)}\in F_p^{\sbullet}/F_p^{\sbullet 2},
\end{equation}
where $v_p(a)$ is determined by $a=p^{v_p(a)}u$ with $u$ prime to $p$.
Also, set
\begin{equation}\label{sy2}
(a,b)_\infty = (-1)^{|a||b|}a_0^{|b|}b_0^{|a|}\in k^{\sbullet}/k^{\sbullet 2}.
\end{equation}
For any $p$ (inc. $p=\infty$) and  $a,b \in k(t)^{\sbullet}$ the following hold 
\begin{align}
&(a,b)_p=(b,a)_p\\\label{mult}
&(ab,c)_p=(a,c)_p(b,c)_p\\\label{mul}&
(a,1-a)_p=1, \; a\neq1\\\label{sq}&(a,b^2)_p=1.
\end{align}
We also  have the product formula
\begin{equation}\label{pf}
(a,b)_\infty \prod_{p}N_{F_p/k}(a,b)_p=1,
\end{equation}
where  $N_{F_p/k}:F_p\rightarrow k$ is the norm.

For any $p$ including $p=\infty$ define the genus ``character"  $\chi_p: \mathcal{Q}_D\rightarrow F_p^{\sbullet}/F_p^{\sbullet 2}$ by
\begin{equation}\label{gc}\chi_p(Q)=(n,D)_p,\end{equation}
where  $n$ is any polynomial prime to $p$ that is properly represented by $Q$. 
 \begin{lemma}\label{lwd}
The  value of $\chi_p(Q)$ is well-defined; it is independent of the $n$ chosen. In addition,  $\chi_p(Q)=1$ for any $p\nmid d$, including $p=\infty$.
\end{lemma}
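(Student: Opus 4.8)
The plan is to establish two separate claims: (1) that $\chi_p(Q)=(n,D)_p$ does not depend on the choice of $n$ properly represented by $Q$ and prime to $p$; and (2) that $\chi_p(Q)=1$ whenever $p\nmid D$, including $p=\infty$.

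For the well-definedness, first I would reduce to comparing two values $n_1,n_2$ both properly represented by $Q$ and prime to $p$. The key algebraic fact is that if $n_1$ and $n_2$ are both properly represented by a form $Q$ of discriminant $D$, then their product $n_1n_2$ is represented by a form equivalent to $Q^2$ up to the relevant genus data; more precisely, the classical identity (going back to Gauss) says that for a primitive form $Q$, the set of (classes of) polynomials properly represented by $Q$ that are prime to any fixed modulus forms a coset, so $n_1n_2 \equiv \square \pmod{D}$ in the appropriate sense. Concretely: if $Q(x_1,y_1)=n_1$ and $Q(x_2,y_2)=n_2$ with the arguments coprime, then completing the square gives $4n_1 a = (2ax_1+2by_1)^2 - D(2y_1)^2$ hence $n_1 \equiv \square \pmod{p}$ whenever $p\mid D$ — but we need $p$ prime to $D$ too. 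The cleaner route: use Gauss composition (Proposition \ref{clg}) to see that $Q$ represents $n_1$ and $n_2$, so a form in $C\otimes C$ represents $n_1n_2$, and after applying Proposition \ref{pro} and chasing what $(n_1n_2,D)_p$ must be, one gets $(n_1,D)_p(n_2,D)_p = (n_1n_2,D)_p$ is a square in $F_p^\bullet/F_p^{\bullet 2}$. Actually the slickest argument uses only the Steinberg relations directly: I would show $n_1 n_2 \equiv D z^2 \pmod{\text{something}}$ or that $n_1/n_2$ is, modulo squares, in the image of the norm from $F_p(\sqrt D)$, which by the standard property of the Hilbert symbol $(a,b)_p$ (namely $(a,b)_p = 1$ iff $a$ is a norm from $k(t)_p(\sqrt b)$) is exactly the statement $(n_1,D)_p = (n_2,D)_p$. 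I expect \textbf{this step — proving $n_1/n_2$ is a norm from the quadratic extension by $\sqrt D$, i.e. that the represented values of $Q$ lie in a single square class modulo norms — to be the main obstacle}, and I would handle it by the explicit completing-the-square identity: from $Q(x_i,y_i)=n_i$ one has $4an_i = (2ax_i+2by_i)^2 - D(2y_i)^2 = N_{F_p(\sqrt D)/F_p}(\text{element})$ when $D$ is a nonsquare mod $p$ (and when $D$ is a square mod $p$ the symbol $(\cdot,D)_p$ is trivial anyway by \eqref{sq}), and multiplying the two relations shows $n_1 n_2 a^2$ is a norm, whence $(n_1 n_2, D)_p = 1$, i.e. $(n_1,D)_p = (n_2,D)_p$ using bimultiplicativity \eqref{mult} and $(a^2,D)_p=1$.

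For the second claim, suppose $p\nmid D$ (finite $p$). Since $D$ is square-free, $p\nmid D$ means $v_p(D)=0$, so $D$ is a $p$-adic unit. By Lemma \ref{repp}(iii) I may choose $n$ properly represented by $Q$ and prime to $p$, so $v_p(n)=0$ as well. Then from the defining formula \eqref{sy1}, $(n,D)_p = (-1)^{v_p(n)v_p(D)} n^{v_p(D)} D^{v_p(n)} = (-1)^0 \cdot n^0 \cdot D^0 = 1$ in $F_p^\bullet/F_p^{\bullet 2}$. For $p=\infty$: here I use that $D$ is negative of odd degree, and I can choose $Q = (a,b,c) \in \Qc_D$ with $|a| < \tfrac12|D|$, $a\succ 0$, representing $n=a$ properly; if additionally $\deg a$ is even then $|a|$ and $|D|$... — more carefully, I would invoke that the $\infty$-symbol \eqref{sy2} is $(-1)^{|a||D|} a_0^{|D|} D_0^{|a|}$, and choose $n$ with $|n|$ having the right parity: by Lemma \ref{repp}(ii) $Q$ represents some $n$ with $|n|\le \tfrac12|D| < |D|$, and in fact $Q$ represents a positive $n$ (Lemma \ref{cl}), and I can arrange $|n|$ even (e.g. $n=Q(1,0)=a$ after reduction, or $n=c$, choosing whichever has even degree — note $|D|$ odd forces $a,c$ of opposite parity in the leading-cancellation analysis of Lemma \ref{cl}'s proof, so one of them has even degree). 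With $|n|$ even, $(-1)^{|n||D|}=1$, $n_0^{|D|} = n_0$ is a square since $n\succ 0$ and $|n|$ even means the leading coefficient $n_0$ itself is a square in $k^\bullet$, and $D_0^{|n|}$ is a square since $|n|$ is even. Hence $(n,D)_\infty = 1$.

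Finally I would assemble: the well-definedness from the first part makes $\chi_p$ a genuine function on $\Qc_D$, and the computations above give $\chi_p \equiv 1$ for $p\nmid D$. One bookkeeping point worth stating explicitly in the write-up: for the well-definedness argument to be nonvacuous we need $Q$ to properly represent \emph{at least one} $n$ prime to $p$, which is exactly Lemma \ref{repp}(iii), and we need \emph{any two} such to give the same symbol, which is the norm argument; both $n_i$ being prime to $p$ is used to keep $v_p(n_i)=0$ so that the $p$-part contributes nothing spurious when $p\mid D$. I would present the norm identity as the crux and keep the rest as short verifications.
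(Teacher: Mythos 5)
Your overall mechanism (the two represented values lie in one class modulo values of the norm form $u^2-Dv^2$, so the symbols agree by bimultiplicativity) is the same circle of ideas as the paper's, but the crux step as you wrote it has a genuine gap. You justify ``$a^2n_1n_2$ is a norm, whence $(a^2n_1n_2,D)_p=1$'' by the ``standard property of the Hilbert symbol'' that $(a,b)_p=1$ iff $a$ is a norm from $k(t)_p(\sqrt{b})$. That criterion belongs to the $\pm1$-valued Hilbert symbol of a local field with finite residue field; the symbol (\ref{sy1}) used here is the tame residue symbol with values in $F_p^{\sbullet}/F_p^{\sbullet 2}$, where $F_p$ is a number field, and the ``only if'' direction is simply false in this setting (e.g.\ with residue field $\Q$ and $\bar D=-1$, the unit $3$ has trivial symbol but is not a norm from the unramified quadratic extension). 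The direction you actually need --- a nonzero value of $u^2-Dv^2$ pairs trivially with $D$ at every $p$ --- is true but is not among the listed properties and must be proved: either formally from the Steinberg relations, writing $(u^2-Dv^2,D)_p=\big(1-D(v/u)^2,\,D(v/u)^2\big)_p=1$ by (\ref{mul}), (\ref{sq}), (\ref{mult}) (with the derived identity $(-D,D)_p=1$ covering $u=0$), or by a valuation computation. If instead you try to finish by reducing mod $p$ for $p\mid D$, your version hits a second snag the paper's does not: you completed the square against the outer coefficient, so you only learn $a^2n_1n_2\equiv(\ast)^2\pmod p$, which says nothing when $p\mid a$ (possible even for square-free $D$). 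The paper sidesteps both points with the two-variable identity (\ref{ide1}), which exhibits $n_1n_2=m^2-Dw^2$ directly, so that $(n_1n_2,D)_p=1$ for $p\mid D$ falls out of (\ref{sq}) and (\ref{mult}) with no interference from $a$.

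There is also an error in your $p=\infty$ case: you want a properly represented $n$ of even degree and justify its existence by claiming $|D|$ odd forces $|a|$ and $|c|$ to have opposite parity; the proof of Lemma \ref{cl} explicitly allows both to be odd (with the leading terms of $b^2$ and $ac$ cancelling), so that claim is false in general. No parity choice is needed: any properly represented $n$ satisfies $n\succ0$ (Lemma \ref{cl}), so $n_0\equiv(-1)^{|n|}$ modulo squares, while $D$ negative of odd degree forces its leading coefficient to be a square; plugging into (\ref{sy2}) gives $(n,D)_\infty=(-1)^{|n||D|}(-1)^{|n||D|}=1$ for every such $n$, which is the paper's one-line argument. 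Your treatment of finite $p\nmid D$ (both valuations vanish, so the symbol is $1$) is fine and matches the paper; note only that (\ref{sq}) concerns squares in $k(t)$, not squares modulo $p$, so it cannot be cited in your parenthetical ``$D$ a square mod $p$'' remark, and that the paper additionally verifies $(n,D)_p=1$ at primes $p\nmid D$ dividing $n$, which is used later for the product relation (\ref{pro2}).
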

\begin{proof}
We have the identity\footnote{This is not a direct composition; it fails to satisfy the last conditions given in Proposition \ref{clg2}.}
\begin{align}\label{ide1}
Q(x,y)Q(x',y')&=(axx'+b(xy'+x'y)+cyy')^2-D(t)(xy'-x'y)^2.\end{align}
Suppose that  $n$ and $n'$ are prime to $D$ and properly represented by $Q$. 
 For $p|D$ it follows from  (\ref{ide1}), (\ref{sq})  and (\ref{mult}) that $1=(nn',D)_p=(n,D)_p(n',D)_p$.
Suppose that $p|n$.  By  Lemmas \ref{repp} i) and \ref{cl}, we have that $Q$ is properly equivalent to some $(n,b,c)\in \Qc_D$, so $D=b^2-nc$ and $D$ is a square modulo $p$. Thus $(n,D)_p=1$.   For $p=\infty$, since $n\succ0$,   we  have that  $(n,D)_\infty=1$ from (\ref{sy2}).
    For any other $p$, that $(n,D)_p=1$ is immediate from the definition. \end{proof}
Each $\chi_p$  is well defined on the class of $Q$.
It induces a homomorphism from $\Cc_D$ to $F_p^{\sbullet}/F_p^{\sbullet 2}$
 by (\ref{mult}) and i) of Proposition \ref{pro} together with Lemma \ref{repp}. 
The intersection of the 
kernels  of $\chi_p$ for $p|D$ will be called the {\it principal genus} and the cosets of the principal genus the {\it genera}.
Let $\mathcal{G}_D=\mathcal{G}_D(k)$ denote the group of  genera.
 The principal genus  is that genus containing  $(1,0,-D)$. Thus we have
 
 \begin{prop}\label{mt3}
The genera $\mathcal{G}_D$ comprise an abelian group in which every non-identity element is of order  two. Each genus $G$ is characterized by the values $\chi_p(Q)$ for $p|d$, where $Q$ is any form in $G$.
\end{prop}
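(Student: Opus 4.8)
The plan is to establish Proposition \ref{mt3} in three stages: first that $\G_D$ is a group with the claimed exponent, second that each genus is determined by its character values, and third to record the count as a corollary of the preceding structural results. Most of the work has already been done in the setup: by Lemma \ref{lwd} each $\chi_p$ ($p \mid D$) is well-defined on $\Qc_D$ and descends to $\Cc_D$, and by the remarks following Lemma \ref{lwd} (namely (\ref{mult}) together with part i) of Proposition \ref{pro}) each $\chi_p \colon \Cc_D \to F_p^{\sbullet}/F_p^{\sbullet 2}$ is a group homomorphism. So we have a homomorphism $\chi = (\chi_p)_{p \mid D} \colon \Cc_D \to \prod_{p \mid D} F_p^{\sbullet}/F_p^{\sbullet 2}$, whose kernel is by definition the principal genus, and $\G_D$ is by definition $\Cc_D$ modulo the principal genus, i.e.\ the image of $\chi$. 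That immediately makes $\G_D$ an abelian group, and since each target group $F_p^{\sbullet}/F_p^{\sbullet 2}$ is an elementary abelian $2$-group (every element squares to the identity), so is the image; hence every non-identity element of $\G_D$ has order exactly two. This is the first paragraph of the argument.

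For the second claim, that each genus $G$ is characterized by the tuple $(\chi_p(Q))_{p \mid D}$ for any $Q \in G$: this is essentially a tautology once the right identification is made. Two classes $C, C' \in \Cc_D$ lie in the same genus precisely when $C \otimes (C')^{-1}$ lies in the principal genus, i.e.\ in $\ker \chi$; by Proposition \ref{clg} ii) the inverse of $C'$ is represented by $(a',-b',c')$ if $C'$ contains $(a',b',c')$, and since $\chi$ is a homomorphism with $\chi_p\big((a',-b',c')\big) = \chi_p\big((a',b',c')\big)^{-1}$ (again using that $a'$ is properly represented by $(a',-b',c')$ and that $F_p^{\sbullet}/F_p^{\sbullet 2}$ has exponent $2$), we get $\chi(C \otimes (C')^{-1}) = \chi(C)\chi(C')^{-1}$. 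Hence $C$ and $C'$ are in the same genus if and only if $\chi(C) = \chi(C')$, which is exactly the assertion that a genus is the fiber of $\chi$ over a fixed value-tuple. One should be mildly careful here only in checking that $\chi$ is genuinely constant on a genus (true by construction, since the genus is a coset of $\ker\chi$) and that distinct genera give distinct tuples (true since they are distinct cosets of $\ker\chi$, and cosets are separated by the induced injective map $\G_D \hookrightarrow \prod_{p\mid D} F_p^\sbullet/F_p^{\sbullet 2}$).

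I do not expect a serious obstacle; the content of Proposition \ref{mt3} is purely formal group theory applied to the homomorphism $\chi$, with all the analytic and arithmetic substance (well-definedness of $\chi_p$, its vanishing at $p \nmid D$, homomorphism property) already extracted in Lemmas \ref{lwd}, \ref{repp}, \ref{cl} and Proposition \ref{pro}. The one point worth stating carefully, and the closest thing to a subtlety, is that the \emph{number} of genera is not asserted here and is not needed for this proposition — the count of genera (which would require the principal genus theorem, i.e.\ that $\ker\chi = \Cc_D^2$, together with Proposition \ref{amb}) is deferred to later in the section. So the proof should explicitly \emph{not} attempt to compute $|\G_D|$, and should confine itself to: $\G_D$ is an elementary abelian $2$-group because it embeds into a product of groups of exponent $2$; and a genus is a fiber of $\chi$, hence determined by the character values. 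I would write it in three or four sentences, citing (\ref{mult}), Proposition \ref{pro} i), and Lemma \ref{lwd} for the homomorphism and well-definedness, Proposition \ref{clg} for the group structure on $\Cc_D$ used to reduce same-genus to membership in $\ker\chi$, and (\ref{sq}) (or simply the definition of $F_p^\sbullet/F_p^{\sbullet 2}$) for the exponent-two statement.
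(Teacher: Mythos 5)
Your proposal is correct and matches the paper's treatment: the paper gives no separate proof of this proposition, presenting it as an immediate consequence of the preceding observations that each $\chi_p$ descends to a homomorphism $\Cc_D\to F_p^{\sbullet}/F_p^{\sbullet 2}$ (via (\ref{mult}), Proposition \ref{pro} i) and Lemma \ref{repp}), so that the genera are the cosets of $\ker\chi$ and $\mathcal{G}_D$ embeds in a product of exponent-two groups. Your expanded write-up of this formal argument is accurate and adds nothing that conflicts with the paper.
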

It follows from (\ref{pf})
that the characters are not independent but satisfy
\begin{equation}\label{pro2}
\prod_{p|D }N_{F_p/k}\big(\chi_p(Q)\big)=1.
\end{equation}

\subsubsection*{Remark} 
The product formula (\ref{pf}) is equivalent to Weil reciprocity for $k(t)$.  For $a=p$ and $b=q$ with $p,q$ distinct primes in $k[t]$, it's truth  is obvious when it is written in the form
\[
(-1)^{|p||q|}\prod_{\b;\;q(\b)=0}p(\b)=\prod_{\a;\,p(\a)=0} q(\a).
\]
Weil reciprocity can be used to prove quadratic reciprocity (\ref{art}) when $k$ is finite (see \cite{Ros}).
%

Say two forms $Q,Q'\in \mathcal{Q}_D$ are {\it rationally equivalent} if (\ref{e2}) holds for some $M\in \mathrm{GL}_2(k(t)).$ Such an $M$ must have $\det M=\pm1.$
\begin{prop}\label{mt0}
Two forms $Q,Q'\in \mathcal{Q}_D$ are in the same genus if and only if they are rationally equivalent.
\end{prop}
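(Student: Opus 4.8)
The plan is to prove the two directions separately, with the substance concentrated in the "rationally equivalent $\Rightarrow$ same genus" implication, which should be the easy half, and "same genus $\Rightarrow$ rationally equivalent", which will rest on the Hasse-principle machinery advertised in the overview. First, suppose $Q' = Q|M$ with $M \in \mathrm{GL}_2(k(t))$, necessarily $\det M = \pm 1$ since discriminants agree. To compare genus characters I would pick a polynomial $n \in k[t]$ prime to $D$ that is properly represented by $Q$ over $k[t]$ (Lemma \ref{repp} iii)); then $n$ is represented by $Q'$ over $k(t)$, i.e. $n = Q'(x,y)$ for some $x,y \in k(t)$, and after clearing denominators $\lambda^2 n = Q'(\lambda x, \lambda y)$ with $\lambda x, \lambda y \in k[t]$. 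Using the identity (\ref{ide1}) for $Q'$ together with multiplicativity (\ref{mult}) and $(a,b^2)_p = 1$ of (\ref{sq}), one gets $(n,D)_p = (Q'(\lambda x,\lambda y)/\lambda^2, D)_p = (\lambda^2 n, D)_p = (n', D)_p$ for whichever $n' = Q'(\lambda x, \lambda y)$ is prime to $p$ — more carefully, I would argue that for each prime $p \mid D$ the symbol $(m, D)_p$ agrees for all $m$ that are values $Q'(u,v)$ with $u,v \in k(t)$ and $p \nmid m$, so in particular $\chi_p(Q) = (n, D)_p = \chi_p(Q')$. Hence $Q$ and $Q'$ lie in the same genus.

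For the converse, suppose $Q$ and $Q'$ are in the same genus, so $\chi_p(Q) = \chi_p(Q')$ for every $p \mid D$ (including, by Lemma \ref{lwd}, automatically for all $p \nmid D$ and $p = \infty$). The strategy is to realize rational equivalence of binary quadratic forms of the same nonzero discriminant as equivalence of the associated ternary (or rank-$3$) quadratic forms / quaternion-type invariants over $k(t)$, and then invoke the Hasse principle over $k(t)$ — Proposition \ref{has}, itself a consequence of Milnor's computation of the Witt ring $W(k(t))$ in Proposition \ref{mh} — to reduce $k(t)$-equivalence to equivalence over all completions, i.e. to matching of local invariants at the residue fields $F_p$ and at infinity. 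Concretely, a nondegenerate binary form over a field $F$ (char $\neq 2$) of discriminant $D$ is classified up to $\mathrm{GL}_2(F)$-equivalence by its discriminant together with its Hasse–Witt invariant, and the Hasse–Witt invariant at $p$ is exactly governed by Hilbert symbols of the shape $(n, D)_p$ where $n$ is a represented value; so $\chi_p(Q) = \chi_p(Q')$ for all $p$ says precisely that $Q$ and $Q'$ have the same local invariants everywhere, whence by the Hasse principle they are $k(t)$-equivalent. I would phrase this using the symbols already set up: the class of $Q$ in $W(k(t))$ (or the relevant quotient detecting $\mathrm{GL}_2$-equivalence once discriminant is fixed) is determined by the tuple $\big((n,D)_p\big)_p$, and these tuples coincide.

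The main obstacle — and the place I expect to spend real effort — is the bookkeeping that makes "same genus character at every $p$" literally equal to "same Hasse–Witt invariant at every place of $k(t)$", in a form to which Proposition \ref{has} applies. Two points need care: (i) the product formula (\ref{pf}), equivalently (\ref{pro2}), is the one relation among the local symbols, and one must check it is compatible with (does not over- or under-count) the reciprocity relation among Hasse invariants that the Hasse principle for $k(t)$ carries — this is where the fact, flagged in the overview, that Milnor's theorem is stated in terms of the residue fields $F_p$ (number fields) rather than local fields, must be used correctly; and (ii) one must ensure the target of rational equivalence is genuinely $\mathrm{GL}_2(k(t))$ and not merely $\mathrm{GL}_2$ of some completion, i.e. that no global obstruction beyond the characters survives — precisely the content of the Hasse principle, so the argument is: local invariants all match $\Rightarrow$ (Proposition \ref{has}) globally equivalent over $k(t)$. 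Once that translation is pinned down the proof is short; I would state it as: $Q \sim_{k(t)} Q'$ iff they have equal discriminant (true by hypothesis, both $= D$) and equal Hasse invariant at every place, and the latter is exactly $\chi_p(Q) = \chi_p(Q')$ for all $p$ including $\infty$, which holds since $Q, Q'$ share a genus and the characters off $D$ and at $\infty$ vanish identically.
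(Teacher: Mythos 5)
Your proposal is correct and follows essentially the paper's route: the paper deduces Proposition \ref{mt0} directly from Proposition \ref{has} (after normalizing $Q,Q'$ by Lemma \ref{repp} so that their leading coefficients $a,a'$ are prime to $D$, which makes $\chi_p(Q)=(a,D)_p$), which is exactly the reduction you describe for the hard direction. Your hands-on argument for the easy direction via the identity (\ref{ide1}) is also sound, though the paper obtains both directions at once from the ``if and only if'' of Proposition \ref{has}.
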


The proof of  the corresponding result over $\Z$  given in \cite{BS}  is based on the Hasse-Minkowski theorem in the important special case given by Legendre. Most proofs of Legendre's theorem are based on some sort of reduction method and  rely on the 
fact that there are only finitely many integers of a bounded absolute value. In order to prove Proposition \ref{mt0} we must use a method that is different.   The  result we need is a determination of  the Witt ring of $k(t)$ \cite{Wit},  obtained in the form we need  by Milnor \cite{Mil} (c.f.\cite{Kne}, \cite{MH}).  A useful exposition of this result can be found in \cite{Lam}. I will give here a consequence that is easy to apply.

For a field $F$ let $W(F)$ be its Witt ring.  See   \cite[Chap. II]{Lam} for its definition and basic properties (assuming that the characteristic of $F$ is not 2).
We define a set of maps between Witt rings as follows. For each prime $p\in k[t]$ let
\[
\psi(p): W(k(t))\rightarrow W(F_p)
\]
be defined through
\[
\psi(p)\langle a\rangle=\begin{cases} \langle \bar{u}\rangle, \;\text{if $\ell$ is odd}\\0,\;\text{otherwise,}\end{cases}\;\;\;\text{where $ a=p^\ell u$ with $v_p(u)=0$.}
\]
Also, define 
$
\psi(\infty): W(k(t))\rightarrow W(k)
$
 by
\begin{equation}\label{psi}
\psi(\infty)\langle a\rangle=\begin{cases} \langle \bar{a}_0\rangle, \;\text{if $\ell$ is even}\\0,\;\text{otherwise}\end{cases}\;\;\;\text{when $a(t^{-1})=t^{-\ell}(a_0+a_1t+\dots).$}
\end{equation}
Here $\bar{a}$ gives  the natural image of $a\in k(t)^{\sbullet}$  in $F_p^{\sbullet}/F_p^{\sbullet 2}$ (or of $a\in k^{\sbullet}$ in $k^{\sbullet}/k^{\sbullet 2}$ when $p=\infty$). Then the following result is a consequence of \cite[Milnor's Thm. p. 306]{Lam}:
\begin{prop}\label{mh}
The map $\psi=\psi(\infty) \times \prod_p \psi(p)$ induces  an isomorphism 
\[
W(k(t))\rightarrow  W(k)\times  \prod_{p<\infty} W(F_p).
\]

\end{prop}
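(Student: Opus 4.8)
The plan is to deduce this from Milnor's structure theorem for the Witt ring of a rational function field, as stated in \cite[p.~306]{Lam}, by checking that the map $\psi$ I have written down is literally (a reindexing of) the map appearing there. Milnor's theorem provides a short exact sequence of Witt groups
\[
0 \longrightarrow W(k) \longrightarrow W(k(t)) \xrightarrow{\ \partial\ } \bigoplus_{p} W(F_p) \longrightarrow 0,
\]
where the first arrow is induced by the inclusion $k \hookrightarrow k(t)$, the sum runs over monic irreducible $p\in k[t]$, and $\partial = \bigoplus_p \partial_p$ is the sum of the second residue homomorphisms $\partial_p$ associated to the discrete valuations $v_p$ (with respect to a choice of uniformizer, here $p$ itself). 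The first step is to recall that each $\partial_p$ is characterized on one-dimensional forms by $\partial_p\langle u\rangle = 0$ and $\partial_p\langle p u\rangle = \langle \bar u\rangle$ for $u$ a $v_p$-unit, and extended additively/multiplicatively; this is exactly the formula defining my $\psi(p)$. The sequence splits, and a standard choice of splitting (again in \cite{Lam}) is given by the residue map at the place $t=\infty$, which on a unit at $\infty$ returns the reduction of its leading-type coefficient $\bar a_0$ — exactly the formula defining my $\psi(\infty)$, once one writes $a$ in the variable $t^{-1}$ as in (\ref{psi}). So the content of the proof is: (i) quote the exact sequence; (ii) match $\bigoplus_p \partial_p$ with $\prod_p \psi(p)$; (iii) match the splitting with $\psi(\infty)$; (iv) conclude that $\psi(\infty)\times\prod_p\psi(p)$ is an isomorphism onto $W(k)\times\prod_{p<\infty}W(F_p)$, noting that since the sum $\bigoplus_p$ and the product $\prod_p$ agree here (for a fixed class in $W(k(t))$ only finitely many $\psi(p)$ are nonzero, as $v_p(a)=0$ for all but finitely many $p$), the stated map makes sense and is bijective.

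I would carry this out as follows. First, set up notation: for a monic irreducible $p$, the residue field $F_p = k[t]/(p)$ is the number field of \S\ref{s3}, and $v_p$ the $p$-adic valuation on $k(t)$ normalized by $v_p(p)=1$; for the infinite place, $v_\infty(a) = -\deg a$ on nonzero polynomials, with uniformizer $t^{-1}$ and residue field $k$. Second, recall that the completion $k(t)_p$ has Witt group fitting into $W(F_p)\oplus W(F_p) \cong W(k(t)_p)$ via $\langle 1\rangle$ and $\langle p\rangle$, and that $\partial_p$ is the projection to the second factor followed by the identification $W(F_p)\cong W(F_p)$; verifying that this projection, precomposed with $W(k(t))\to W(k(t)_p)$, is given on generators by my piecewise formula is a direct computation from the definition of the Witt group (Witt equivalence classes of nondegenerate symmetric bilinear forms). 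Third, quote that Milnor's sequence is exact and that $\psi(\infty)$, extended to $W(k(t))$ from its values on $\langle a\rangle$ via additivity and the multiplicative structure, is a ring-section of $k\hookrightarrow k(t)$; hence $\ker\psi(\infty)\cap\bigcap_p\ker\psi(p)=0$ and the combined map is injective, while exactness gives surjectivity onto the stated product.

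The main obstacle — really the only delicate point — is bookkeeping about which place plays the role of the splitting and about normalizations. Milnor's theorem is usually stated with the sum over \emph{all} places of $\mathbb{P}^1_k$, the place $\infty$ included among the $\partial_p$'s, after which one shows the total residue map $\bigoplus_{\text{all }p,\ \text{incl.}\ \infty}\partial_p$ is surjective with kernel $W(k)$; the asymmetry in Proposition \ref{mh}, where $\infty$ is singled out as the \emph{splitting} rather than as a residue target, comes from the reciprocity relation (\ref{pf})/Weil reciprocity, which expresses $\partial_\infty$ in terms of the finite $\partial_p$ (composed with transfers $N_{F_p/k}$). So I must be careful to invoke the version of the theorem in \cite{Lam} in which the decomposition is over \emph{finite} places only and $\infty$ furnishes a canonical splitting; with that version in hand, the identification of $\psi$ with the structure map is essentially formal, and the proof is short. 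A secondary, routine check is that $\psi(\infty)$ as defined on one-dimensional forms in (\ref{psi}) does extend to a well-defined ring homomorphism on all of $W(k(t))$ — this follows because it agrees with the residue homomorphism at $t=\infty$ restricted to forms with even valuation, which is the standard first residue map and is known to be well-defined.
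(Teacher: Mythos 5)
Your proposal is correct and takes essentially the same route as the paper, which gives no argument beyond citing Milnor's theorem as presented in Lam (p.~306): the $\psi(p)$ are the second residue homomorphisms and $\psi(\infty)$ is the first residue map at the place $t=\infty$, which splits $W(k)\hookrightarrow W(k(t))$. The only caution is that surjectivity holds onto the direct sum $\bigoplus_{p<\infty}W(F_p)$ rather than the full direct product, a notational looseness already present in the statement itself rather than a gap in your argument.
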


Proposition \ref{mt0} follows from the next proposition and the definition of genera from the previous subsection.
\begin{prop}\label{has} Let $D\in k[t]$ be square-free and negative with $|D|$ odd.
Suppose that the forms  $Q=(a,b,c)$ and $Q'=(a',b',c')$ are in $\mathcal{Q}_D$ with $a,a'$ prime to $D$.  Then $Q$ and $Q'$ are equivalent over $k(t)$ if and only if 
\[
(a,D)_p=(a',D)_p
\]
for all  $p|D$. 
\end{prop}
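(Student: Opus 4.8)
The plan is to translate equivalence of binary quadratic forms over $k(t)$ into an identity in the Witt ring $W(k(t))$ and then apply the decomposition of Proposition \ref{mh}. Recall that over a field of characteristic not $2$, two nonsingular binary quadratic forms with the same discriminant are isometric if and only if they are isometric over every place, and more concretely, over $k(t)$ the form $(a,b,c)$ with $b^2-ac=D$ represents the binary form $\langle a\rangle\langle 1,-D\rangle$ (completing the square, since $a$ is prime to $D$ and hence nonzero: $aQ(x,y)=(ax+by)^2-Dy^2$). So $Q\sim_{k(t)}Q'$ iff $\langle a,-aD\rangle\cong\langle a',-a'D\rangle$ in $W(k(t))$, i.e. iff $\langle a,-aD\rangle-\langle a',-a'D\rangle=0$ in $W(k(t))$. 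Factoring out, this class equals $\langle a\rangle\langle 1,-D\rangle-\langle a'\rangle\langle 1,-D\rangle=(\langle a\rangle-\langle a'\rangle)\langle 1,-D\rangle$; writing $\langle a\rangle-\langle a'\rangle=\langle a,-a'\rangle-\langle 1,-1\rangle$ and noting $\langle 1,-1\rangle$ is the zero (hyperbolic) class, the vanishing is equivalent to $\langle a,-a',-aD,a'D\rangle=0$, a $4$-dimensional form being hyperbolic — which since it has trivial discriminant $1$ amounts to its Hasse (Witt) invariant being trivial, hence to the single Steinberg-symbol condition $(a,-a')_{\rm global}$-type identity; but the clean way is just to push the whole class through $\psi$.

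First I would invoke Proposition \ref{mh}: the class $w\defeq\langle a,-aD\rangle-\langle a',-a'D\rangle\in W(k(t))$ is zero if and only if $\psi(\infty)(w)=0$ in $W(k)$ and $\psi(p)(w)=0$ in $W(F_p)$ for every prime $p\in k[t]$. So the task reduces to computing each local component. For a prime $p\nmid D$: both $a$ and $a'$ are prime to $D$, but they may still be divisible by $p$; however, by Lemma \ref{repp}(iii) together with the fact that every class contains, for any prescribed finite set of primes, a form whose leading coefficient $a$ is prime to that set, I may first replace $Q$ by a properly equivalent form with $a$ prime to $D$ \emph{and} to $p$ — but since the claim ranges over all $p$ at once this replacement has to be done more carefully: instead I argue directly that $\psi(p)(w)$ depends only on the class of $Q$ mod squares at $p$, and use that $\chi_p(Q)=(n,D)_p$ is the relevant invariant. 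Concretely, $\psi(p)\langle a,-aD\rangle$ unwinds to a rank-$\le 2$ form over $F_p$ whose discriminant and Hasse invariant are governed exactly by the symbol $(a,D)_p\in F_p^\bullet/F_p^{\bullet 2}$; the same for $Q'$ with $(a',D)_p$. Thus $\psi(p)(w)=0$ iff $(a,D)_p=(a',D)_p$. For $p\mid D$ this is the hypothesis directly; for $p\nmid D$ (including $p=\infty$) both symbols equal $1$ by Lemma \ref{lwd} (its proof shows $(n,D)_p=1$ whenever $p\nmid D$, using that $D$ is a square mod such $p$ when $p\mid a$, that $a\succ 0$ handles $p=\infty$, and the definition otherwise), so the local condition is automatic. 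At $p=\infty$ the computation of $\psi(\infty)$ uses Definition \ref{de1}: the positivity of $a$ and $a'$ (forms lie in $\Qc_D$) together with $|D|$ odd forces the archimedean symbols to be trivial, i.e. $\psi(\infty)(w)=0$ unconditionally.

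Assembling: $w=0$ in $W(k(t))$ iff all local components vanish iff $(a,D)_p=(a',D)_p$ for all $p$ iff $(a,D)_p=(a',D)_p$ for all $p\mid D$ (the other places being free), and $w=0$ iff $Q\sim_{k(t)}Q'$. That is exactly the statement. I would also note the converse direction needs no extra work: if $Q$ and $Q'$ are $k(t)$-equivalent then $\langle a,-aD\rangle\cong\langle a',-a'D\rangle$ as forms over $k(t)$, so their images under each $\psi(p)$ agree, giving $(a,D)_p=(a',D)_p$ for all $p$, in particular for $p\mid D$.

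The main obstacle is the bookkeeping at primes $p\nmid D$ that may still divide $a$ or $a'$: one must be sure that $\psi(p)(w)=0$ there without being able to simultaneously normalize the leading coefficient away from all such $p$. The resolution is that the symbol identity $(a,D)_p=(a',D)_p$ is invariant under proper equivalence (Lemma \ref{lwd} and the homomorphism property following it), and for $p\nmid D$ both sides are forced to $1$ — so the potentially delicate primes contribute nothing. A secondary technical point is to make the dictionary between the residue-form $\psi(p)\langle a,-aD\rangle$ and the Steinberg symbol $(a,D)_p$ precise; this is a direct unwinding of the definitions (\ref{sy1})–(\ref{sy2}) against the formula for $\psi(p)$, together with the standard fact that a binary form over a field is determined up to hyperbolic summands by its discriminant and Hasse invariant — and here the discriminants match by construction, so only the symbol remains.
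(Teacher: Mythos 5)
Your proposal is correct and follows essentially the same route as the paper: diagonalize $Q$ over $k(t)$ to $\langle a,-aD\rangle$, pass to the Witt ring, apply Milnor's decomposition (Proposition \ref{mh}) via the residue maps $\psi(p)$ and $\psi(\infty)$, and observe that the local components at $p\mid D$ recover the symbols $(a,D)_p$ while those at $p\nmid D$ (using that $D\equiv b^2\pmod p$ when $p\mid a$) and at $\infty$ (using $a\succ 0$) vanish automatically. The only point to make explicit, as the paper does, is that Witt-equivalence of $\langle a,-aD\rangle$ and $\langle a',-a'D\rangle$ upgrades to isometry because the two forms have the same dimension.
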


\begin{proof}

Each $Q=(a,b,c)\in \mathcal{Q}_d$ is equivalent over $k(t)$ to $\langle a,-\tfrac{D}{a}\rangle\defeq (a,0,-\tfrac{D}{a})$.
 We will use Proposition \ref{mh} to show that $\langle a,-aD\rangle$ and $\langle a',-a'D\rangle$
are Witt-equivalent if and only if $(a,D)_p=(a',D)_p$ for all $p.$  This is enough since as forms over $k(t)$ they have the same dimension, namely 2 (see e.g. \cite[Prop.1.4, p.29]{Lam}).

For $p<\infty$ we have
\begin{equation}\label{bs1}
\psi(p)\langle a, -aD\rangle=\begin{cases} \langle -\bar{u}\bar{a}\rangle, \;\text{$D=pu$ }\\0,\;\text{otherwise}\end{cases}\;\;\; (a,D)_p=\begin{cases} \bar{a}, \;\text{ $p|D$ }\\1,\;\text{otherwise.}\end{cases}.
\end{equation}
In both statements we use that $D$ is a square modulo any $p$ with $p|a;$ 
 in the first  $\psi(p)\langle a, -aD\rangle$ is isotropic and in the second $(a,D)_p=1.$
Note that $u$ in (\ref{bs1}) only depends on $D$.
Also, we have
\begin{equation}\label{bs2}
\psi(\infty)\langle a, -aD\rangle= \langle 1\rangle,\;\;\;(a,D)_\infty=1.
\end{equation}
Here the first statement follows using $a\succ0$ and checking each case when $|a|$ is odd or even, referring to  (\ref{psi}).
That $(a,D)_\infty=1$ was shown in Lemma \ref{lwd}.
Comparison of both sides of (\ref{bs1}) and (\ref{bs2}) for each prime $p$ and $\infty$, together with Proposition \ref{mh}, completes the proof.  
\end{proof}

Now we apply Proposition \ref{mt0} to prove the following analogue of Gauss's principal genus theorem.

\begin{prop}\label{prg}
The principal genus coincides with  $\Cc_D^2$ and hence  $\mathcal{G}_D=\Cc_D/\Cc_D^2.$
\end{prop}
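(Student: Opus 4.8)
The plan is to prove the two inclusions $\Cc_D^2 \subseteq$ (principal genus) and (principal genus) $\subseteq \Cc_D^2$ separately, with the first being routine and the second being the substantive part that draws on Proposition \ref{mt0}.

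For the inclusion $\Cc_D^2 \subseteq$ (principal genus): given a class $C$, pick a form $Q=(a,b,c)\in C$ with $a$ prime to $D$ (possible by Lemma \ref{repp} iii)). By Proposition \ref{pro} ii), any form in $C\otimes C$ properly represents $a^2$, so for each prime $p\mid D$ we may compute $\chi_p$ on $C^2$ using the value $a^2$ (after noting $a^2$ is prime to $p$): $\chi_p(C^2) = (a^2, D)_p = 1$ by (\ref{sq}). Since this holds for all $p\mid D$, the class $C^2$ lies in the principal genus. (One also checks $a\succ 0$ issues are harmless since $a^2\succ0$ automatically; strictly one uses that $a^2$ is properly represented and positivity is automatic for properly represented polynomials by Lemma \ref{cl}.)

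For the reverse inclusion, suppose $C$ lies in the principal genus, and pick $Q=(a,b,c)\in C$ with $a$ prime to $D$. The principal form $(1,0,-D)$ also lies in the principal genus, so $\chi_p(Q) = \chi_p((1,0,-D)) = (1,D)_p = 1$ for all $p\mid D$; that is, $(a,D)_p = 1 = (1,D)_p$ for all $p\mid D$. By Proposition \ref{has} (applied to $Q$ and the principal form $(1,0,-D)$, both of which have leading coefficient prime to $D$), $Q$ is equivalent over $k(t)$ to the principal form, i.e. $Q$ is in the same genus as the principal form — which is exactly Proposition \ref{mt0} restated, consistent with the remark that Proposition \ref{mt0} follows from Proposition \ref{has}. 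Now I must upgrade ``rationally equivalent to the principal form'' to ``lies in $\Cc_D^2$.'' The mechanism is the classical one: if $Q$ is equivalent to $(1,0,-D)$ over $k(t)$, there is $M\in \mathrm{GL}_2(k(t))$ with $Q|M = (1,0,-D)$; clearing denominators, $Q$ properly represents over $k[t]$ some polynomial $r^2$ which is a square of a polynomial (coming from the first column of $M$ scaled to be integral and primitive). More precisely: rational equivalence of $Q$ to the principal form means $Q$ represents a nonzero square in $k(t)^{\sbullet 2}$; scaling shows $Q$ properly represents $g^2$ for some $g\in k[t]$ with $g$ prime to $D$ (using Lemma \ref{repp} iii) to adjust, or a gcd argument to extract the primitive part). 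Then by Proposition \ref{pro} ii) applied in reverse — or rather by the genus/composition bookkeeping — a form properly representing a square $g^2$ with $g$ prime to $D$ lies in $\Cc_D^2$: indeed if $(g^2, b', c')\sim Q$, then $Q$ is in the same class as a form arising from composing the class of $(g,\ast,\ast)$-type data with itself. The cleanest route is: $Q\sim (g^2, \beta, \gamma)$ with $g$ prime to $D$; let $Q_0 = (g,\beta_0,\gamma_0)\in\Qc_D$ be a form with first coefficient $g$ (exists: solve $\beta_0^2 \equiv D \bmod g$, which is solvable since $D$ is a square mod $g$ as $g$ divides a represented value... actually we need $D$ a QR mod $g$, which follows from $\chi$-triviality / from $Q\sim(g^2,\dots)$ forcing $D$ square mod $g^2$ hence mod $g$); then $Q_0\otimes Q_0$ properly represents $g^2$ by Proposition \ref{pro} ii), and by Proposition \ref{min}'s uniqueness or a direct composition identity, $Q_0\otimes Q_0$ is the class of $Q$. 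Hence $C = [Q_0]^2 \in \Cc_D^2$. The last equality $\mathcal{G}_D = \Cc_D/\Cc_D^2$ is then immediate from the definition of $\mathcal{G}_D$ as $\Cc_D$ modulo the principal genus.

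The main obstacle is the final step of the reverse inclusion: extracting from ``$Q$ is $k(t)$-equivalent to the principal form'' a clean statement that $Q$ properly represents a polynomial square $g^2$ with $g$ coprime to $D$, and then matching the self-composition $Q_0\otimes Q_0$ to the class of $Q$. This requires care with denominators when passing from $\mathrm{GL}_2(k(t))$ to $\mathrm{SL}_2(k[t])$, with extracting the primitive part of the representing vector, and with the solvability of the congruence $\beta_0^2\equiv D\pmod g$ — all of which have classical $\Z$-analogues (this is exactly the argument in \cite[pp.129--132]{Ven} adapted as the paper indicates) but must be checked to go through for $k[t]$, using that $|D|$ is odd and $D$ negative so that $\Qc_D$ behaves like positive definite forms and Proposition \ref{min} provides uniqueness of reduced representatives.
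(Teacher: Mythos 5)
Your proposal is correct and follows essentially the same route as the paper: the forward inclusion via Proposition \ref{pro} ii), and the reverse inclusion via Proposition \ref{mt0} (rational equivalence to the principal form), extraction of a \emph{proper} representation of a square $s^2$ by dividing out the gcd, and then exhibiting a square root of the class by composing $(s,b',c's)$ with itself using Proposition \ref{clg}. Of the alternatives you list for the final step, only the direct composition identity actually closes the argument (neither Proposition \ref{min}'s uniqueness nor Proposition \ref{pro} ii) ``in reverse'' suffices, and coprimality of $s$ to $D$ is neither needed nor obtainable --- one only needs $\gcd(s,b')=1$, which follows from $D$ being square-free); this is exactly how the paper concludes.
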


\begin{proof}
First suppose that the class of $Q$ is the square of the class of  $Q'=(a',b',c'),$ where $a'$ is chosen to be prime to $D$. By ii) of Proposition \ref{pro} we have that $Q$ properly represents $a'^2$ and so $\chi_p(Q)=1$ for all $p|D$, hence $Q$ is in the principal genus. 

Conversely, suppose that $Q=(a,b,c)$ is in the principal genus.  By Proposition \ref{mt0} we know that $Q$ is rationally equivalent to $(1,0,-D)$, so there are $q_1,q_2,r\in k[t]$  with $r\neq 0$ 
such that 
\[
a(\tfrac{q_1}{r})^2+2b(\tfrac{q_1}{r})(\tfrac{q_2}{r})+c(\tfrac{q_2}{r})^2=1.
\]
Hence $Q$ represents $r^2$, but perhaps improperly.  
In any case, by Lemma \ref{repp} there are $s,b',c'\in k[t]$ with $s\succ0$ so that $Q$ is properly equivalent to $(s^2,b',c')$ where $D=b'^2-s^2c'$.  If $b'$ and $s$ share a prime factor $q$, then $q^2$ divides both $s^2$ and $b'^2$ so must also divide $D$.
But $D$ is square-free. Thus $s$ is prime to $b'$.
By  Proposition \ref{clg}, the form $Q'=( s,b',c's)\in \Qc_D$ will have the property that the square of its class is the class of $Q$.

\end{proof}

\subsection*{The class group  $\Cc_D$ and the Mordell-Weil group $\mathcal{J}_D$ }\label{Hy}

Let  $D\in k[t]$  be square-free  and  negative with  odd degree $|D|=2g+1.$
In this section I will show that $\Cc_D$ is isomorphic to $\mathcal{J}_D$ and so, in particular, is finitely generated as a consequence of the Mordell Weil theorem.

Let $\mathcal{H}$ be a complete smooth curve with function field $k(t,s)$, where
\[
s^2=D(t).
\]
The function $t$ on $\mathcal{H}$  has a double pole that we denote $\infty$.
Let $Q=(a,b,c)$ represent a class  $C\in \Cc_D$.
Then since $a\succ0$ we can write
 \begin{equation}\label{A}
a(t)=\a^2\prod_{i=1}^m(t_i-t)^{n_i}\end{equation} with $\a\in k^{\sbullet}$ and distinct $t_i\in \bar{k}$ and  $n_i\in \Z^+$. Set $s_i=b(t_i)$. Let $\mathrm{div}(1,0,-D)=0$ and otherwise
associate to $Q$   the degree zero divisor on $\mathcal{H}$  given by
\begin{equation}\label{div}\mathrm{div}(Q)= \sum_{i=1}^m n_i(t_i,s_i)-N\infty,
\end{equation}
where $N=\sum_{i=1}^m n_i$.
  It is defined over $k$ and is semi-reduced in that 
if
$(t,s)$ occurs in $\mathrm{div}(Q)$, then $(t,-s)$ does not, unless $s=0$, in which case the multiplicity of $(0,t)$ is one.

As before, let  $\J_d$ be the  group of  $k$-rational  divisor classes  of degree 0 on  $\mathcal{H}$.   We want to prove the following.
\begin{prop}\label{the1}
The map $Q\mapsto \mathrm{div}(Q)$ induces an isomorphism of groups from $\Cc_D$ onto  $\J_D$.   
\end{prop}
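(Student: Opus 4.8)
\textbf{Proof proposal for Proposition \ref{the1}.}

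The plan is to exhibit $Q\mapsto \mathrm{div}(Q)$ as a well-defined map on classes that is simultaneously a group homomorphism, injective, and surjective; the bulk of the work is bookkeeping with semi-reduced divisors and the composition formulas, so I would organize it around the dictionary between forms and divisors rather than prove each property from scratch. First I would check that $\mathrm{div}(Q)$ depends only on the $\mathrm{SL}_2(k[t])$-class of $Q$: since $a$ is an $\mathrm{SL}_2(k[t])$-invariant up to the three generators of Lemma \ref{sl}, I would trace through the formulas (\ref{eqf1}). The action of $C_\a$ and $T_n$ changes $(a,b,c)$ in a way that either fixes $a$ or rescales it by a square, and changes $b$ only modulo $a$, so the points $(t_i,b(t_i))$ appearing in (\ref{div}) are unchanged; the action of $S$ replaces $(a,b,c)$ by $(c,-b,a)$, and here I would use the linear-equivalence relation $(t,s)+(t,-s)-2\infty\sim 0$ on $\mathcal H$ (the divisor of $s-b(t)$, roughly), so that passing from the roots of $a$ to the roots of $c$ gives a linearly equivalent divisor. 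This step also shows $\mathrm{div}(Q)$ lands in $\J_D$ and that $\mathrm{div}(Q^{-1})=-\mathrm{div}(Q)$, matching the inverse $(a,-b,c)$ from Proposition \ref{clg}(ii).

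Next I would prove the homomorphism property. Given concordant representatives $Q=(a,e,a'f)\in C$ and $Q'=(a',e,af)\in C'$ with composite $Q''=(aa',e,f)\in C\otimes C'$ as in Proposition \ref{clg}, the leading coefficients multiply, so (\ref{A}) for $aa'$ is the union (with multiplicity) of the factorizations of $a$ and $a'$, and the middle coefficient $e$ is a common choice of $b$-value at all those points. Hence $\mathrm{div}(Q'')=\mathrm{div}(Q)+\mathrm{div}(Q')$ as divisors, provided no cancellation of the form $(t,s)+(t,-s)$ is forced — and if $a$ and $a'$ share a root $t_i$ with opposite $s$-values one uses concordance ($\gcd(a,a',e+e)=1$ forces, away from $2$, that $e(t_i)\neq 0$ and the values agree) together with the linear equivalence to reduce. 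The cleanest route is to invoke Proposition \ref{clg2}: the explicit bilinear substitution $(X,Y)=(xx'-fyy',\,axy'+a'x'y+2eyy')$ witnesses a product identity, and standard Mumford/Cantor divisor arithmetic (as cited) identifies the divisor attached to such a composite with the sum of the constituent divisors in $\J_D$. I would state this as a lemma and defer the identity-verification to the composition formulas already displayed.

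For injectivity I would use Proposition \ref{min}: each class contains a unique reduced form $(a,b,c)$ with $|b|<|a|<\tfrac12|D|$ and $a^*$ monic, so it suffices to recover this reduced form from its divisor. Given a semi-reduced effective-minus-$N\infty$ divisor $\sum n_i(t_i,s_i)-N\infty$ defined over $k$, the polynomial $a(t)=\prod(t-t_i)^{n_i}$ (monic) lies in $k[t]$, and $b(t)$ is the unique polynomial of degree $<|a|$ with $b(t_i)=s_i$ (to the appropriate order when $n_i>1$) — this is Hermite interpolation — and then $c=(b^2-D)/a\in k[t]$ because $s_i^2=D(t_i)$. So distinct reduced forms give distinct divisor classes once one knows each class in $\J_D$ has a unique semi-reduced representative with pole only at $\infty$ of order $\le g$; that uniqueness is the Riemann–Roch / Mumford normal-form statement for hyperelliptic Jacobians, which I would quote. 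Surjectivity is the same computation run forward: an arbitrary class in $\J_D$ has such a semi-reduced representative with $N\le g<\tfrac12|D|$, and the interpolation above produces a form $(a,b,c)$ with $a$ monic of degree $N$; one then checks $a\succ0$ after absorbing a square leading constant (here oddness of $|D|$ and Lemma \ref{cl}'s parity analysis guarantee $a\succ0$ is achievable, cf.\ Lemma \ref{isom}), so the class of that form maps to the given divisor class.

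The main obstacle I anticipate is the homomorphism step in the presence of coincident points: showing that when roots of $a$ and $a'$ collide (or collide with their conjugates under $s\mapsto -s$), the naive sum of divisors still represents $\mathrm{div}(Q'')$ after the linear-equivalence reductions — i.e.\ that Gaussian composition is exactly Jacobian addition followed by reduction. This is precisely where one must lean on Cantor's translation of divisor arithmetic into polynomial operations and on Proposition \ref{clg2}; I would handle the generic case by direct comparison of divisors and then argue that both sides are continuous/algebraic in the coefficients of $D$ and the forms, so the identity persists on the closed locus of coincidences, or alternatively reduce to the generic case by a base change that separates the roots. Everything else — well-definedness, inverses, the interpolation giving injectivity and surjectivity — is routine given Propositions \ref{min}, \ref{clg}, \ref{clg2} and the standard hyperelliptic normal-form results.
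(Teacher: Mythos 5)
Your proposal is correct and follows essentially the same route as the paper: well-definedness checked on the generators of Lemma \ref{sl} with the $S$-step handled by an explicit linear equivalence (the paper uses the function $f=\tfrac{c(t)}{s-b(t)}=-\tfrac{s+b(t)}{a(t)}$, which packages your ``$s-b(t)$ plus $(t,s)+(t,-s)\sim 2\infty$'' idea), the homomorphism via composition representatives sharing a common middle coefficient, and bijectivity by matching the unique reduced form of Proposition \ref{min} with the unique semi-reduced divisor representative coming from Riemann--Roch (Cantor/Mumford). The ``main obstacle'' you anticipate does not arise in the paper: by Lemma \ref{repp} and Proposition \ref{clg} one may take representatives $(a,b,ca')$ and $(a',b,ac)$ with $a,a'$ coprime (and prime to $D$), so the divisors add on the nose and no collision analysis, Cantor reduction, or genericity argument is needed.
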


\begin{lemma}\label{l2}
The map $Q\mapsto \mathrm{div}(Q)$ induces a map $\phi$ from $\Cc_D$ to $\J_D$. 
\end{lemma}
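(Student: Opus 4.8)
\textbf{Proof proposal for Lemma \ref{l2}.}

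The plan is to verify three things: that $\mathrm{div}(Q)$ depends only on the $\mathrm{SL}_2(k[t])$-class of $Q$, that $\mathrm{div}(Q)$ is indeed a $k$-rational divisor of degree zero (so it represents a class in $\J_D$), and that the induced map respects the group structures, i.e.\ $\phi(C \otimes C') = \phi(C) + \phi(C')$. The degree-zero and $k$-rationality claims are essentially read off from the definition \eqref{div}: the divisor $\sum n_i(t_i,s_i) - N\infty$ has degree $0$ by construction, and since $a(t)$ and $b(t)$ lie in $k[t]$, the finite part is a sum of Galois-stable orbits of points $(t_i, b(t_i))$, hence the divisor is defined over $k$. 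One should also check that $(t_i, s_i)$ is actually a point of $\mathcal{H}$, i.e.\ that $s_i^2 = D(t_i)$; this follows because $a(t_i) = 0$ and $D = b^2 - ac$ give $b(t_i)^2 = D(t_i)$.

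First I would establish well-definedness on classes. By Lemma \ref{sl} it suffices to check invariance (of the divisor class, not the divisor itself) under the generators $C_\a$, $T_n$, and $S$, using the formulas \eqref{eqf1}. For $C_\a\colon (a,b,c) \mapsto (\a^2 a, b, \a^{-2}c)$, the roots $t_i$ and the values $s_i = b(t_i)$ are unchanged, so $\mathrm{div}(Q)$ is literally unchanged. For $T_n\colon (a,b,c)\mapsto (a, b+na, c+2nb+n^2a)$, again $a$ is unchanged, and at a root $t_i$ of $a$ we have $(b+na)(t_i) = b(t_i) = s_i$, so $\mathrm{div}(Q)$ is again literally unchanged. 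The substantive case is $S\colon (a,b,c)\mapsto (c,-b,a)$. Here I would exhibit a principal divisor realizing the difference: on $\mathcal{H}$ the function $s - b(t)$ (or, localized appropriately, the function whose divisor is computed from $s^2 - b(t)^2 = -a(t)c(t)$) has divisor supported on the points above the roots of $a$ and of $c$. Concretely, $\mathrm{div}(s - b(t)) = \sum_i n_i(t_i, s_i) + \sum_j m_j(t_j', -s_j') - (g+1+\text{(correction)})\infty$ where $t_j'$ run over roots of $c$, and comparing this with $\mathrm{div}(Q)$ and $\mathrm{div}(Q|S) = \mathrm{div}(c,-b,a) = \sum_j m_j(t_j', -b(t_j'))\cdot(-1)\dots$ — more carefully, the point is that $\mathrm{div}(Q) - \mathrm{div}(Q|S)$ equals the divisor of $s - b(t)$ up to the relation $s^2 = D(t)$, hence is principal. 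I would also need to handle the degenerate ramification points ($s_i = 0$) and confirm the pole order at $\infty$ bookkeeping works out; this is the one spot requiring genuine care.

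The main obstacle I anticipate is precisely the verification that $\phi$ is a homomorphism. The natural route is to use the explicit Gaussian composition: if $Q = (a,e,a'f)$, $Q' = (a',e,af)$ are concordant representatives as in Proposition \ref{clg}, with composite $Q'' = (aa', e, f)$, then $a(t)a'(t)$ is the product of the leading-coefficient-square and the linear factors, so the roots of $aa'$ are the roots of $a$ together with the roots of $a'$, and at a common-or-separate root the $s$-value is $e(t_i)$ in all three forms. Hence $\mathrm{div}(Q'') = \mathrm{div}(Q) + \mathrm{div}(Q')$ on the nose as divisors — provided $a$ and $a'$ are coprime, which concordance does not quite guarantee but which can be arranged within the classes by Lemma \ref{repp}. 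I would reduce to that coprime case first, then the additivity is immediate from \eqref{div}, and finally invoke Proposition \ref{clg} (and Proposition \ref{clg2} if a cleaner composition triple is needed) to conclude $\phi(C \otimes C') = \phi(C) + \phi(C')$. The identity class maps to $0$ by the convention $\mathrm{div}(1,0,-D) = 0$, completing the check that $\phi$ is a group homomorphism; injectivity and surjectivity are deferred to Proposition \ref{the1}.
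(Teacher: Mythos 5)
Your proposal is correct and follows essentially the same route as the paper: reduce to the generators of Lemma \ref{sl}, observe that $C_\a$ and $T_n$ leave $\mathrm{div}(Q)$ literally unchanged, and realize $\mathrm{div}(Q|S)-\mathrm{div}(Q)$ as a principal divisor via $(s-b)(s+b)=-ac$ --- the paper's precise choice being $f=c(t)/(s-b(t))=-(s+b(t))/a(t)$, whose zeros sit at $(t_j',-b(t_j'))$ only \emph{after} dividing by $c(t)$ (your displayed $\mathrm{div}(s-b(t))$ places the zeros above the roots of $c$ at $(t_j',-s_j')$ when $s-b(t)$ itself vanishes at $(t_j',+b(t_j'))$, a sign slip in a step you rightly flag as the one needing care, but the mechanism is the paper's). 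The homomorphism property you verify at the end is, in the paper, the content of the lemma following this one rather than of Lemma \ref{l2} itself, and your argument for it (concordant representatives with coprime first coefficients and a common middle coefficient, so that additivity of $\mathrm{div}$ holds on the nose) again matches the paper's.
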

\begin{proof}
We must show that $Q\sim Q'$ implies that $\mathrm{div}(Q)$ is equivalent to $\mathrm{div}(Q')$.
Let $a(t)$ be as in (\ref{A}). 
If $Q'$ results from $Q$ by a  translation  $Q'=Q|T_n$  with $n\in k[t]$, we see that $\mathrm{div}(Q)= \mathrm{div}(Q')$ since from (\ref{eqf1}),  \[b(t_i)+n(t_i)a(t_i)=b(t_i). \]  By Lemma \ref{sl}, we are reduced to showing 
the needed equivalence when $Q'=Q|S=(c,-b,a)$. As in the proof of Lemma \ref{cl}, we know that $c\succ0$, so we have 
\[c(t)=\g^2 \prod_{j=1}^{m'}(t_j'-t)^{n'_j}\] with $\g\in k^{\sbullet}$ and $N'=\sum_{j=1}^{m'} n'_j$, where $n_j'\in \Z^+.$  Thus by (\ref{div}) we need to show that
\[
\mathrm{div} (a,b,c)=\sum_{i=1}^m n_i(t_i,b(t_i))-N\infty\;\;\;\text{and}\;\;\;\mathrm{div} (c,-b,a)=\sum_{j=1}^{m'}n_j'(t'_j,-b(t'_j))-N'\infty
\]
are equivalent.
Define the rational function $f$ on $\mathcal{H}$  by
\[
f=\frac{c(t)}{s-b(t)}=-\frac{s+b(t)}{a(t)}.
\]
Now we will show that \[(f)=\big(\mathrm{div} (c,-b,a)-\mathrm{div} (a,b,c)\big)\] where, as usual, $(f)$ is the divisor of $f$.  This will prove the claim. 

 Suppose first  that $b(t_i)\neq 0$ and $b(t_j')\neq 0.$ Clearly  $( t_i,b(t_i))\neq (t_j',- b(t'_j))$.  The factors \[s-b(t)\;\;\mathrm{ and} \;\;c(t)\] have  zeros of order $n_i$ at $(t_i,b(t_i))$ and  $n_j'$ at $(t_j',\mp b(t'_j))$, respectively. Of the latter, the zeros at $( t'_j,b(t'_j))$ are cancelled by the factor $s-b(t).$
  A similar argument works for any points $(t_i,0)$  or $(t'_j,0)$ that occur. 
 Since $d$ is square-free, $(t_i,0)\neq (t_j',0)$.   
 
 Finally, the behavior at infinity also
matches since $t$ has a double pole at $\infty$.

\end{proof}

\begin{lemma}
The map  $\phi: \Cc_D\rightarrow \J_D$ is a group homomorphism. 
\end{lemma}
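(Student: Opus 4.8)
The plan is to show that composition of classes corresponds to addition of divisor classes, i.e. that $\phi(C\otimes C')=\phi(C)+\phi(C')$ in $\J_D$. The natural way to do this is to exploit the concordant/concordant-form picture from Proposition \ref{clg}: given $C,C'\in\Cc_D$, choose representatives in the special shape $Q=(a,e,a'f)$, $Q'=(a',e,af)$ with $\gcd(a,a')=1$, whose composite is $Q''=(aa',e,f)$. For such representatives the divisors $\mathrm{div}(Q)$, $\mathrm{div}(Q')$, $\mathrm{div}(Q'')$ are built from the roots of $a$, of $a'$, and of $aa'$ respectively, all with the \emph{same} middle polynomial $e$ used to assign the $s$-coordinate $s_i=e(t_i)$. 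Since $a$ and $a'$ are coprime their root sets are disjoint, so as formal sums of points $\mathrm{div}(Q'')+N''\infty = (\mathrm{div}(Q)+N\infty)+(\mathrm{div}(Q')+N'\infty)$ with $N''=N+N'$, and hence $\mathrm{div}(Q'')=\mathrm{div}(Q)+\mathrm{div}(Q')$ literally as divisors, not merely up to equivalence. That is exactly $\phi(C\otimes C')=\phi(C)+\phi(C')$.

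To turn this into a clean argument I would proceed in these steps. First, by Lemma \ref{repp} (applied as in the proof of Proposition \ref{clg}) reduce to representatives $Q,Q'$ of $C,C'$ with $a=Q(1,0)$, $a'=Q'(1,0)$ coprime; this is always possible since any pair of classes contains a concordant pair, and coprimality of the leading coefficients is the generic situation one can force. Second, invoke Proposition \ref{clg}(i) to rewrite these as $(a,e,a'f)$ and $(a',e,af)$ with the common $e$, and note $(aa',e,f)\in C\otimes C'$ by Proposition \ref{clg}(ii). Third, write $a(t)=\alpha^2\prod(t_i-t)^{n_i}$ and $a'(t)=\alpha'^2\prod(t_j'-t)^{n_j'}$ as in \eqref{A}; since $\gcd(a,a')=1$ the $t_i$ are distinct from the $t_j'$, and $aa'(t)=(\alpha\alpha')^2\prod(t_i-t)^{n_i}\prod(t_j'-t)^{n_j'}$. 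Fourth, observe that the $s$-coordinate attached to a root of $a$ (resp. $a'$, resp. $aa'$) in the recipe \eqref{div} is the value at that root of the middle coefficient, which is $e$ in all three forms; hence $\mathrm{div}(aa',e,f)=\mathrm{div}(a,e,a'f)+\mathrm{div}(a',e,af)$ as divisors on $\mathcal{H}$. Fifth, combine with Lemma \ref{l2} (well-definedness of $\phi$) to conclude $\phi(C\otimes C')=\phi(C)+\phi(C')$, and observe that $\phi$ sends the identity class of $(1,0,-D)$ to $0$ by definition, so $\phi$ is a homomorphism.

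I would also separately check that $\phi$ respects inverses as a sanity consistency, though it is automatic once additivity is known: $C^{-1}$ is the class of $(a,-b,c)$ by Proposition \ref{clg}(ii), and $\mathrm{div}(a,-b,c)=\sum n_i(t_i,-b(t_i))-N\infty$, which is the image of $\mathrm{div}(a,b,c)$ under the hyperelliptic involution, i.e. its negative in $\J_D$. One has to be slightly careful at a root $t_i$ of $a$ with $b(t_i)=0$: there the point $(t_i,0)$ is its own conjugate and is forced to occur with multiplicity one in a semi-reduced divisor, which is consistent with $a$ being square-free in $b^2-ac=D$ at such a prime and matches the recipe. These degenerate points are exactly the place where the argument could get fiddly.

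The main obstacle is the reduction step: one must be sure that every pair of classes $C,C'$ admits representatives with $Q(1,0)$ and $Q'(1,0)$ genuinely coprime (not just the concordance condition $\gcd(a,a',b+b')=1$), so that the root sets are disjoint and the divisor identity is an honest equality rather than an equivalence. This follows from Lemma \ref{repp}(iii) — one can make a form properly represent a polynomial coprime to any fixed target, here coprime to the leading coefficient of a chosen representative of the other class — but spelling out that two independent applications of Lemma \ref{repp} can be made compatible with the concordance needed for Proposition \ref{clg} is the one place that requires genuine care. Everything after that is bookkeeping with \eqref{A} and \eqref{div}.
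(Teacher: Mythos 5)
Your proof is correct and follows essentially the same route as the paper: choose concordant representatives $(a,e,a'f)$ and $(a',e,af)$ with $a,a'$ coprime (and prime to $D$) via Lemma \ref{repp} and Proposition \ref{clg}, so that the composite $(aa',e,f)$ has divisor equal, literally, to the sum of the two divisors. The reduction step you flag as the main obstacle is handled exactly as you suggest --- apply Lemma \ref{repp}(iii) once to make $a$ prime to $D$ and again to make $a'$ prime to $aD$, after which coprimality of $a$ and $a'$ already forces concordance --- so there is no gap.
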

\begin{proof}
By Lemma \ref{repp} and Proposition \ref{clg},  any two classes $C,C'\in \Cc_D$ contain forms $Q=(a,b,ca')$ and $Q'=(a',b,ac)$, resp., where $a,a'$ are coprime and prime to $D$,
and the composition of $C$ and $C'$ contains the form $Q''=(aa',b,*)$.  Thus $\mathrm{div}(Q)+\mathrm{div}(Q')= \mathrm{div}(Q'').$
\end{proof}
Proposition \ref{the1} now follows from the next lemma. 
\begin{lemma}
The map  $\phi: \Cc_D\rightarrow \J_D$  is bijective.
\end{lemma}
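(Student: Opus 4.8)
The plan is to read off both directions explicitly through the Mumford representation of divisors on $\mathcal{H}$: on the form side one uses the reduction theory already in hand (Proposition \ref{min}), and on the divisor side the standard reduction theory for hyperelliptic curves.

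\emph{Surjectivity.} Fix a class $\delta\in\J_D$. Since $\mathcal{H}$ has genus $g$ (with $|D|=2g+1$) and $\infty$ is a $k$-rational Weierstrass point, the theory of reduced divisors on a hyperelliptic curve with a rational Weierstrass point (Mumford \cite{Mum}, Cantor \cite{Can1}) — which combines Riemann--Roch with the reduction step $\mathrm{div}(t-t_0)=(t_0,s_0)+(t_0,-s_0)-2\infty$ — shows that $\delta$ has a \emph{unique} representative $E-m\infty$ with $E\ge0$ semi-reduced, $\infty\notin\mathrm{supp}(E)$, and $m=\deg E\le g$. By this uniqueness the divisor $E-m\infty$ is Galois-stable, hence defined over $k$. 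Write $E=\sum_i n_i(t_i,s_i)$ with the $t_i\in\bar k$ distinct and $m=\sum_i n_i$, and set $a(t)=\prod_i(t_i-t)^{n_i}=(-1)^m\prod_i(t-t_i)^{n_i}\in k[t]$; then $a^*$ is monic, so $a\succ0$. Because $E$ is semi-reduced and supported on $\mathcal{H}$, at each $t_i$ with $D(t_i)\neq0$ the polynomial $D$ has, by Hensel's lemma (here $\mathrm{char}\,k\neq2$), a unique square root modulo $(t-t_i)^{n_i}$ congruent to $s_i$, while if $D(t_i)=0$ then necessarily $n_i=1$ and $s_i=0$; patching these congruences by the Chinese Remainder Theorem yields a unique $b\in k[t]$ with $\deg b<m$, $b(t_i)=s_i$ for all $i$, and $a\mid b^2-D$. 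Then $Q:=(a,b,(b^2-D)/a)\in\Qc_D$ has discriminant $D$ and $\mathrm{div}(Q)=\sum_i n_i(t_i,s_i)-m\infty=E-m\infty$, so $\phi$ of the class of $Q$ equals $\delta$ (with $\delta=0$ coming from $(1,0,-D)$).

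\emph{Injectivity.} By Proposition \ref{min}, each $C\in\Cc_D$ contains a unique form $Q_C=(a,b,c)$ with $|b|<|a|<\tfrac{1}{2}|D|$ and $a^*$ monic; thus $\deg a\le g$, and as recorded after (\ref{div}) the divisor $\mathrm{div}(Q_C)=\sum_i n_i(t_i,b(t_i))-(\deg a)\infty$ is semi-reduced, hence it is precisely the reduced representative of $\phi(C)$. So if $\phi(C)=\phi(C')$, the uniqueness of reduced divisors forces $\mathrm{div}(Q_C)=\mathrm{div}(Q_{C'})$, and from this common divisor one recovers successively $a$ (the monic polynomial $\prod_i(t-t_i)^{n_i}$, with sign pinned down by the requirement that $a^*$ be monic), then $b$ (the unique interpolating polynomial of degree $<\deg a$ taking the prescribed values $b(t_i)$, as in the surjectivity step), then $c=(b^2-D)/a$. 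Therefore $Q_C=Q_{C'}$ and $C=C'$. Together with the preceding lemma that $\phi$ is a homomorphism, this shows $\phi$ is an isomorphism, proving Proposition \ref{the1}.

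\emph{Main obstacle.} The only genuinely nontrivial input is the uniqueness of the reduced representative of a degree-zero divisor class, which is the classical reduction theory of Mumford and Cantor; the rest is bookkeeping — matching the explicit divisor $\mathrm{div}(Q)$ of (\ref{div}) with the standard $(a,b)$-data of a semi-reduced divisor, tracking the sign $(-1)^m$ forced by the normalization $a\succ0$, and checking that the Galois-stability of the unique reduced divisor descends the construction of $a,b,c$ to $k[t]$.
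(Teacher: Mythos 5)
Your proof is correct and follows essentially the same route as the paper: it combines the unique reduced form in each class (Proposition \ref{min}) with the uniqueness of the reduced (Mumford--Cantor) representative of each degree-zero divisor class, which is exactly the Riemann--Roch/Cantor input the paper invokes, merely spelling out explicitly the dictionary between the $(a,b)$-data of a semi-reduced divisor and the reduced form $(a,b,(b^2-D)/a)$, including the Galois descent and the Hensel/CRT determination of $b$.
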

\begin{proof}
By Proposition \ref{min}, every class $C\in \Cc_D$ contains a unique form $Q=(a,b,c)$ where
 \[|b|<|a|\leq g,\] with $(-1)^{|a|}a$  monic. By the Riemann-Roch theorem (see e.g. \cite{Lan0}) and a straightforward argument (c.f. \cite[pp. 96-97]{Can1}), every divisor class has a unique representative of the form $\mathrm{div}(Q)$ for such a $Q$.
 Here $b$ is  uniquely determined by the condition $|b|<|a|$.
\end{proof}

\begin{prop}\label{mw}
For $k$ a number field, $D\in k[t]$  square-free and negative of odd degree, the group $\Cc_D$ is finitely generated.
\end{prop}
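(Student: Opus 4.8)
The plan is to deduce the statement directly from the Mordell--Weil theorem via the isomorphism just established. By Proposition \ref{the1}, the map $Q\mapsto \mathrm{div}(Q)$ gives a group isomorphism $\Cc_D\cong \J_D$, where $\J_D$ is the group of $k$-rational divisor classes of degree $0$ on the smooth complete hyperelliptic curve $\mathcal{H}$ with function field $k(t,s)$, $s^2=D(t)$. So it suffices to show that $\J_D$ is finitely generated.

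First I would observe that $\J_D$ is (canonically) a subgroup of the group of $k$-rational points of the Jacobian variety $J$ of $\mathcal{H}$: since $\mathcal{H}$ has a $k$-rational point (indeed the point $\infty$ at infinity, the double pole of $t$, is defined over $k$), the Abel--Jacobi map based at $\infty$ identifies $\mathrm{Pic}^0(\mathcal{H})(k)$ with a subgroup of $J(k)$, and in fact with all of $J(k)$ because the presence of a rational point forces $\mathrm{Pic}^0 = \mathrm{Pic}^0$-of-$k$-points to agree with $J(k)$ (there is no Brauer-obstruction when a rational point exists). Then the Mordell--Weil theorem, applied to the abelian variety $J$ over the number field $k$, asserts that $J(k)$ is a finitely generated abelian group. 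Pulling this back through the isomorphism of Proposition \ref{the1} gives that $\Cc_D$ is finitely generated.

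I expect no serious obstacle here: the whole point of the preceding subsection was to reduce this to Mordell--Weil, and the only thing one must be slightly careful about is the identification of $\J_D$ (rational divisor classes) with $J(k)$ (rational points of the Jacobian), which is precisely where the existence of the rational point $\infty$ is used. If one prefers to avoid even this much, one can instead invoke the Lang--N\'eron theorem directly: the elliptic/hyperelliptic situation here is that $\mathcal{H}$ is a curve over the function field $k(t)$, and the relevant finite generation statement for $\mathrm{Pic}^0$ over a finitely generated field follows from Lang--N\'eron. Either route is standard, and the statement to record is simply:

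\begin{proof}
By Proposition \ref{the1} the group $\Cc_D$ is isomorphic to $\J_D$, the group of $k$-rational divisor classes of degree zero on the complete smooth hyperelliptic curve $\mathcal{H}\colon s^2=D(t)$. Since $\mathcal{H}$ has the $k$-rational point $\infty$, the group $\J_D$ is isomorphic to the group $J(k)$ of $k$-rational points on the Jacobian variety $J$ of $\mathcal{H}$. As $k$ is a number field, the Mordell--Weil theorem asserts that $J(k)$ is a finitely generated abelian group. Hence $\Cc_D\cong \J_D\cong J(k)$ is finitely generated.
\end{proof}
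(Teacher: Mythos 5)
Your proof is correct and follows exactly the route the paper takes: the paper's own proof is the one-line "This follows from Proposition \ref{the1} and the Mordell--Weil Theorem." The extra care you take in identifying $\J_D$ with $J(k)$ via the $k$-rational point $\infty$ is a standard detail the paper leaves implicit (its cited form of Mordell--Weil, due to Weil, is already stated for rational divisor classes), so nothing essential differs.
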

\begin{proof}
This follows from Proposition \ref{the1} and the Mordell-Weil Theorem 
\cite{Wei}, \cite{Lan}.
\end{proof}

\subsubsection*{Remark}
As already mentioned,  that a result like Proposition \ref{the1} should hold can be inferred from the paper of D. Cantor \cite{Can1} (see also \cite{Mum}). However, the  isomorphism we give does not seem to be well-known.

\subsection*{Proof of  Theorem \ref{t0}}
i) ($\J_q$ finite implies that (\ref{eqt1}) holds.) Suppose that $\mathcal{J}_q$ is finite. By Proposition \ref{the1} we know that $\Cc_q$ is finite. Since $q$ is a prime, 
it follows from Proposition \ref{amb} that  $\Cc_q$ has no classes of positive even order and so any class $C\in\Cc_q$ is in $\Cc_q^2$.   Let $p$ be any prime different from $q$.
If $\big(\tfrac{q}{p}\big)=1$ then $q=b^2+pc$ for some $b,c,$ so $Q=(p^*,b,\pm c)\in \Qc_{q}$.  Thus since $C\in\Cc_q^2$, we have that $\chi_q(Q)=(p^*,q)_q=1$ and $\big(\tfrac{p^*}{q}\big)=1$.

ii) ($\J_q$ infinite implies (\ref{eqt1}) does not hold.)
By  Proposition \ref{the1} again, if $\mathcal{J}_q$ is infinite so is  $\Cc_q$ and by Proposition \ref{mw} it must contain a class $C$ not in $\Cc_q^2.$ Let $Q(x,y)=(a,b,c)\in C$,  so $b^2-ac=q.$ By Proposition \ref{min} and the proof of Lemma \ref{cl}, we may assume that $|b|<|a|<|c|$
with $c^*$ monic. 
The polynomial
\[
F(x,t)=a(t)x^2+2b(t)x+c(t)
\]
is irreducible in $k[y,t]$.  To see this, observe that $\gcd(a,b,c)=1$ and, as a quadratic in $x$, the discriminant of $F$ is $4q(t),$ which is not a square.
By Hilbert's irreducibility theorem \cite{Hil}, \cite{Lan} there are infinitely many  $x\in k$ such that 
$F(x,t)=Q(x,1)$ is irreducible in  $k[t].$ For any  such  $x$, it  follows from Lemma \ref{repp} that $C$ contains the form $Q'=(p^*, b',c')$, where  \[p(t)=(-1)^{|c|}Q(x,1)\neq q(t)\] is prime. Thus $q=b'^2-p^*c'$ and so $\big(\tfrac{q}{p}\big)=1.$  
Since $C\not\in \Cc_q^2$, by the principal genus theorem Proposition \ref{prg}, we must have that $\chi_q(Q')=(p^*,q)_q\neq 1.$ Thus $\big(\tfrac{p^*}{q}\big)\neq1$.
\qed

\subsubsection*{Remark} It is easy to modify the above argument to produce infinitely many primes $p$  of either odd or even degree with $\big(\tfrac{p^*}{q}\big)\neq1$. 

\subsection*{Proof of  Theorem \ref{t1}}
By Proposition \ref{the1}, 
we require a  characterization of those classes in $\Cc_D$ of order at most 2 that are contained in $\Cc_D^2$. 

\smallskip\noindent
 Claim: the number of such classes is given by
$2^{e_2(\J)}$, where as before we write $\J=\J_D.$
To see this, let  $m$  be the number of primary cyclic subgroups  of order 2.    The number of classes $C$  for which $C^4=1$ is \begin{equation}\label{las}2^{m}4^{e_2(\J)}=2^{m+e_2(\J)}2^{e_2(\J)}=2^{e_1(\J)}2^{e_2(\J)}=\#\,(\text{classes of order at most 2})\times 2^{e_2(\J)}.\end{equation}
For each class in $C_2\in \Cc_D$ of order at most 2  contained in $\Cc_D^2$,  choose a fixed $C_1\in \Cc_D$ with $C_1^2=C_2$.  By multiplication of each $C_1$ by all classes of order at most 2, we get all   classes $C$ with $C^4=1.$
The claim follows by a comparison with (\ref{las}). 

  Proposition \ref{prg} implies that $2^{e_2(\J)}$ is the number of classes of order at most 2 that are contained in the principal genus of $\Cc_D.$
By (the proof of) Proposition \ref{amb}, these classes  are  in one-to-one correspondence with the   ambiguous forms $(D_1,0,-D_2)$ that are killed by all genus characters, where $D=D_1D_2$ is as in the statement of the theorem. Using the definition of the genus characters given in (\ref{gc}), 
these forms correspond to the decompositions described in Theorem \ref{t1}.

\qed
\section{Binary cubic forms over $k[t]$}\label{s4}

\subsection*{Preliminaries}
Many of the general identities and transformation properties of binary cubic forms over $\Z$  can be readily adapted  to forms with coefficients in $k[t]$. 
Some references for the classical theory are \cite{Hilb} and \cite{Sa}. 
For $a,b,c,d\in k[t]$ let
\[
F(u,v)=au^3+3bu^2v+3cuv^2+dv^3=(a,b,c,d)(u,v).
\]
Its discriminant is 
\[
\D=\D_F=a^2d^2-6abcd+4b^3d+4ac^3-3b^2c^2=(ad-bc)^2-4(ac-b^2)(bd-c^2).
\]
The Hessian of $F$ is
\begin{equation}\label{hes}
H_F(u,v)=(ac-b^2)u^2+(ad-bc)uv+(bd-c^2)v^2=(ac-b^2, \tfrac{1}{2}(ad-bc),bd-c^2)(u,v)
\end{equation}
and the cubic covariant is
\[
J_F(u,v)=(2b^3-3abc+a^2d,b^2c-2ac^2+abd,-bc^2+2b^2d-acd,-2c^3+3bcd-ad^2).
\]
The discriminant of $H_D$ is $D=\frac{1}{4}\D_F$.
The group  $\mathrm{SL}_2(k[t])$ acts  as expected on $F, H_F, J_F$: thus
 for $M=\left( \begin{smallmatrix}m& r\\n&s \end{smallmatrix}\right)$  we have   $F|M(x,y) =F(mx+ny,rx+sy)$.
 The discriminant $\D_F$ is invariant and 
the forms $H_F$ and $J_F$ are covariants of $F$, meaning that $H_{F|M}=H_F|M$ and $J_{F|M}=J_F|M$.
The principal form of discriminant $\D=4D$ 
\begin{equation}\label{Fp}
F_{0}=(0,1,0,D)
\end{equation}
has $-H_0\defeq -H_{F_0}=(1,0,-D)$ and $\tfrac{1}{2}J_{0} \defeq \tfrac{1}{2}J_{F_0}=(1,0,D,0)$.

\subsection*{The syzygy and its converse}
After Eisenstein \cite{Eis} and Cayley \cite{Cay},  a cubic form and its covariants  satisfy the syzygy
\[
J_F^2=-4H_F^3+\D F^2,
\]
which can be checked directly.
Setting $X=-H_F$, $Y=\tfrac{1}{2}J_F$ and $Z=F$ we have
\begin{equation}\label{ssyz}
Y^2=X^3+DZ^2,
\end{equation}
where $D=\tfrac{1}{4}\D$ is the discriminant of $H_F$.
The following converse result is crucial. Over $\Z$ the corresponding result  is due to Arndt \cite{Arn}   and Mordell \cite{Mor0}, \cite[p.216]{Mor}.

\begin{prop}\label{momo}
Suppose that $D\in k[t]$ is square-free. If $X,Y,Z\in k[t]$ satisfy $\gcd(X,Z)=\gcd(Y,Z)=1$ and 
\begin{equation*}\label{mo1}
Y^2=X^3+DZ^2,
\end{equation*}
then there exists a binary cubic form $F=(Z,b,c,d)$ with $b,c,d\in k[t]$ and discriminant $4D$ for which $-H_F=(X,*,*)$ and $\tfrac{1}{2}J_F=(Y,*,*,*).$ \end{prop}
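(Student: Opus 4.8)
The plan is to invert the syzygy construction directly at the level of polynomials, imitating the classical Arndt--Mordell argument. Given $X,Y,Z\in k[t]$ with $\gcd(X,Z)=\gcd(Y,Z)=1$ and $Y^2=X^3+DZ^2$, I want to produce $b,c,d\in k[t]$ so that $F=(Z,b,c,d)$ has discriminant $4D$ and the leading coefficients of $-H_F$ and $\tfrac12 J_F$ are $X$ and $Y$ respectively. Reading off the formula (\ref{hes}) for the Hessian of $F=(Z,b,c,d)$, the leading coefficient of $-H_F$ is $b^2-Zc$, and from the formula for $J_F$ the leading coefficient of $\tfrac12 J_F$ is $\tfrac12(2b^3-3Zbc+Z^2d)=b^3-\tfrac32 Zbc+\tfrac12 Z^2 d$ (after dividing by $2$). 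So the goal reduces to solving, for $b,c,d\in k[t]$,
\begin{align}\label{eqprop}
b^2-Zc&=X,\\\nonumber
b^3-\tfrac32 Zbc+\tfrac12 Z^2 d&=Y,
\end{align}
together with the requirement $\D_F=4D$. First I would note that once $b$ is chosen so that $b^2\equiv X\pmod Z$ and the first equation is used to \emph{define} $c=(b^2-X)/Z$, substituting into the second and using $bc=(b^3-bX)/Z$ turns it into $b^3-\tfrac32(b^3-bX)+\tfrac12 Z^2 d = Y$, i.e. $\tfrac12 Z^2 d=Y-\tfrac32 bX+\tfrac12 b^3$; so $d$ is forced once we know $Z^2\mid (2Y-3bX+b^3)$.

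The existence of a suitable $b$ is where the hypotheses $\gcd(X,Z)=\gcd(Y,Z)=1$ and the square-freeness of $D$ enter. From $Y^2\equiv X^3\pmod{Z}$ and $\gcd(X,Z)=1$ one gets that $X$ is a square times a unit modulo each prime power dividing $Z$; the standard trick (used over $\Z$ by Arndt and Mordell) is to choose $b$ with $b^2\equiv X\pmod{Z^2}$ and, simultaneously, $b\equiv X^{-1}Y\pmod{Z}$ in the appropriate sense, so that $b^3\equiv XY\cdot X^{-1}\equiv$ matches $X\cdot b$ and forces the congruence $b^3-3bX+2Y\equiv 0\pmod{Z^2}$ one prime at a time. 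Concretely I would work prime-by-prime: for each prime $p\mid Z$ with $p^e\parallel Z$, pick a square root $\beta$ of $X$ mod $p^{2e}$ (possible since $X$ is a unit and a square mod $p$, using $Y^2=X^3$; lifting via Hensel over $k[t]/(p)$ which is a field), adjust the sign of $\beta$ so that $\beta^3\equiv \beta X\equiv XY\,\beta^{-1}$... — more precisely so that $\beta\equiv Y/X\pmod{p^e}$ — and then $b^3-3bX+2Y\equiv \beta(\beta^2-3X)+2Y\equiv \beta(-2X)+2Y\equiv -2X(Y/X)+2Y\equiv 0\pmod{p^e}$, with a second-order Hensel lift pushing this to $p^{2e}$; then combine the local choices by CRT to get a single $b\in k[t]$ with $\deg b<2\deg Z$ (say), $b^2\equiv X\pmod{Z^2}$ and $b^3-3bX+2Y\equiv 0\pmod{Z^2}$. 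That makes $c=(b^2-X)/Z$ and $d=(2Y-3bX+b^3)/Z^2$ polynomials.

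It remains to check that this $F=(Z,b,c,d)$ actually has discriminant $4D$, equivalently that $D_{H_F}=D$. Here I would invoke the syzygy (\ref{ssyz}) in reverse: $F$ satisfies $(\tfrac12 J_F)^2=(-H_F)^3+\tfrac{\D_F}{4}F^2$ identically, so evaluating the leading-coefficient information and comparing with the given identity $Y^2=X^3+DZ^2$ pins down $\D_F/4=D$ provided $Z\ne 0$ and provided $-H_F$ and $\tfrac12 J_F$ genuinely have $X,Y$ as leading coefficients in the compatible sense — which is exactly how $b,c,d$ were constructed. More carefully: both $(\tfrac12 J_F)^2-(-H_F)^3$ and $DZ^2\cdot(F^2/Z^2)=DF^2$ ... one shows the quadratic form $-H_F$ has the same discriminant as $(X,0,-D/X)$ style normalization; since $-H_F=(X,*,*)$ with discriminant $D_{H_F}$, and the syzygy forces $D_{H_F}Z^2$ to equal the leading behavior matching $DZ^2$, we conclude $D_{H_F}=D$. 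The main obstacle, and the place requiring genuine care rather than formula-pushing, is the local construction of $b$: ensuring the sign/unit choices at each prime $p\mid Z$ are mutually compatible so that the single lifted congruence $b^3-3bX+2Y\equiv 0\pmod{Z^2}$ holds globally, which is precisely where $\gcd(Y,Z)=1$ (so $Y/X$ makes sense mod each $p^e$) and square-freeness of $D$ (so that $p^2\nmid D$ prevents obstructions) are used; everything after that is forced and the discriminant identity is then a formal consequence of the syzygy.
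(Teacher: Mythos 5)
Your construction is correct in substance but takes a genuinely different route from the paper. The paper inverts the syzygy by working modulo powers of $X$: since $D$ is a square modulo $X$, it chooses a quadratic form $(X,B,C)$ with $B^2-XC=D$, normalizes $B$ by $ZB\equiv -Y\pmod{X^3}$ as in (\ref{supp}), and then writes down Arndt's explicit formulas (\ref{cc}) for $b,c,d$ with denominators $X,X^2,X^3$, proving integrality by congruences modulo powers of $X$ and verifying $\Delta_F=4D$, $-H_F=(X,B,C)$, $\tfrac12 J_F=(Y,*,*,*)$ by direct computation. You instead work modulo powers of $Z$: you need $Z\mid b^2-X$ and $Z^2\mid b^3-3bX+2Y$, define $c$ and $d$ by exact division, and obtain $\Delta_F=4D$ formally from the syzygy (\ref{ssyz}) evaluated at $(u,v)=(1,0)$ (this needs $Z\neq 0$, which deserves a sentence; if $Z$ is a nonzero constant your formulas need no congruences at all, and $Z=0$ is a degenerate case handled separately). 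Your route buys a cheaper discriminant verification; the paper's route additionally produces the form $(X,B,C)$ with the normalization (\ref{supp}) that pins down its class, which is exploited later when the class $C_P$ is attached to a point $P$. One step of yours needs repair, though it is easily fixed and in fact simplifies the argument: the prime-by-prime Hensel construction is both heavier and, as written, incomplete, since from $\beta^2\equiv X\pmod{p^{2e}}$ and $\beta\equiv YX^{-1}\pmod{p^{e}}$ you only get $p^{e}\mid\beta^3-3\beta X+2Y$ directly; to reach $p^{2e}$ you must observe that $\beta+YX^{-1}\equiv 2YX^{-1}$ is a unit mod $p$ (this is where $\gcd(Y,Z)=1$ enters), forcing $\beta\equiv YX^{-1}\pmod{p^{2e}}$. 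Better, skip the local analysis entirely: take $b\equiv YX^{-1}\pmod{Z^2}$, which exists since $\gcd(X,Z)=1$; then $Y^2\equiv X^3\pmod{Z^2}$ gives $b^2\equiv X$ and $b^3\equiv Y\pmod{Z^2}$, hence $b^3-3bX+2Y\equiv Y-3Y+2Y\equiv 0\pmod{Z^2}$, so both divisibilities hold at once and the rest of your argument goes through verbatim.
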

\begin{proof}
First note that $\gcd(X,Y)=1$ since $D$ is square-free. Thus both $X$ and $Y$ are prime to $D$.
From (\ref{ssyz}) we see that $D$ is a square modulo $X$ so  there exists a binary form $(X,B,C)$ with $B^2-XC=D$.   We have that $\gcd(Z,X)=1$ and from (\ref{ssyz}) we may suppose that \begin{equation}\label{supp}ZB\equiv -Y\pmod{X^3}.\end{equation}
The class of $(X,B,C)$ is uniquely determined once we force $B$ to satisfy condition (\ref{supp}).
 Clearly $X\neq 0$.  After Arndt \cite{Arn}, set
\begin{equation}\label{cc}
b=\tfrac{1}{X}(Y+BZ)\;\;\;c=\tfrac{1}{X^2}(DZ+2YB+ZB^2)\;\;\;d=\tfrac{1}{X^3}(YD+3BDZ+3YB^2+ZB^3).
\end{equation}
A calculation shows that the discriminant of $F=(Z,b,c,d)$ is $4D$, that $-H_F=(X,B,C)$ and that $\tfrac{1}{2}J_F=(Y,*,*,*).$

We need to show that $b,c,d\in k[t].$
Clearly $b\in k[t].$ 

To see that $c\in k[t],$
consider
\begin{align*}
U\times V=&(DZ+2YB+ZB^2)(DZ-2YB+ZB^2)\\&=Z^2(D+B^2)^2-4Y^2B^2\\
&\equiv Z^2(D+B^2)^2-4DZ^2B^2 \pmod{X^3} \;\;\text{by (\ref{ssyz})}\\&=Z^2(B^2-D)^2\equiv 0 \pmod{X^2}.
\end{align*}
Now $U$ and $V$ cannot share any primes with $X$ since $U-V=4YB.$
That $c\in k[t]$ will follow  if we can show that $X|ZU.$ From $X|(ZB+Y)$ we have that \[ZU\equiv Z(DZ+YB)\equiv Y(Y+ZB)\equiv 0 \pmod{X}.\]

Finally, from (\ref{cc}) we must prove that 
\[
X^3d=YD+3BDZ+3YB^2+ZB^3\equiv 0\pmod{X^3}.
\]
Using (\ref{supp}) we have that 
\[
Z^2X^3d\equiv Z^2YD-3YDZ^2+(3Y^3-Y^3)\equiv 2Y(Y^2-DX^2)\equiv 0\pmod {X^3},
\]
as desired. 
\end{proof}

%

%

\begin{prop}\label{cayy}
Suppose that $D\in k[t]$ is square-free and negative with odd degree. Let $F=(a,b,c,d)$ with $a,b,c,d\in k[t]$ have discriminant $4D$. Then $Q=-H_F=(A,B,C)\in \Qc_D$ and its class  has order 1 or 3 in $\Cc_D.$ 
The only other cubic form with Hessian $-Q$ is $-F$. 
\end{prop}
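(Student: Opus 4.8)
The statement has three parts, which I would establish in turn.

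\emph{First, that $Q=-H_F$ lies in $\Qc_D$.} The discriminant of $Q$ equals $\tfrac14\D_F=D$, so only the conditions $A\neq0$ and $A\succ0$ remain, where $Q=(A,B,C)$ and $A=b^2-ac$. If $a=0$ then $A=b^2$, a nonzero square (nonzero, as $a=b=0$ would force $\D_F=0$), and we are done. If $a\neq0$, put $v=0$ in the syzygy $J_F^2=-4H_F^3+\D_F F^2$: since $H_F(u,0)=-Au^2$, $F(u,0)=au^3$ and $J_F(u,0)=j\,u^3$ with $j=2b^3-3abc+a^2d$, this reads $j^2=4A^3+4Da^2$. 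Here $A\neq0$, for $A=0$ would give $D=B^2$, impossible as $D$ is square-free of positive degree. Now $j^2$ is a square hence positive, $-4Da^2\succ0$ because $-D\succ0$, and, crucially, since $|D|$ is odd the degrees of $j^2$ and of $4Da^2$ have opposite parity, hence are different; therefore the leading coefficient of $4A^3=j^2+(-4Da^2)$ is that of one positive polynomial, so $A^3\succ0$ and $A\succ0$. Thus $Q\in\Qc_D$; that $C\succ0$ then follows from Lemma~\ref{cl}.

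\emph{Second, that $[Q]$ has order $1$ or $3$.} This is the heart of the matter, and I would adapt the classical argument of Eisenstein and Cayley through Gauss composition over $k[t]$. Dividing the syzygy by $4$ and using $-H_F=Q$, $\D_F=4D$ gives (the characteristic being $\neq2$) the polynomial identity $Q(u,v)^3=\big(\tfrac12 J_F(u,v)\big)^2-D\,F(u,v)^2$. Hence for every $(u,v)$ the value $n=Q(u,v)$ satisfies $n^3=y^2-Dz^2=N_{\Oc/k[t]}(y+z\sqrt D)$ with $y=\tfrac12 J_F(u,v)$, $z=F(u,v)$, where $\Oc=k[t][\sqrt D]$ is the (Dedekind) ring of functions on $\H$ regular off $\infty$. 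To convert this into $[Q]^3=1$: using Lemma~\ref{repp}\,iii) pick a primitive $(u_0,v_0)$ with $n=Q(u_0,v_0)$ prime to $D$; then automatically $\gcd(n,z)=1$, since a common prime $p$ of $n$ and $z$ would, by the identity at $(u_0,v_0)$, be a common zero of $\bar F$ and $\bar H_F$ modulo $p$, impossible when $p\nmid D$ (then $\bar F$ has distinct roots and is prime to its Hessian). Replace $Q$ by the properly equivalent form $(n,\beta_0,\gamma_0)$ (Lemma~\ref{repp}\,i)) and adjust $\beta_0$ by a $k[t]$-multiple of $n$ (a $T_m$ from (\ref{eqf1}), possible as $n$ is prime to $z$) so that the new $\beta$ satisfies $z\beta\equiv -y\pmod{n^3}$. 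Then one is in the situation of the proof of Proposition~\ref{momo} read backwards: with $\beta$ so normalized, the Arndt formulas (\ref{cc}) show $y+z\sqrt D$ generates the cube of the ideal of $\Oc$ attached to $(n,\beta,\gamma)$, so $[Q]^3$ is the principal class (if the sign of $\beta_0$ comes out the other way one gets $[Q]^{-3}=1$ instead, which is the same conclusion). The main obstacle is precisely this bookkeeping — getting $n$ prime to $D$ hence to $z$, keeping the adjusted form in $\Qc_D$, solving the congruence, and identifying the ideal cube with $(y+z\sqrt D)$ — all routine given Lemma~\ref{repp}, Proposition~\ref{clg} and the computation already made for Proposition~\ref{momo}.

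\emph{Third, that the only other cubic with Hessian $-Q$ is $-F$.} Suppose $G\in k[t][u,v]$ is a cubic with $-H_G=Q$, so $H_G=H_F$; then $\D_G=4D=\D_F$, and subtracting the two syzygies gives, over $L=k(t)(\sqrt D)$ (a quadratic extension since $D$ is not a square), $(J_F-2\sqrt D\,F)(J_F+2\sqrt D\,F)=(J_G-2\sqrt D\,G)(J_G+2\sqrt D\,G)=-4H_F^3$. Because $\D_F\neq0$, $F$ and $H_F$ are coprime as binary forms over $k(t)$, hence so are $J_F$ and $F$ (a common zero of $J_F,F$ is, by the syzygy, a common zero of $H_F,F$); likewise for $G$. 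So in each factorization the two conjugate factors are coprime, and since $H_F$ splits over $L$ into conjugate linear factors $\ell,\bar\ell$, we get $J_F-2\sqrt D\,F=\eta\,\ell^3$ and $J_G-2\sqrt D\,G=\eta'\,\ell^3$ (matching factors, possibly after the swap $\ell\leftrightarrow\bar\ell$ for $G$) with $\eta,\eta'\in L^{\sbullet}$; dividing and taking traces back down to $k(t)$ gives $G=rJ_F+sF$ with $r,s\in k(t)$. Imposing $H_G=H_F$ and using the covariant identities $H_{J_F}=-\D_F\,H_F$ and the vanishing of the mixed Hessian of $F$ and $J_F$ (a binary cubic has no quadratic covariant of the requisite weight) one computes $H_{rJ_F+sF}=(s^2-\D_F r^2)H_F$, whence $s^2-D(2r)^2=1$. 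A content argument forces $r,s\in k[t]$ (a prime $p$ in a common denominator would make $\bar F$ and $\bar J_F$ proportional modulo $p$, impossible for $p\nmid D$ and excludable for $p\mid D$ using that $F,J_F$ are coprime), and then this polynomial Pell equation, since $|D|$ is odd, has only the solutions $r=0$, $s=\pm1$ by the remark following Lemma~\ref{ls1}; hence $G=\pm F$.
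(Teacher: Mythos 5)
Your part (i) (that $Q\in\Qc_D$) is essentially the paper's own argument --- specialize the syzygy and compare leading terms using that $|D|$ is odd --- and is fine. The serious problem is part (ii). The paper proves that $[Q]$ has order dividing $3$ by exhibiting Cayley's explicit bilinear identity (\ref{ord3}), which expresses $(A,B,C)(u,v)\cdot(A,B,C)(u',v')$ as a value of $(A,-B,C)$ in a form meeting the hypotheses of Proposition \ref{clg2}; this gives $[Q]\otimes[Q]=[Q]^{-1}$ directly inside the group $\Cc_D$ as it is actually defined (Dirichlet composition of concordant forms, Proposition \ref{clg}). Your argument instead concludes $[Q]^3=1$ from the principality of the cube of an ideal of $\Oc=k[t][\sqrt{D}]$. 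But no dictionary between $\Cc_D$ and an ideal class group of $\Oc$ is established anywhere --- the paper identifies $\Cc_D$ with the divisor class group $\J_D$ of the curve, and even that comes with a nontrivial proof (Proposition \ref{the1}) and is not the same statement. Showing that Dirichlet composition of forms corresponds to multiplication of the associated $\Oc$-ideals, and that the normalization $a\succ0$ matches the correct ``narrow'' ideal classes, is precisely the content you would need; it is not bookkeeping, it is the theorem. There is also a concrete obstruction in your normalization step: a translation $T_m$ moves $\beta_0$ only within its residue class modulo $n$, so the congruence $z\beta\equiv -y\pmod{n^3}$ is solvable by such a move only if $z\beta_0\equiv -y$ already holds modulo $n$. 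For an arbitrary representative $(n,\beta_0,\gamma_0)$ of the class one only knows $(z\beta_0)^2\equiv y^2\pmod n$, and choosing the other square root of $D$ modulo a prime factor of $n$ generally changes the class; so this compatibility must be proved, not assumed.

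Your part (iii) is a genuinely different route from the paper's: you factor the syzygy over $k(t)(\sqrt{D})$, descend to $G=rJ_F+sF$ with $r,s\in k(t)$, use the covariant identity $H_{rJ_F+sF}=(s^2-\Delta_F r^2)H_F$, and finish with the triviality of the polynomial Pell equation for odd-degree $D$. The paper instead writes down the explicit matrix $M\in\mathrm{SL}_2(k(t))$ carrying $F$ to $F'$ via (\ref{ano}), proves its entries lie in $k[t]$ by producing the same entries with denominators $A$, $B$, $C$ and using $\gcd(A,C)=1$, and concludes from the triviality of automorphs (Lemma \ref{ls1}). Your route is attractive and buys a cleaner conceptual picture, but as written it leans on two under-justified steps: the coprimality statements needed to match factors over $k(t)(\sqrt{D})$, and especially the content argument that $r,s\in k[t]$, where the cases $p\mid s_1$ (forcing $J_F\equiv 0$) and $p\mid D$ are waved at rather than excluded. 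Tighten part (ii) --- ideally by the composition identity (\ref{ord3}) --- and supply the integrality argument in part (iii), and the whole proof will stand.
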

\begin{proof}
To show that $Q\in \Qc_D$, first note that the discriminant of $Q$ is $D$. 
We also must have $A\succ0$. To see this we can apply (\ref{ssyz}).  First,  if $|A|$ is even we need the leading coefficient $A_0$ of $A$ to be in $k^{\sbullet 2}$.   Since $DZ^2$ has odd degree we have $A_0^3=Y_0^2$ where $Y_0$ is  the leading coefficient of $Y$,
so $A_0=(\frac{Y_0}{A_0})^2.$ 
  If $|A|$ is odd the leading coefficients of $A^3$ and $DZ^2$  must cancel so again $A_0\in k^{\sbullet 2}$.

Next we show that the class of $Q=(A,B,C)$ has order 1 or 3. 
We have the following identity  (from \cite{Cay1}) that can be verified directly:
\begin{align}\label{ord3}
&(A,-B,C)(buu'+cuv'+cu'v+dvv',auu'+buv'+bu'v+cvv')\\\nonumber&=(A,B,C)(u,v)\times (A,B,C)(u',v'),
\end{align}
 where $A=b^2-ac,B=bc-ad$ and $C=c^2-bd.$
Thus by Proposition \ref{clg2} and ii) of  Proposition \ref{clg} we have  $Q\otimes Q=Q^{-1}$, so that the class of $Q$ has order 1 or 3 in $\Cc_D$. 

The proof of the final statement is also adapted from \cite[$2^{nd}$ letter]{Cay1}, but the argument  required some reworking to make it more understandable.
Suppose that $F'=(a',b',c',d')$ has $-H_{F'}=(A,B,C).$ By (\ref{ord3}) we have
\[
(A,-B,C)(x,y)=(A,-B,C)(x',y')
\]
where
\begin{align}\label{four}
x&=buu'+cuv'+cu'v+dvv'\\\nonumber
y&=auu'+buv'+bu'v+cvv'\\\nonumber
x'&=b'uu'+c'uv'+c'u'v+d'vv'\\\nonumber
y'&=a'uu'+b'uv'+b'u'v+c'vv'.
\end{align}
Also by assumption
\begin{equation}\label{un}
A=b^2-ac=b'^2-a'c'\;\;\;\;B=bc-ad=b'c'-a'd'\;\;\;\;C=c^2-bd=c'^2-b'd'.
\end{equation}
First we  will show that 
\begin{equation}\label{ver}
(x',y')^t=M(x,y)^t \;\;\;\;\text{for}\;\;\;M=\left( \begin{smallmatrix}m& r\\n&s \end{smallmatrix}\right)\in \mathrm{SL}_2(k(t)).
\end{equation}
After verifying this, we will  show that in fact we may take $m,r,n,s\in k[t]$.
Given this,  the uniqueness of $F$ up to sign follows from  Lemma \ref{ls1}, which implies  that $Q$ has only trivial automorphs. 

A calculation shows that if
\begin{equation}\label{ano}
m=\tfrac{bb'-ac'}{A}\;\;\;\;r=\tfrac{-cb'+bc'}{A}\;\;\;\;n=\tfrac{ba'-ab'}{A}\;\;\;\;s=\tfrac{-ca'+bb'}{A}
\end{equation}
then $\det M=1$. Here we use the first formula of (\ref{un}) and that $A\neq 0.$
Another calculation using (\ref{ano}) gives that 
\begin{align*}
&bm+ar=b'\;\;\;\;bn+as=a'\\
&cm+br=c'\;\;\;\;cn+bs=b'.
\end{align*}
To verify (\ref{ver}) it is now enough to show that $dm+cr=d'$ and $dn+cs=c'$.  By (\ref{ano}) and (\ref{un})
\[
d(bb'-cc')+c(-cb'+bc')=-Cb'+c'B=b'^2d'-a'c'd'=d'A,
\]
which gives $dm+cr=d'$. That  $dn+cs=c'$ follows  similarly.

Finally, we must show that $m,r,n,s\in k[t].$ In case $B=0$ we have from (\ref{four}) and  (\ref{ver}) that
\[
cm+br=c',\;\;\;dm+cr=d'\;\;\;\;\text{and}\;\;\;cn+bs=b',\;\;\;dn+cs=c',
\]
hence that
 \begin{equation*}
m=\tfrac{cc'-bd'}{C}\;\;\;\;r=\tfrac{-dc'+cd'}{C}\;\;\;\;n=\tfrac{cb'-bd'}{C}\;\;\;\;s=\tfrac{-db'+cc'}{C}.
\end{equation*}
 Since $\gcd(A,C)=1$, from this and (\ref{ano})  we must have that $m,r,n,s\in k[t].$
 If $B\neq 0$ we argue similarly by giving a third set of fractions from (\ref{four}) and  (\ref{ver}) for $m,r,n,s$, now with denominator $B$,
 starting with
 \[
 bm+ar=b',\;\;\;dm+cr=d'\;\;\text{and}\;\;bn+as=a',\;\;\;dn+cs=c'.
 \]
 
\end{proof}
 Propositions \ref{cayy} and \ref{mw} immediately imply the following result, since the number of classes in $\Cc_D$ of order at most three is finite. 

\begin{prop}\label{ii}
The class number of binary cubics over $k[t]$ with discriminant $4D$ is finite, when $D\in k[t]$ is negative,   square-free and of odd degree.
\end{prop}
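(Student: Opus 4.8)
The plan is to reduce the finiteness statement to the finiteness of a $3$-torsion subgroup of $\Cc_D$, using the Hessian as the bridge. Define a map from the set of $\mathrm{SL}_2(k[t])$-equivalence classes of binary cubic forms $F=(a,b,c,d)$ of discriminant $4D$ to $\Cc_D$ by sending the class of $F$ to the class of $Q=-H_F$. This is well defined on equivalence classes because the Hessian is a covariant, $H_{F|M}=H_F|M$, so $F\sim F'$ implies $-H_F\sim -H_{F'}$; and by Proposition \ref{cayy} the form $-H_F=(A,B,C)$ indeed lies in $\Qc_D$, with the class of $-H_F$ having order $1$ or $3$ in $\Cc_D$. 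Thus the map takes values in the subgroup $\Cc_D[3]$ of elements of order dividing $3$.

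Next I would bound the fibers of this map. Fix a class $C\in\Cc_D[3]$, choose a representative $Q\in C$, and suppose some cubic $F$ of discriminant $4D$ maps into $C$, i.e. $-H_F\sim Q$. Writing $-H_F=Q|M^{-1}$ for a suitable $M\in\mathrm{SL}_2(k[t])$ and replacing $F$ by $F|M$, we may assume $-H_F=Q$ exactly. By the last assertion of Proposition \ref{cayy}, the only cubic forms with Hessian $-Q$ are $F$ and $-F$; hence any cubic whose class maps to $C$ is $\mathrm{SL}_2(k[t])$-equivalent either to $F$ or to $-F$, so the fiber over $C$ contains at most two equivalence classes (and possibly none, if no cubic has Hessian in $C$).

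Finally, Proposition \ref{mw} (via Proposition \ref{the1} and Mordell--Weil) says that $\Cc_D$ is finitely generated, so the subgroup $\Cc_D[3]$ is finite. Combining this with the fiber bound, the number of $\mathrm{SL}_2(k[t])$-equivalence classes of binary cubic forms of discriminant $4D$ is at most $2\,\#\Cc_D[3]<\infty$, which is the assertion. I do not anticipate a genuine obstacle, since all the real content sits in Propositions \ref{cayy} and \ref{mw}; the only point requiring care is the fiber argument, namely absorbing an $\mathrm{SL}_2(k[t])$-transformation to pass from ``$-H_F$ lies in the class $C$'' to ``$-H_F$ equals the fixed representative $Q$,'' and then correctly invoking the $\pm F$ uniqueness to cap the fiber at two.
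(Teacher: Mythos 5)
Your proposal is correct and is essentially the paper's own argument: the paper deduces the proposition "immediately" from Propositions \ref{cayy} and \ref{mw}, using exactly the points you spell out — the Hessian map lands in the (finite, by finite generation) subgroup of classes of order dividing $3$, and the $\pm F$ uniqueness in Proposition \ref{cayy} caps each fiber at two classes. You have simply made explicit the details the paper leaves to the reader, including the step of absorbing an $\mathrm{SL}_2(k[t])$-transformation to normalize $-H_F=Q$.
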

%

\subsection*{Elliptic curves over $k(t)$}

Consider the elliptic curve with $j$-invariant zero over $k(t)$ defined by
\begin{equation}\label{Ed}
\mathcal{E}_D: y^2=x^3+D(t)
\end{equation}
where $D\in k[t]$ is square-free.   
Recall that $D'=-27D(-3t)$, which  is also square-free and that if  $D$ is negative with odd degree so is $D'$.    
By the Mordell-Weil theorem over function fields (see \cite[p.230]{Sil})  the rational points on $\mathcal{E}_D$ and $\mathcal{E}_{D'}$ form finitely generated abelian groups.

We have the pair of dual 3-isogenies $\psi:\mathcal{E}_D\rightarrow \mathcal{E}_{D'}$ and  $\psi':\mathcal{E}_{D'}\rightarrow \mathcal{E}_{D}$ given by
\begin{align}\nonumber
\psi: (x,y)\mapsto& \Big(\big(\tfrac{x^3+4D}{x^2}\big)^\s,\big(\tfrac{y(x^3-8D)}{x^3}\big)^\s\Big)\;\;\;\;\text{and}\\\label{phip}\psi': (x',y')\mapsto& \Big(\big(\tfrac{x'^3+4D'}{9x'^2}\big)^{\s^{-1}},\big(\tfrac{y'(x'^3-8D')}{27x'^3}\big)^{\s^{-1}}\Big),
\end{align}
where $\s:k(t)\rightarrow k(t)$ is the automorphism defined by $t\mapsto -3t.$
Then $\psi'\circ \psi$ and $\psi\circ \psi'$ give the tripling maps on $\mathcal{E}_D$ and $\mathcal{E}_{D'}$, respectively.

Suppose that $D$ is negative and of odd degree. 
Given a (finite) rational point $P=(x,y)$ on $\mathcal{E}_D$ we can write $(x,y)=(\tfrac{X}{V^2},\tfrac{Y}{V^3})$ where $V\in k[t]$ is monic and $\gcd(X,V)=\gcd(Y,V)=1.$
 Taking  $Z=V^3$,  we associate to $P$ the class $C_P$ of any form $Q=-H_F$ that, together with $F$ and $J_F$,  gives rise to $P$ via Proposition \ref{momo}. 
 Note that the class of $Q$ is uniquely determined by (\ref{supp}). Thus by Proposition \ref{cayy}, $C_P$ has order 1 or 3 in $\Cc_D$.  Similarly we associate to any point $P'=(x',y')$ on $\mathcal{E}_{D'}$ the class $C_{P'}$, which has order 1 or 3 in $\Cc_{D'}$.

\begin{lemma}\label{triv}
The class $C_P\in \Cc_D$ has order 1 if and only if $P$ is the image of some $Q'\in \mathcal{E}_{D'}$ under $\psi'.$ The class $C_{P'}\in \Cc_{D'}$ has order 1 if and only if $P'$ is the image of some $Q\in \mathcal{E}_{D}$ under $\psi.$
\end{lemma}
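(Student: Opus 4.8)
The plan is to prove the two equivalences symmetrically — I will carry out the argument for $C_P \in \Cc_D$ and the isogeny $\psi'$, the other case being identical after interchanging the roles of $D$ and $D'$ and using $\psi$ in place of $\psi'$. Recall from Proposition \ref{cayy} that $C_P$ is represented by a form $Q=-H_F=(A,B,C)$ where $F=(Z,b,c,d)$ has discriminant $4D$, and that $(X,Y,Z)$ with $Z=V^3$ satisfies the syzygy $Y^2=X^3+DZ^2$ as in (\ref{ssyz}), with $-H_F=(X,*,*)$ and $\tfrac12 J_F=(Y,*,*,*)$. The order of $C_P$ in $\Cc_D$ is $1$ exactly when $(A,B,C)$ is equivalent to the principal form $(1,0,-D)$.

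First I would handle the ``if'' direction. Suppose $P=\psi'(P')$ for some $P'=(x',y')$ on $\mathcal{E}_{D'}$. Write $(x',y')=(X'/V'^2, Y'/V'^3)$ in lowest terms with $V'$ monic, so $Y'^2 = X'^3 + D'Z'^2$ with $Z'=V'^3$, and let $F'=(Z',b',c',d')$ be the cubic of discriminant $4D'$ supplied by Proposition \ref{momo}, with $-H_{F'}=(X',*,*)$. The key point is that the $3$-isogeny formula (\ref{phip}) for $\psi'$ is, up to the twist $\s^{-1}$ and constants, exactly the classical passage from a cubic form to (a scalar multiple of) its cubic covariant $J$: tripling on $\mathcal{E}_D$ factors through $\psi'\circ\psi$, and on the level of forms tripling sends the class of $Q$ to $Q^{\otimes 3}$, which is principal by the order $1$ or $3$ statement. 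Concretely, I would show that the data $(X,Y,Z)$ attached to $\psi'(P')$ arises, via Proposition \ref{momo}, from the \emph{principal} cubic $F_0=(0,1,0,D)$ of (\ref{Fp}) after an appropriate $\mathrm{SL}_2(k[t])$-change of variables; since $-H_{F_0}=(1,0,-D)$ is the principal quadratic form, the uniqueness clause in Proposition \ref{cayy} (``the only other cubic with Hessian $-Q$ is $-F$'') forces $F=\pm F_0|M$ for some $M$, hence $Q=(1,0,-D)|M$ and $C_P=1$. The main obstacle here is bookkeeping the $\s$-twist and the constant $9$ in (\ref{phip}) through the covariant identities so that the discriminant $4D'$ of $F'$ becomes the discriminant $4D$ of the resulting cubic; I expect this to be a direct but slightly delicate computation using $D'(t)=-27D(-3t)$ and the compatibility $H_{F|M}=H_F|M$, $J_{F|M}=J_F|M$.

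For the ``only if'' direction, suppose $C_P$ has order $1$, i.e. $Q=-H_F$ is $\mathrm{SL}_2(k[t])$-equivalent to $(1,0,-D)=-H_{F_0}$. Then $H_F = H_{F_0}|M$ for some $M\in\mathrm{SL}_2(k[t])$, so $F$ and $F_0|M$ are two cubic forms with the same Hessian; by the uniqueness statement of Proposition \ref{cayy}, $F=\pm F_0|M$. Thus $F$ is, up to sign and $\mathrm{SL}_2(k[t])$, the principal cubic. I would then read off from the explicit syzygy applied to $F_0$ — where $-H_{F_0}=(1,0,-D)$, $\tfrac12 J_{F_0}=(1,0,D,0)$, and $Z$-component of $F_0$ is $0$ — that the triple $(X,Y,Z)$ attached to $P$ is the image under the tripling-type formula of a point that, after undoing the twist $\s$, lies on $\mathcal{E}_{D'}$; explicitly the point $P'=(x',y')$ on $\mathcal{E}_{D'}$ with $\psi'(P')=P$ is manufactured from the coefficients of $F$ by inverting (\ref{phip}). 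In other words, $F$ being equivalent to the principal cubic is precisely the condition that the syzygy point $P$ is ``$3$-divisible across the isogeny,'' i.e. lies in the image of $\psi'$. The step I expect to be the genuine crux — common to both directions — is making the dictionary between (i) $Q=-H_F$ being the principal class, (ii) $F$ being $\mathrm{SL}_2(k[t])$-equivalent to $\pm F_0$, and (iii) $P$ lying in $\psi'(\mathcal{E}_{D'}(k(t)))$ fully rigorous, in particular checking that the coprimality hypotheses $\gcd(X,Z)=\gcd(Y,Z)=1$ needed to invoke Proposition \ref{momo} are preserved when one passes from $P$ to the putative preimage $P'$, and conversely that reducing $\psi'(P')$ to lowest terms does not destroy the identification of the associated form class. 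I would deal with this by working with the monic normalization $V,V'$ of denominators throughout and tracking valuations at each prime $p\mid D$ (resp. $p\mid D'$), exactly as in the proof of Proposition \ref{momo}.
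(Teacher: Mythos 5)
Your proposal follows essentially the same route as the paper: the paper also reduces the lemma to the dictionary between $C_P$ being trivial, $F$ being $\mathrm{SL}_2(k[t])$-equivalent to $\pm F_0$ (via the uniqueness clause of Proposition \ref{cayy}), and $P$ lying in the image of $\psi'$, the last step being carried out by an explicit bijection between solutions of $F_0(x,y)=1$ in $k(t)^2$ and finite points of $\mathcal{E}_{D'}$ together with the identity $\psi'(x',y')=\big(-H_0(x,y),\tfrac12 J_0(x,y)\big)$. The ``delicate computation'' you defer is exactly the computation the paper performs, so your plan is sound and matches the published argument.
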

\begin{proof}
Note that $F(u,v)=V^3$ if and only if $F(\tfrac{u}{V},\tfrac{v}{V})=1.$
Recall the principal form $F_0$ from (\ref{Fp}). It is readily checked that there is a bijection between the set 
\begin{equation}\label{bij}
\{(x,y)\in k(t)^2; F_0(x,y)=3x^2y+Dy^3=1\} 
\end{equation}
and the finite rational points $(x',y')$   on $\mathcal{E}_{D'}$ given by 
 \begin{equation}\label{firi}(x',y')=\big((\tfrac{3}{y})^\s,(\tfrac{9x}{y})^\s\big)\;\;\;\text{and}\;\;\;(x,y)=\big((\tfrac{y'}{3x'})^{\s^{-1}},(\tfrac{3}{x'})^{\s^{-1}}\big).\end{equation}
A computation using (\ref{phip}) and (\ref{Fp}) shows that for $(x,y)$ with $3x^2y+Dy^3=1$ and $(x',y')$ from (\ref{firi}) we have 
\[
\psi'(x',y')=\big(-H_0(x,y),\tfrac{1}{2}J_0(x,y)\big).
\]
The first statement follows since we have from Proposition \ref {cayy} that $-H_F$ is principal if and only if $F$ is principal.

The proof of the second statement is similar, using that for $F_0'=(0,1,0,D')$ there is  a bijection between the set
\[
\{(x',y')\in k(t)^2; F'_0(x',y')=3x'^2y'+D'y'^3=1\} 
\]
and the finite rational points $(x,y)$ on $\mathcal{E}_D$ from (\ref{Ed}) 
given by 
 \[(x,y)=\big((\tfrac{1}{3y'})^{\s^{-1}},(\tfrac{x'}{3y'})^{\s^{-1}}\big)\;\;\;\text{and}\;\;\;(x',y')=\big((\tfrac{y}{x})^{\s},(\tfrac{1}{3x})^{\s}\big).\]
 Now use that
\[
\psi(x,y)=\big(-H'_0(x',y'),\tfrac{1}{2}J'_0(x',y')\big),
\]
where $-H_0'=-H_{F_0'}=(1,0,-D')$ and  $\tfrac{1}{2}J_0'=\tfrac{1}{2}J_{F_0'}=(1,0,D',0)$ and apply Proposition \ref {cayy} as before.
\end{proof}

\subsection*{Proof of Theorem \ref{t2}}
To prove the  first statement, suppose that $P=(x,y)\in \mathcal{E}_D$. Then
\begin{equation*}
2P=(x',y')=\big(\tfrac{9x^4}{4y^2}-2x,\tfrac{8y^4-36x^3y^2+27x^6}{8y^3}\big).
\end{equation*}
Write $(x,y)=(\frac{X}{V^2},\frac{Y}{V^3})$ where $X,Y,V\in k[t]$ with $\gcd(X,V)=\gcd(Y,V)=1.$ Then we have \begin{equation}\label{cap}Y^2=X^3+DV^6\;\;\;\text{and}\end{equation} 
\begin{equation}\label{doub}
V^2x'=\tfrac{9X^4}{4Y^2}-2X\;\;\;\text{and}\;\;\;V^3y'=\tfrac{8Y^4-36X^3Y^2+27X^6}{8Y^3}.
\end{equation}
Since $D$ is square-free we must have $\gcd(X,Y)=1.$ There are two cases to consider.

i) If $|Y|>0$ then the denominator of $x'$ has larger degree than that of $x$ and so the first statement of Theorem \ref{t2} follows.

ii) If $Y\in k^{\sbullet}$ 
then we may write
$(x',y')=(\frac{X'}{V^2},\frac{Y'}{V^3})$ where $X',Y'\in k[t]$. Here $\gcd(X',V)=\gcd(Y',V)=1$, since by  (\ref{cap}) and (\ref{doub}) we have for any prime $P|V$ 
\begin{align}\nonumber&4Y^2X'=X(9X^3-8Y^2)\equiv XY^2\pmod{P}\;\;\;\text{and}\\\label{sec} &8Y^3Y'=8Y^4-36X^3Y^2+27X^6=27D^2V^{12}-18DV^6Y^2-Y^4\equiv -Y^4\pmod{P},\end{align}
where $Y\neq 0$ and $P\nmid X.$ Since $|D|>0$ we have from (\ref{cap}) that $|X|>0$, hence from the first equation of  (\ref{sec}) that $|Y'|> 0$.
Repeat the argument from the beginning  with   $P=(x',y')\in \mathcal{E}_D$ in place of $(x,y)$.  Now i) applies and again the first statement of Theorem \ref{t2} follows.

Now we prove the (contrapositive of the) second statement of Theorem \ref{t2}.  By the  first statement and Mordell-Weil for $\mathcal{E}_D$,  if $\mathcal{E}_D$ has  a finite rational point, it is of infinite order and we may assume that it is not the triple of any other point.
Let $P$ be such a point. 
If the $C_P\in \Cc_D$ that corresponds to $P$ as above Lemma \ref{triv}  is not trivial we are done, since $P$  is then of order 3.  Otherwise, by Lemma \ref{triv}, {$P$ is the image under $\psi'$ of some point $P'\in \mathcal{E}_{D'}$.
In that case the class $C_{P'}$ in $\Cc_{D'}$ corresponding to $P'$ is of order 3, since otherwise $P'=\psi(Q)$ for some $Q\in \mathcal{E}_D$ and then 
$P=\psi'(\psi(Q))=3Q$, and $P$ is a triple. The second statement now follows since $\J_D$ is isomorphic to $\Cc_D$ and $\J_{D'}$ is isomorphic to $\Cc_{D'}$ by Proposition \ref{the1}.
\qed


\begin{thebibliography}{10}
%
\bibitem{Arn} Arndt, F. Zur Theorie der binären kubischen Formen. J. Reine Angew. Math. 53, 309-321 (1857).
\bibitem{Ar} Artin, E. Quadratische Körper im Gebiete der höheren Kongruenzen, I.  Math. Zeit. 19 (1924), 153--206, in Collected Papers.


\bibitem{BS}  Borevich, A. I.; Shafarevich, I. R.
Number theory.
Translated from the Russian by Newcomb Greenleaf
Pure Appl. Math., Vol. 20
Academic Press, New York-London, 1966. 


\bibitem{Can1} Cantor, D. 
Computing in the Jacobian of a hyperelliptic curve. Math. Comp. 48 (1987), no.177, 95–101.

\bibitem{Cant} Cantor, G. 
Zwei Sätze aus der Theorie der binären und quadratischen Formen.  Zeit. Math. Physik, 13, 259-261 (1868),  in Gesammelte Abhandlungen I \#2.


\bibitem{Cay} Cayley, A. Note sur les covariants d'une fonction quadratique, cubique, ou biquadratique à deux ind\'eterminn\'ees, J.  Reine Angew. Math. 50 (1855), 285--287, in Collected Papers Vol. 2. \#135.
\bibitem{Cay1} Cayley, A. Two letters on cubic forms, Quarterly J. I (1857), 85--87, 90-91,  in Collected Papers Vol. 3.  \#162. 


%


\bibitem{Dir2}  Dirichlet, G. L. Vorlesungen \"uber Zahlentheorie, 4th ed., with Supplements
added by R. Dedekind, Vieweg und Sohn, Braunschweig, 1894.

\bibitem{Eis} Eisenstein, G. 
Untersuchungen über die kubischen Formen mit zwei Variabeln.
J. Reine Angew. Math. 27, 89-104 (1844), in Werke, I, p.10.
\bibitem{Fue}  Fueter, R.
Über kubische diophantische Gleichungen. 
Commentarii Math. Helvetici 2, 69-89 (1930).



%
\bibitem{Ga0} Gauss, C. F.
Werke I. G\"ottingen (1870).
\bibitem{Ga} Gauss, C. F.
Disquisitiones arithmeticae.
Translated and with a preface by Arthur A. Clarke. 
Springer-Verlag, New York, 1986. 


 \bibitem{Her} Hermite, C. Lettre \`a Cayley sur les formes cubiques, Quarterly. J. I, (1857), in Oeuvres I p.437--439.


 \bibitem{Hil} Hilbert, D. Uber die Irreducibilitat ganzer rationaler Functionen mit ganzzahligen Coefficienten, J. reine angew. Math. 110 (1892) 104–129, \#18 in Gesammelte Abhandlungen II.
 
 
  \bibitem{Hilb} Hilbert, D.
Theory of algebraic invariants.
Translated from the German and with a preface by R. C. Laubenbacher. Edited and with an introduction by B. Sturmfels
Cambridge University Press, Cambridge, 1993. 
 
  \bibitem{Jac} Jacobi, C.G.J.  Über eine neue Methode zur Integration der hyperelliptischen Differentialgleichungen und über die rationale Form ihrer vollständigen algebraischen Integralgleichungen. J. Reine Angew. Math. 32 (1846), 220–226. \#13 in Werke II.
  
  
  
\bibitem{Kne}  Knebusch, M.
Grothendieck- und Wittringe von nichtausgearteten symmetrischen Bilinearformen. S.-B. Heidelberger Akad. Wiss. Math.-Natur. Kl. 1969/70 (1969/70/1970), 93–157.
  
  \bibitem{Lam}  Lam, T. Y. The Algebraic Theory of Quadratic Forms, Benjamin/Cummings, Reading, MA, 1980. 
%
\bibitem{Lan0}  Lang, S.
Introduction to algebraic geometry.
Interscience Pub.,  New York-London, 1958. 

\bibitem{Lan}  Lang, S.
Fundamentals of Diophantine geometry.
Springer-Verlag, New York, 1983. 
%

%
   \bibitem{Mil} Milnor, J.
Algebraic K-theory and quadratic forms. Invent. Math. 9 (1969/70/1970), 318–344.
 \bibitem{Mil2} Milnor, J. 
Introduction to algebraic K-theory.
Ann. of Math. Stud., No. 72
Princeton University Press, Princeton, NJ; University of Tokyo Press, Tokyo, 1971. 

  \bibitem{MH} Milnor, J.; Husemoller, D.
Symmetric bilinear forms.
Ergeb. Math. Grenzgeb., Band 73, 
Springer-Verlag, New York-Heidelberg, 1973. 

  
  
   \bibitem{Mor0} Mordell, L. J. The Diophantine equation $y^2-k=x^3.$
Proc. Lond. Math. Soc. (2) 13, 60-80 (1914).

   \bibitem{Mor1}  Mordell, L. J.
On some Diophantine equations $y^2=x^3+k$ with no rational solutions. Arch. Math. Naturvid. 49 (1947), no.6, 143–150.

   

   \bibitem{Mor} Mordell, L. J.
Diophantine equations.
Pure Appl. Math., Vol. 30
Academic Press, London-NY, 1969.

   \bibitem{Mum} Mumford, D.
Tata lectures on theta. II.
Jacobian theta functions and differential equations. 
Progr. Math., 43
Birkhäuser Boston, Inc., Boston, MA, 1984. 
   
    \bibitem{New} Newman, M.
Integral matrices.
Pure Appl. Math., Vol. 45
Academic Press, New York, (1972).

  \bibitem{Pla}  Platonov, V.P.  Number-theoretic properties of hyperelliptic fields and the torsion problem in Jacobians of hyperelliptic curves over the rational number field, Uspekhi Mat. Nauk 69 (1(415)) (2014) 3–38. Translation in: Russian Math. Surveys 69 (1) (2014) 1–34. 
%
%
%
\bibitem{RR} Rédei, L.; Reichardt, H.
Die Anzahl der durch vier teilbaren Invarianten der Klassengruppe eines beliebigen quadratischen Zahlkörpers.  J. Reine Angew. Math. 170 (1934), 69–74.
\bibitem{Ros} Rosen, M.
Number theory in function fields.
Grad. Texts in Math., 210
Springer-Verlag, NY, 2002.
\bibitem{Sa}  Salmon, G. Lessons introductory to the modern higher algebra, 5th ed. Chelsea, N.Y. 

\bibitem{Sch} Scholz, A.
Über die Beziehung der Klassenzahlen quadratischer Körper zueinander. J. Reine Angew. Math.166 (1932), 201–203.
\bibitem{Sil} Silverman, J. H.
Advanced topics in the arithmetic of elliptic curves.
Grad. Texts in Math., 151
Springer-Verlag, New York, 1994. 

\bibitem{Ven} Venkov, B. A.
Elementary number theory.
Translated  by Helen Alderson
Wolters-Noordhoff Publishing, Groningen, 1970.

\bibitem{Wei}   Weil, A.  L'arithm\'etique sur une courbe alg\'ebrique. Acta. Math. 52, 1--35 (1928),   in Oeuvres I, 11--45.
\bibitem{Wit}   Witt, E. Theorie der quadratischen Formen in beliebigen K\"orpern, J. Reine Angew. Math. 176 (1937), 31--44, \#1 in Gesammelte Abhandlungen.
 

 \bibitem{Za0} Zannier, U. 
A local-global principle for norms from cyclic extensions of Q(t) (a direct, constructive and quantitative approach). Enseign. Math. (2) 5 (1999), no.3-4, 357–377.

 \bibitem{Za01} Zannier, U. Unlikely intersections and Pell's equations in polynomials, in Trends in Contemporary Mathematics, Springer INdAM Ser., 8, pp. 151–169. Springer, Cham, 2014. 

 \bibitem{Za} Zannier, U.
Hyperelliptic continued fractions and generalized Jacobians.  Amer. J. Math. 141 (2019), no.1, 1–40.






\end{thebibliography}
\end{document}